\newtheorem{thm}{Theorem}
\newtheorem{lem}{Lemma}
\newtheorem{prop}{Proposition}
\newtheorem{cor}[thm]{Corollary}
\theoremstyle{definition}
\newtheorem{rem}{Remark}
\begin{document}
\begin{frontmatter}
\title{\Large Non-asymptotic Bayesian Minimax Adaptation}
\runtitle{Non-asymptotic Bayesian Minimax Adaptation}
\begin{aug}
\author{Keisuke Yano$\,^{1}$\corref{}\ead[label=e1]{yano@mist.i.u-tokyo.ac.jp}}
\and
\author{Fumiyasu Komaki$\,^{1,2}$\ead[label=e2]{komaki@mist.i.u-tokyo.ac.jp}}
\affiliation{The University of Tokyo}
\address{$\,^{1}$Department of Mathematical Informatics,
Graduate School of
Information Science and Technology,
The University of Tokyo,
7-3-1 Hongo, Bunkyo-ku, Tokyo 113-8656, Japan}
\printead{e1,e2}\\
\affiliation{The University of Tokyo}
\address{$\,^{2}$RIKEN Brain Science Institute,
2-1 Hirosawa, Wako City,
Saitama 351-0198, Japan}
\runauthor{K. Yano and F. Komaki}
\end{aug}
\begin{abstract}

This paper studies a Bayesian approach to non-asymptotic minimax adaptation in nonparametric estimation.
Estimating an input function on the basis of output functions in a Gaussian white-noise model is discussed.
The input function is assumed to be in a Sobolev ellipsoid with an unknown smoothness and an unknown radius.
Our purpose in this paper is to present a Bayesian approach
attaining minimaxity up to a universal constant without any knowledge 
regarding the smoothness and the radius.
Our Bayesian approach provides not only a rate-exact minimax adaptive estimator in large sample asymptotics
but also a risk bound for the Bayes estimator quantifying the effects of both the smoothness and the ratio of the squared radius to the noise variance,
where the smoothness and the ratio are the key parameters to describe the minimax risk in this model.
Application to non-parametric regression models is also discussed.


\end{abstract}
\begin{keyword}[class=MSC]
	\kwd[Primary ]{62G05}
	\kwd[; secondary ]{62G20}
\end{keyword}
\begin{keyword}
	\kwd{Adaptive posterior contraction}
	\kwd{Bayesian nonparametrics}
	\kwd{Gaussian infinite sequence model}
	\kwd{Nonparametric regression}
	\kwd{Pinsker's theorem}
\end{keyword}
\end{frontmatter}
\section{Introduction}

Consider estimation of the mean in a Gaussian infinite sequence model.
Let $x=(x_{1},x_{2},\ldots)$ be an observation from $P_{\theta,\varepsilon^{2}}:=\otimes_{i=1}^{\infty}\mathcal{N}(\theta_{i},\varepsilon^{2})$
with an unknown mean $\theta\in l_{2}$  and a known variance $\varepsilon^{2}$.
We assume that $\theta$ is included in a Sobolev ellipsoid
\begin{align}
	\mathcal{E}(\alpha_{0},B):=\bigg{\{}\theta\in l_{2}:\sum_{i=1}^{\infty}i^{2\alpha_{0}}\theta^{2}_{i}\leq B^2\bigg{\}},
\label{def_Sobolev}
\end{align}
where both the smoothness $\alpha_{0}$ and the radius $B$ are unknown.
We measure the performance of an estimator $\hat{\theta}$ of $\theta$
by the normalized mean squared risk 
$R(\theta,\hat{\theta})=
\mathrm{E}_{\theta,\varepsilon^{2}}[||\hat{\theta}(X)-\theta||^{2}]/B^{2}$,
where $\mathrm{E}_{\theta,\varepsilon^{2}}$ is the expectation of $X$ with respect to $P_{\theta,\varepsilon^{2}}$
and
$||v||^{2}:=\sum_{i=1}^{\infty}v_{i}^{2} $ for $v\in l_{2}$.

Estimation in Gaussian infinite sequence models is canonical in the context of nonparametric estimation.
Consider the case in which $\alpha_{0}$ is a positive integer.
With the setting $n=\lfloor 1/\varepsilon^2 \rfloor$,
this estimation is equivalent to
estimation of an input function in a Gaussian white-noise model,
that is, estimating an unknown input function $f$ based on independent and identically distributed (i.i.d.)
output functions $Y_{1}(\cdot),\ldots,Y_{n}(\cdot)$ given by
\begin{align*}
	\mathrm{d}Y_{i}(t)=f(t)\mathrm{d}t+\mathrm{d}W(t), t \in [0,1], \ i=1,\ldots,n,
\end{align*}
where 
$W(\cdot)$ is a standard Brownian motion,
and
$f$ is an $L_{2}[0,1]$ function of which the $L_{2}[0,1]$-norm of the $\alpha_{0}$-th derivative is bounded by $B/\pi^{\alpha_{0}}$.
The correspondence between parameters ($f$ and $\theta$) in Gaussian white-noise and Gaussian infinite sequence models is as follows.
Let $\{\phi_{i}:i=1,\ldots\}$ be the trigonometric series:
\begin{align}
	\begin{split}
	\phi_{1}(t)&:=1,\\
	\phi_{2k}(t)&:=\sqrt{2}\cos(2k\pi t), \, k=1,2,\ldots,\\
	\phi_{2k+1}(t)&:=\sqrt{2}\sin(2k\pi t), \, k=1,2,\ldots.
	\label{eq:trigonometric}
	\end{split}
\end{align}
For $i\in\mathbb{N}$, $\theta_{i}$ corresponds to $\int f(t) \phi_{i}(t)\mathrm{d}t$.
The equivalence of Gaussian white-noise and Gaussian infinite sequence models
can be shown through a sufficiency reduction and through transformation via the trigonometric series;
for the proof including the case in which $\alpha_{0}$ is not an integer,
see Lemma A.3.~in \cite{Tsybakov(2009)}.
Nonparametric regression models are asymptotically equivalent to Gaussian infinite sequence models.
See \cite{BrownandLow(1996)} and Subsection \ref{subsec: application} in this paper.
For comprehensive references,
see \cite{Efromovich(1999),Wasserman(2006),Tsybakov(2009),GineandNickl(2016)}.

The aim of the present paper is 
to develop a Bayesian approach to non-asymptotic minimax adaptation in the Gaussian infinite sequence model.
A non-asymptotically minimax adaptive estimator is defined by an estimator $\hat{\theta}$
for which there exists a positive constant $C$ not depending on $B$ or $\varepsilon$
such that
\begin{align}
\sup_{\theta\in\mathcal{E}(\alpha_{0},B)}R(\theta,\hat{\theta})
\leq C \inf_{\delta}\sup_{\theta\in\mathcal{E}(\alpha_{0},B)} 
R(\theta,\delta) \text{ for any $0<\varepsilon \leq B$}.
	\label{eq:ScaleRatioMinimax}
\end{align}
For this definition of non-asymptotic minimax adaptation,
see p.362 of \cite{BarronBirgeMassart(1999)} and p.212 of \cite{BirgeandMassart(2001)}.
We develop a prior distribution that yields a non-asymptotically minimax adaptive Bayes estimator
and present a risk bound for the Bayes estimator.

Non-asymptotic minimax adaptation is important
since it gives not only the rate of convergence in $\varepsilon$ 
but also a risk bound quantifying the influence of the ratio $\varepsilon/B$ and the smoothness $\alpha_{0}$.
The important point here is 
that only two quantities $\varepsilon/B$ and $\alpha_{0}$ completely determine the difficulty of estimation in the Gaussian infinite sequence model,
that is, the minimax risk over a Sobolev ellipsoid: $ \inf_{\hat{\theta}}  \sup_{\theta\in\mathcal{E}(\alpha_{0},B)}  R  (\theta,\hat{\theta})$.
This is shown by the fact that the minimax risk is invariant whenever the value of $\varepsilon/B$ is unchanged;
For example,
the minimax risk over $\mathcal{E}(\alpha_{0},B)$ with the noise variance $\varepsilon^{2}/100$
is identical to the minimax risk over $\mathcal{E}(\alpha_{0},10B)$ with the noise variance $\varepsilon^{2}$.
In fact,
if we let $\tilde{\theta}=\theta/B$,
$\tilde{\varepsilon}=\varepsilon/B$,
and
$\widetilde{X}=X/B$,
then we have
\begin{align}
	\inf_{\hat{\theta}}\sup_{\theta\in\mathcal{E}(\alpha_{0},B)}R(\theta,\hat{\theta})
	&=\inf_{\hat{\theta}}\sup_{\tilde{\theta}\in\mathcal{E}(\alpha_{0},1)}\mathrm{E}_{\tilde{\theta},\tilde{\varepsilon}^{2}}
	\bigg{[}\sum_{i=1}^{\infty}\{\tilde{\theta}_{i}-\hat{\theta}_{i}(B\widetilde{X})/B\}^{2}\bigg{]}
	\nonumber\\
	&=\inf_{\hat{\theta}} \sup_{\theta\in\mathcal{E}(\alpha_{0},1)} 
	\mathrm{E}_{\theta,\tilde{\varepsilon}^{2}}\left[\sum_{i=1}^{\infty}\{\theta_{i}-\hat{\theta}_{i}(\widetilde{X})\}^{2}\right].
	\label{eq:minimax_invariance}
\end{align}

Developing a Bayesian approach to non-asymptotic minimax adaptation is not straightforward.
Since, if both $B$ and $\alpha_{0}$ are known,
truncation estimator $(1_{1\leq d}X_{1},\ldots, 1_{i\leq d}X_{i},\ldots)$ with $d=(B/\varepsilon)^{1/(2\alpha_{0}+1)}$
attains (\ref{eq:ScaleRatioMinimax}),
one expects that putting a prior distribution on $d$ would attain non-asymptotic minimax adaptation.
However, this idea is not satisfactory as shown in the following.
Consider a prior distribution of the form 
\begin{align*}
\theta \mid D & \sim \bigg{[}\mathop{\otimes}_{i=1}^{D}\mathcal{N}(0,1)\bigg{]}\otimes
	\bigg{[}\mathop{\otimes}_{i=D+1}^{\infty}\mathcal{N}(0,0)\bigg{]}, \\
	D & \sim M,
\end{align*}
where $M$ is a distribution on $\mathbb{N}$.
For the Bayes estimator $\hat{\theta}^{*}$ based on this prior,  we have
\begin{align*}
\sup_{\theta\in\mathcal{E}(\alpha_{0},B)} R(\theta,\hat{\theta}^{*})
\geq \sup_{\theta\in\mathcal{E}(\alpha_{0},B)} \frac{\mathrm{E}_{\theta,\varepsilon^{2}}[(\theta_{1}-\hat{\theta}^{*}_{1}(X))^{2}]}{B^{2}}
\geq  \frac{\varepsilon^4}{(\varepsilon^{2}+1)^{2}} \sup_{\theta\in\mathcal{E}(\alpha_{0},B)}(\theta_{1}/B)^{2},
\end{align*}
where the first inequality follows since the mean squared risk is larger than the coordinate-wise mean squared risk,
and the second inequality follows since $\hat{\theta}_{1}^{*}(X)= \{ \varepsilon^{2} / (\varepsilon^{2}+1) \}  X_{1}$.
The rightmost term in the above inequality is bounded below by $\varepsilon^{4}/ (\varepsilon^{2}+1)$ because $\mathcal{E}(\alpha_{0},B)$ contains $(B,0,0,\ldots)$.
In contrast, 
it follows that the minimax risk goes to 0 as $B$ grows 
since the minimax risk is invariance whenever the value of $B/\varepsilon$ is unchanged
and
since the minimax risk goes to 0 as $\varepsilon$ goes to 0.
Thus, $\hat{\theta}^{*}$ does not attain non-asymptotic minimax adaptation.
Other examples that do not attain non-asymptotic minimax adaptation are presented in Section \ref{sec:Failures}.

We work with a simple prior distribution of the form
\begin{align*}
	\theta \mid (D, K) &\sim \bigg{[}\mathop{\otimes}_{i=1}^{D}\mathcal{N}(0,\varepsilon^{2}D^{2K+1} i^{-(2K+1)} )\bigg{]}\otimes
	\bigg{[}\mathop{\otimes}_{i=D+1}^{\infty}\mathcal{N}(0,0)\bigg{]}, \\
	(D,K) & \sim M \otimes F,
\end{align*}
where $M$ and $F$ are distributions on $\mathbb{N}$.
In Section \ref{sec:nonasymptoticBayesianadaptation},
we show that its Bayes estimator is non-asymptotically adaptive.
The prior is a modification of a sieve prior in the literature; see Subsection \ref{subsec: literature} below.
The modification comes from two principal ideas.
The first idea is to endow $(B/\varepsilon)^2$ with a prior distribution.
Starting from the Gaussian prior distribution
$\otimes_{i=1}^{D}\mathcal{N}(0,Vi^{-2K-1})\otimes\otimes_{i=D+1}^{\infty}\mathcal{N}(0,0)$
given $D$,$V$, and $K$,
we endow $D$, $V$, and $K$ with prior distributions.
Here,
the prior distribution on $V/\varepsilon^2$ corresponds to a prior distribution of $(B/\varepsilon)^2$.
The second idea is to put a prior distribution simultaneously on $D$ and $V$,
focusing on the stochastic behavior 
of the seminorm $\sum_{i=1}^{D}i^{-2K-1}N_{i}^2$ with independent Gaussian random variables $\{N_{i}:i=1,\ldots,D\}$
as shown in Lemma \ref{lem:priormasscondition}.
The second idea also enables us to calculate the posterior distribution easily.


\subsection{Literature review and contributions}
\label{subsec: literature}

There is an extensive literature on asymptotic minimax adaptation in Gaussian infinite sequence models.
Efromoivich and Pinsker \cite{EfromovichandPinsker(1984)} developed an asymptotically minimax adaptive estimator.
Cai, Low, and Zhao \cite{Caietal(2000)} and Cavalier and Tsybakov \cite{CavalierandTsybakov(2001)}
constructed an asymptotically minimax adaptive estimator on the basis of the James--Stein estimator.
There exists a literature from a Bayesian perspective.
Belitser and Ghosal \cite{BelitserandGhosal(2003)} showed that 
putting prior distributions on the hyperparameter $\alpha$ in the Gaussian distribution $\mathrm{G}(\cdot \mid \alpha)$ 
leads to asymptotic minimax adaptation
in the case in which the smoothness is included in a discrete set.
Scricciolo \cite{Scricciolo(2006)} obtained the corresponding result in Bayesian nonparametric density estimation.
Huang \cite{Huang(2004)} removed the assumption on $\alpha_{0}$, incurring the need to pay the price that
a logarithmic factor is added to the rate.
See also 
\cite{Arbeletal(2013),GhosalLembervanderVaart(2008),KnapikvanderVaartvanZanten(2011),Ray(2013)}.
Recently,
the results on rate-exact Bayesian minimax adaptation without any assumption on $\alpha_{0}$
are elegantly established by
\cite{GaoandZhou(2016),HoffmannRousseauSchmidt-Hieber(2015),Johannes_Simoni_Schenk(2016),KnapikSzabovanderVaartvanZanten(2016)}.
However, 
non-asymptotic minimax adaptation implies asymptotic minimax adaptation,
whereas the converse does not hold as shown in Section \ref{sec:Failures}.
To achieve non-asymptotic Bayesian minimax adaptation, further consideration is required in constructing a prior distribution.

Non-asymptotic minimax adaptation has been studied from the viewpoints of model selection and frequentist model averaging.
In nonparametric density estimation,
Birg\'{e} and Massart \cite{BirgeandMassart(1997)} showed that
the estimator based on an analogue of Mallows' $C_{p}$ \cite{Mallows(1973)}
(equivalently, the Akaike Information Criterion (AIC) \cite{Akaike(1973)} and Stein's Unbiased Risk Estimator (SURE) \cite{Stein(1973)}) attains 
non-asymptotic minimax adaptation.
For the corresponding results in nonparametric regression models and in Gaussian infinite sequence models,
see \cite{Baraud(2000),BirgeandMassart(2001)}.
See also \cite{BarronBirgeMassart(1999)} for more general results.
Non-asymptotic minimax adaptation is also attained by frequentist model averaging.
A slight modification of the arguments in 
\cite{DalalyanandSalmon(2012),LeungandBarron(2006)}
shows that
the frequentist model averaging estimator using Mallows' $C_{p}$
has non-asymptotic minimax adaptation as discussed in Section \ref{sec:Failures}.
Yet, the question whether there exists a fully Bayesian approach attaining non-asymptotic minimax adaptation has remained unresolved.
Although there is a connection between the frequentist model averaging estimator and Bayesian model averaging
as discussed in \cite{Hartigan(2002)} (see also the appendix in \cite{LeungandBarron(2006)}),
posterior distributions of $\theta$ are not available in the frequentist model averaging approach.
For this reason, we distinguish frequentist model averaging from Bayesian model averaging.

The prior distribution that we use is a modification of a sieve prior discussed in 
\cite{Arbeletal(2013),Ray(2013),Zhao(2000),ShenandGhosal(2015),ShenandWasserman(2001)}.
Originally, the sieve prior was introduced by Zhao \cite{Zhao(2000)} to resolve some Bayesian nonparametric problems regarding prior masses on the parameter space.
Arbel et al.~\cite{Arbeletal(2013)} showed that the Bayes estimator based on the sieve prior is asymptotically minimax adaptive 
up to a logarithmic factor under the asymptotics as 
$\varepsilon\to 0$.
A practical advantage of sieve priors in Gaussian sequence models is
that an exact sampling from the posterior distributions is performed by a simple acceptance-rejection method.
However, the Bayes estimator based on the original sieve prior in \cite{Zhao(2000)} 
fails non-asymptotic minimax adaptation as discussed in Section \ref{sec:Failures}.
Thus, a non-trivial refinement is necessary.

\subsection{Organization}

The remainder of the paper is organized as follows.
In Section \ref{sec:Failures},
we review several existing estimators from the viewpoint of non-asymptotic adaptation.
In Section \ref{sec:nonasymptoticBayesianadaptation},
a non-asymptotically adaptive Bayes estimator is proposed.
This is the principal part of this study.
Section \ref{sec:numerical} presents numerical experiments.
Numerical experiments report that our Bayesian approach is possibly better than the model selection based estimator
and is comparable to the model averaging estimator.
Section \ref{sec:details_ss} provides proofs of theorems in Section \ref{sec:nonasymptoticBayesianadaptation}.
Some proofs of lemmas and supplementary numerical experiments are provided in appendices.

\section{Non-asymptotic adaptation and existing estimators}
\label{sec:Failures}

In this section,
we review existing estimators from the perspective of non-asymptotic adaptation
with the aim of assisting the reader in understanding non-asymptotic adaptation.

Let us mention a necessary condition for non-asymptotic minimax adaptation ahead.
It is a necessary condition for non-asymptotic minimax adaptation
that the rate of convergence of the minimax risk of an estimator with respect to $\varepsilon/B$
is $(\varepsilon/B)^{4\alpha_{0}/(2\alpha_{0}+1)}$.
In particular,
the rate of convergence of a non-asymptotically minimax adaptive estimator with respect to $1/B$ is $B^{-4\alpha_{0}/(2\alpha_{0}+1)}$.
This is because for each $\alpha_{0}>0$, the asymptotic equality
\begin{align}
	\lim_{B/\varepsilon\to\infty}
	\Big{[}\inf_{\hat{\theta}}\sup_{\theta\in\mathcal{E}(\alpha_{0},B)}
		R(\theta,\hat{\theta})
	\Big{/}  (\varepsilon /B)^{4\alpha_{0}/(2\alpha_{0}+1)}\Big{]}
	= c_{\mathrm{P}}(\alpha_{0})
	\label{eq:PinskerMinimax_scaleratio}
\end{align}
follows from (\ref{eq:minimax_invariance}) and from Pinsker's theorem \cite{Pinsker(1980)} with $B=1$,
where
$c_{\mathrm{P}}(\alpha_{0}):= (2\alpha_{0}+1)^{1/(2\alpha_{0}+1)}\{\alpha_{0}/(\alpha_{0}+1)\}^{4\alpha_{0}/(2\alpha_{0}+1)}$.
Pinsker's theorem states that we have, for each $\alpha_{0}>0$ and each $B>0$,
\begin{align}
	\lim_{\varepsilon\to 0}\Big{[}
\inf_{\hat{\theta}}\sup_{\theta\in\mathcal{E}(\alpha_{0},B)} R(\theta,\hat{\theta})
	\Big{/}
( \varepsilon / B )^{4\alpha_{0}/(2\alpha_{0}+1)}\Big{]}=c_{\mathrm{P}}(\alpha_{0}).
\label{eq:PinskerMinimax}
\end{align}

\subsection{Asymptotic and non-asymptotic minimax adaptation}

Non-asymptotic minimax adaptation implies asymptotic minimax adaptation by definition, whereas the converse does not hold.
Even when $\alpha_{0}$ is known, asymptotic minimaxity in small-$\varepsilon$ asymptotics does not necessarily imply (\ref{eq:ScaleRatioMinimax}).

First, consider the Bayes estimator $\hat{\theta}_{\mathrm{G}(\cdot\mid\alpha)}$
based on the Gaussian distribution
\begin{align*}
\mathrm{G} ( \cdot \mid \alpha )  :=  \otimes_{i=1}^{\infty}  \mathcal{N}  (  0  ,  i^{-2\alpha-1}  ), \ \alpha>0.
\end{align*}
This estimator is shown to achieve asymptotic minimaxity as $\varepsilon\to0$:
\begin{align*}
\lim_{\varepsilon\to 0}
\Big{[}
	\sup_{\theta\in\mathcal{E}(\alpha_{0},B)} R(\theta,\hat{\theta}_{\mathrm{G}(\cdot\mid\alpha_{0})})
\Big{/}
\varepsilon^{4\alpha_{0}/(2\alpha_{0}+1)}
\Big{]}
<\infty;
\end{align*}
see \cite{Freedman(1999),Zhao(2000)}.
Using the necessary condition that we mentioned above,
we show that this estimator does not attain non-asymptotic minimax adaptation.
From the necessary condition
it suffices to show that $\sup_{\theta\in\mathcal{R}}(\alpha_{0},B)$ is bounded below by a positive constant not depending on $B$.
Let $\bar{\theta}$ be an $l_{2}$-vector of which the $i$-th coordinate is $B$ if $i=1$ and $0$ otherwise.
Then, the supremum of $R(\theta,\hat{\theta}_{\mathrm{G}})$ over $\mathcal{E}(\alpha_{0},B)$
is given by
\begin{align*}
	\sup_{\theta\in\mathcal{E}(\alpha_{0},B)}
	R(\theta,\hat{\theta}_{\mathrm{G}})
	\geq R(\bar{\theta},\hat{\theta}_{\mathrm{G}})
	\geq \varepsilon^{4}  / (1+\varepsilon^{2})^2, \ \varepsilon>0,
\end{align*}
and thus the proof is completed.

Second, consider the sieve prior introduced by \cite{Zhao(2000)}:
\begin{align*}
	\mathrm{C}_{M}(\cdot\mid \alpha)
	:=\sum_{d=1}^{\infty}M(d)\bigg{[}
	\mathop{\otimes}_{i=1}^{d}\mathcal{N}(0,i^{-2\alpha-1})\bigg{]}
	\otimes
	\bigg{[}\mathop{\otimes}_{i=d+1}^{\infty}\mathcal{N}(0,0)\bigg{]},
\end{align*}
where $M$ is a probability distribution on $\mathbb{N}$ with $M(d)>0$ for any $d\in\mathbb{N}$.
The Bayes estimator with $\alpha=\alpha_{0}$ is shown to be asymptotically minimax,
and
the Bayes estimator with $\alpha=1/2$ is shown to achieve asymptotic minimax adaptation (up to a logarithmic factor) in a range of the smoothness;
see Theorem 6.1 in \cite{Zhao(2000)} and Proposition 2 in \cite{Arbeletal(2013)}.
We show that the Bayes estimator based on the sieve prior with any $\alpha>0$ is not non-asymptotically minimax adaptive.
Since the first component of $\hat{\theta}_{\mathrm{C}_{M}(\cdot\mid\alpha)}$ for any $\alpha>0$ and any $M$ 
is given by $\{1/(1+\varepsilon^{2})\} X_{1}$,
it follows from the same calculation as that of $\hat{\theta}_{\mathrm{G}(\cdot\mid\alpha)}$ that we have
\begin{align*}
\sup_{\theta\in\mathcal{E}(\alpha_{0},B)} R (\theta , \hat{\theta}_{\mathrm{C}_{M}(\cdot\mid\alpha)})
\geq \varepsilon^{4}  / (1+\varepsilon^{2})^2, \ \varepsilon>0,
\end{align*}
which shows that $\hat{\theta}_{\mathrm{C}_{M}(\cdot\mid\alpha)}$ does not attain non-asymptotic minimax adaptation.

Finally, consider the blockwise James--Stein estimator.
The blockwise James--Stein estimator is shown to achieve asymptotic minimax adaptation;
see \cite{Caietal(2000),CavalierandTsybakov(2001)}.
The construction of the blockwise James--Stein estimator is rather technical and is not presented here.
Note that the blockwise James--Stein estimator is a truncation estimator with $d=\lfloor 1/\varepsilon^{2}\rfloor$,
where
a truncation estimator with dimension $d$ is the $d$-dimensional truncation of a estimator in $l_{2}$.
Any truncation estimator $\hat{\theta}_{(d)}$ with $d$ (possibly depending on $\varepsilon$) does not attain non-asymptotic minimax adaptation;
the proof of this property follows the same line as those of $\hat{\theta}_{\mathrm{G}(\cdot\mid\alpha)}$ and $\hat{\theta}_{\mathrm{C}_{M}(\cdot\mid\alpha)}$
since
the supremum of $R(\theta,\hat{\theta}_{(d)})$ over $\mathcal{E}(\alpha_{0},B)$
is given by
\begin{align*}
	\sup_{\theta\in\mathcal{E}(\alpha_{0},B)}
	R(\theta,\hat{\theta}_{(d)})
	\geq
	\sup_{\theta\in\mathcal{E}(\alpha_{0},B)}
	\sum_{i=d+1}^{\infty}\theta^{2}_{i}/B^2
	\geq (d +1 )^{-2\alpha_{0}}, \ \varepsilon>0.
\end{align*}

The above results are summarized in the following proposition.
\begin{prop}\label{prop: not scale ratio 1}
	The following three hold:
	(i) the Bayes estimator $\hat{\theta}_{\mathrm{G}(\cdot\mid\alpha_{0})}$ does not satisfy (\ref{eq:ScaleRatioMinimax});
	(ii) for any $\alpha>0$, the Bayes estimator $\hat{\theta}_{\mathrm{C}_{M}(\cdot\mid\alpha)}$  does not satisfy (\ref{eq:ScaleRatioMinimax});
	(iii) the blockwise James--Stein estimator does not satisfy (\ref{eq:ScaleRatioMinimax}).
\end{prop}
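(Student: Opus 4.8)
The plan is to establish all three parts by the same route: for each estimator exhibit a constant $g(\varepsilon)>0$, not depending on $B$, with $\sup_{\theta\in\mathcal{E}(\alpha_{0},B)}R(\theta,\hat{\theta})\ge g(\varepsilon)$, and then invoke the necessary condition recorded above. Indeed, by (\ref{eq:minimax_invariance}) the minimax risk over $\mathcal{E}(\alpha_{0},B)$ with noise $\varepsilon$ equals the minimax risk over $\mathcal{E}(\alpha_{0},1)$ with noise $\varepsilon/B$, which by Pinsker's theorem (\ref{eq:PinskerMinimax}) tends to $0$ as $B\to\infty$ with $\varepsilon$ fixed. Hence if the worst-case risk of $\hat{\theta}$ stays bounded below by $g(\varepsilon)>0$ while the minimax risk vanishes, no finite $C$ independent of $B$ can satisfy (\ref{eq:ScaleRatioMinimax}) for all $0<\varepsilon\le B$ (the regime $\varepsilon$ fixed, $B\to\infty$ being admissible); equivalently, the worst-case risk of such a $\hat{\theta}$ fails to decay at the Pinsker rate $(\varepsilon/B)^{4\alpha_{0}/(2\alpha_{0}+1)}$ demanded by (\ref{eq:PinskerMinimax_scaleratio}).

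For (i) and (ii) I would first read off the first coordinate of the Bayes estimator. Under $\mathrm{G}(\cdot\mid\alpha_{0})$ the first-coordinate prior is $\mathcal{N}(0,1^{-2\alpha_{0}-1})=\mathcal{N}(0,1)$, so conjugacy gives $\hat{\theta}_{1}=\{1/(1+\varepsilon^{2})\}X_{1}$. Under $\mathrm{C}_{M}(\cdot\mid\alpha)$ the same formula holds for every $\alpha>0$ and every $M$: conditionally on the sieve dimension $d$ the coordinates are independent, and since $d\ge1$ always and $1^{-2\alpha-1}=1$, the conditional posterior mean of $\theta_{1}$ equals $\{1/(1+\varepsilon^{2})\}X_{1}$ regardless of $d$, so averaging over the posterior of $d$ leaves it unchanged. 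I would then evaluate the risk at $\bar{\theta}=(B,0,0,\dots)$, which lies in $\mathcal{E}(\alpha_{0},B)$ because $1^{2\alpha_{0}}B^{2}=B^{2}$. Bounding the full risk below by its first coordinate and computing the mean squared error of $\{1/(1+\varepsilon^{2})\}X_{1}$ when $X_{1}\sim\mathcal{N}(B,\varepsilon^{2})$, which equals $(\varepsilon^{2}+B^{2}\varepsilon^{4})/(1+\varepsilon^{2})^{2}$, and dividing by $B^{2}$ yields $\sup_{\theta\in\mathcal{E}(\alpha_{0},B)}R(\theta,\hat{\theta})\ge\varepsilon^{4}/(1+\varepsilon^{2})^{2}=:g(\varepsilon)$, which is free of $B$.

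For (iii) I would use that the blockwise James--Stein estimator is a truncation estimator with dimension $d=\lfloor1/\varepsilon^{2}\rfloor$, so all coordinates with index $>d$ are identically zero and hence $R(\theta,\hat{\theta})\ge\sum_{i>d}\theta_{i}^{2}/B^{2}$ for every $\theta$. Maximizing the right-hand side over $\mathcal{E}(\alpha_{0},B)$ by putting all the mass on coordinate $d+1$ gives $\sup_{\theta\in\mathcal{E}(\alpha_{0},B)}R(\theta,\hat{\theta})\ge(d+1)^{-2\alpha_{0}}\ge\{\varepsilon^{2}/(1+\varepsilon^{2})\}^{2\alpha_{0}}$, again independent of $B$. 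The same computation in fact rules out every truncation estimator, with the dimension allowed to depend on $\varepsilon$.

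The argument is elementary, and the only step that needs genuine care is the verification in (ii) that the first-coordinate posterior mean of the sieve prior is $\{1/(1+\varepsilon^{2})\}X_{1}$ for \emph{every} choice of $\alpha$ and $M$; this hinges on the fact that the first coordinate is always inside the sieve with prior variance exactly $1$, so the mixing over $d$ is irrelevant for that coordinate. Once the three $B$-free positive lower bounds are in place, the conclusion in each case is immediate from the necessary condition, and no separate estimate of the minimax risk itself is required.
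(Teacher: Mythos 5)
Your proposal is correct and follows essentially the same route as the paper: the $B$-free lower bound $\varepsilon^{4}/(1+\varepsilon^{2})^{2}$ obtained from the first coordinate at $\bar{\theta}=(B,0,0,\ldots)$ for (i) and (ii), the truncation lower bound $(d+1)^{-2\alpha_{0}}$ for (iii), and the comparison with the vanishing minimax risk via (\ref{eq:minimax_invariance}) and Pinsker's theorem. The only difference is that you spell out the conjugacy computation and the mixing-over-$d$ argument slightly more explicitly than the paper does.
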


\subsection{Model selection and model averaging}

We explain that the model selection and model averaging estimators are non-asymptotically adaptive.
For $d \in \mathbb{N}$,
let $\hat{r}_{d}:= -\sum_{i=1}^{d}X_{i}^{2} + 2\varepsilon^2 d$.
Let $\hat{\theta}_{\mathrm{MS}}$ be an estimator of which the $i$-th component is given by $X_{i}1_{i\leq \hat{d}}$,
where 
$\hat{d} \in \mathrm{argmin}_{d\in \mathbb{N}} \hat{r}_{d}$.
Let $\hat{\theta}_{\mathrm{MA},\beta}$ be an estimator 
of which the $i$-th component is given by $\sum_{d=1}^{\infty} w_{d} X_{i}1_{i \leq d}$,
where $w_{d}\propto\exp\{- \beta \hat{r}_{d}/ (2\varepsilon^2) \}$ for $d\in \mathbb{N}$
and $\sum_{d=1}^{\infty}w_{d}=1$.
For simplicity, we assume that $\beta \leq 1/2$.
\begin{prop}[Theorem 1 in \cite{BirgeandMassart(2001)} and Section 7 in \cite{LeungandBarron(2006)}]
\label{prop:model selection and model averaging}
There exist positive constants $C_{1}$ and $C_{2}$ for which the inequalities
\begin{align*}
	&\sup_{\theta \in \mathcal{E}(\alpha_{0},B)} R(\theta , \hat{\theta}_{\mathrm{MS}}) 
	\leq C_{1} (\varepsilon/B)^{4\alpha_{0}/(2\alpha_{0}+1)}, \\
	&\sup_{\theta \in \mathcal{E}(\alpha_{0},B)} R(\theta , \hat{\theta}_{\mathrm{MA},\beta}) 
	\leq C_{2} (\varepsilon/B)^{4\alpha_{0}/(2\alpha_{0}+1)}
\end{align*}
hold,
provided that $\varepsilon/B$ is smaller than one.
Here, $C_{1}$ is a universal constant and $C_{2}$ depends only on $\beta$.
\end{prop}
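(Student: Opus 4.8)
The plan is to derive both bounds from oracle inequalities comparing $\hat\theta_{\mathrm{MS}}$ and $\hat\theta_{\mathrm{MA},\beta}$ to the family of projection estimators $\{\hat\theta_{(d)}:d\in\mathbb N\}$ — these inequalities are Theorem~1 of \cite{BirgeandMassart(2001)} and the aggregation bound of Section~7 of \cite{LeungandBarron(2006)}, respectively — and then to bound the projection oracle over a Sobolev ellipsoid by an elementary computation. First, by the scaling identity \eqref{eq:minimax_invariance} we reduce to $B=1$: both estimators are equivariant under $(X,\varepsilon)\mapsto(X/B,\varepsilon/B)$, since $\hat r_{d}$ is then multiplied by the $d$-free factor $1/B^{2}$, leaving $\hat d$ and the weights $(w_{d})$ unchanged, so that $\hat\theta(X)/B=\hat\theta(X/B)$ under noise level $\varepsilon/B$. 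It therefore suffices to show that $\sup_{\theta\in\mathcal E(\alpha_{0},1)}\mathrm E_{\theta,\varepsilon^{2}}\|\hat\theta-\theta\|^{2}$ is at most a constant multiple of $\varepsilon^{4\alpha_{0}/(2\alpha_{0}+1)}$ for every $\varepsilon\in(0,1]$.

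For $\hat\theta_{\mathrm{MS}}$, the key observation is that $\hat r_{d}$ plays the role of a Mallows $C_{p}$ / SURE criterion for the projection estimator $\hat\theta_{(d)}$: one has $\mathrm E_{\theta,\varepsilon^{2}}[\hat r_{d}]=\varepsilon^{2}d-\sum_{i\le d}\theta_{i}^{2}=\mathrm E_{\theta,\varepsilon^{2}}\|\hat\theta_{(d)}-\theta\|^{2}-\|\theta\|^{2}$, so $\hat d$ minimizes an unbiased estimate of the projection risk up to the $d$-free shift $\|\theta\|^{2}$. Decomposing $\|\hat\theta_{\mathrm{MS}}-\theta\|^{2}=\sum_{i\le\hat d}(X_{i}-\theta_{i})^{2}+\sum_{i>\hat d}\theta_{i}^{2}$, using $\hat r_{\hat d}\le\hat r_{d}$ for every $d$, and controlling the resulting Gaussian term $\sum_{i\le d}\theta_{i}(X_{i}-\theta_{i})$ and centred chi-square term $\sum_{i\le d}\{(X_{i}-\theta_{i})^{2}-\varepsilon^{2}\}$ uniformly in $d$ (by Cauchy--Schwarz and a Laurent--Massart deviation bound, together with the fact that $\hat d$ is bounded with overwhelming probability), one obtains, following \cite{BirgeandMassart(2001)},
\begin{align*}
\mathrm E_{\theta,\varepsilon^{2}}\|\hat\theta_{\mathrm{MS}}-\theta\|^{2}\le C\Bigl(\inf_{d\in\mathbb N}\Bigl[\sum_{i>d}\theta_{i}^{2}+\varepsilon^{2}d\Bigr]+\varepsilon^{2}\Bigr)
\end{align*}
with a universal constant $C$. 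Over $\mathcal E(\alpha_{0},1)$ one has $\sum_{i>d}\theta_{i}^{2}\le(d+1)^{-2\alpha_{0}}\sum_{i>d}i^{2\alpha_{0}}\theta_{i}^{2}\le(d+1)^{-2\alpha_{0}}$, so with $d=\lceil\varepsilon^{-2/(2\alpha_{0}+1)}\rceil$ the infimum is at most $c(\alpha_{0})\varepsilon^{4\alpha_{0}/(2\alpha_{0}+1)}$; since $4\alpha_{0}/(2\alpha_{0}+1)<2$ the additive $\varepsilon^{2}$ is also $\le\varepsilon^{4\alpha_{0}/(2\alpha_{0}+1)}$ for $\varepsilon\le1$, and because $c(\alpha_{0})$ stays bounded as $\alpha_{0}$ ranges over $(0,\infty)$ the constant $C_{1}$ can be chosen independent of $\alpha_{0}$. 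Undoing the rescaling gives the first bound.

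For $\hat\theta_{\mathrm{MA},\beta}$ the scheme is the same once the analogous oracle inequality is available. The weights are well defined because $\hat r_{d}=2\varepsilon^{2}d-\sum_{i\le d}X_{i}^{2}\to+\infty$ almost surely (as $\sum_{i\le d}\theta_{i}^{2}$ converges while $\sum_{i\le d}(X_{i}-\theta_{i})^{2}/d\to\varepsilon^{2}$), so $\sum_{d}\exp\{-\beta\hat r_{d}/(2\varepsilon^{2})\}<\infty$. A slight modification of the argument of \cite{LeungandBarron(2006)} shows that for $\beta\le1/2$ the exponentially weighted aggregate satisfies an exact oracle inequality
\begin{align*}
\mathrm E_{\theta,\varepsilon^{2}}\|\hat\theta_{\mathrm{MA},\beta}-\theta\|^{2}\le\inf_{d\in\mathbb N}\Bigl[\sum_{i>d}\theta_{i}^{2}+\varepsilon^{2}d+\frac{2\varepsilon^{2}}{\beta}\log\frac{1}{\pi_{d}}\Bigr]
\end{align*}
for any reference distribution $\pi$ on $\mathbb N$; the hypothesis $\beta\le1/2$ is exactly what makes the associated Stein-type quadratic form positive semidefinite, so that the correction carries the right sign. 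Taking $\pi$ with $\pi_{d}\asymp d^{-2}$ turns the extra term into $O(\varepsilon^{2}\log(1/\varepsilon))$, negligible next to $\varepsilon^{2}d$ at the optimal $d\asymp\varepsilon^{-2/(2\alpha_{0}+1)}$, and the same ellipsoid computation then gives $\sup_{\theta\in\mathcal E(\alpha_{0},1)}\mathrm E_{\theta,\varepsilon^{2}}\|\hat\theta_{\mathrm{MA},\beta}-\theta\|^{2}\le C_{2}\varepsilon^{4\alpha_{0}/(2\alpha_{0}+1)}$ with $C_{2}$ depending only on $\beta$; rescaling completes the proof. The main obstacle is the model-averaging oracle inequality, specifically verifying that the Stein correction retains the favourable sign throughout under $\beta\le1/2$ while handling the infinite, data-dependent family of weights; in the model-selection part the only delicate step is the uniform-in-$d$ control of the chi-square fluctuations, which is eased by the boundedness of $\hat d$.
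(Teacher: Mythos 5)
The model-selection half of your argument is fine and is exactly what the paper does: it invokes Theorem 1 of Birg\'e--Massart as a black box and finishes with the standard ellipsoid computation $\sum_{i>d}\theta_i^2\le (d+1)^{-2\alpha_0}B^2$ at $d\asymp(B/\varepsilon)^{2/(2\alpha_0+1)}$; your observation that the resulting constant is uniform in $\alpha_0$ and your scale-equivariance reduction to $B=1$ are both correct.

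The model-averaging half has a genuine gap, and it is not the one you flag. The oracle inequality you write down,
\begin{align*}
\mathrm E_{\theta,\varepsilon^{2}}\|\hat\theta_{\mathrm{MA},\beta}-\theta\|^{2}\le\inf_{d\in\mathbb N}\Bigl[\sum_{i>d}\theta_{i}^{2}+\varepsilon^{2}d+\tfrac{2\varepsilon^{2}}{\beta}\log\tfrac{1}{\pi_{d}}\Bigr]
\quad\text{``for any reference distribution }\pi\text{''},
\end{align*}
is not available for the estimator as defined. In the Leung--Barron bound the prior appearing in the penalty must be the prior appearing in the weights ($w_d\propto\pi_d\exp\{-\beta\hat r_d/(2\varepsilon^2)\}$), because the $\log(1/\pi_d)$ term comes from the Gibbs variational identity relative to that same $\pi$. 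Here $w_d\propto\exp\{-\beta\hat r_d/(2\varepsilon^2)\}$ with a flat (improper) weighting over all of $\mathbb N$, so you cannot simply insert $\pi_d\asymp d^{-2}$ into the penalty; doing so proves the bound for a different estimator. Moreover Leung--Barron's analysis is for finitely many candidates and a finite-dimensional loss, whereas here the loss is the full $\ell_2$ loss on the infinite sequence and $\hat\theta_{\mathrm{MA},\beta,i}=\bigl(\sum_{d\ge i}w_d\bigr)X_i$ is nonzero for every $i$; the SURE/unbiasedness identity behind the "favourable sign" of the Stein correction also needs to be set up on finitely many coordinates. The paper's proof is devoted precisely to these two points: it splits the loss at $D=\lfloor(B/\varepsilon)^{2/(2\alpha_0+1)}\rfloor$, shows $\mathrm E\sum_{i>D}(\theta_i-\hat\theta_{\mathrm{MA},\beta,i})^2=O(D\varepsilon^2)$ by bounding $\mathrm E\bigl(\sum_{d\ge i}w_d\bigr)^4$ for $i\ge 25D$ via a Gaussian concentration (Borell--Sudakov--Tsirelson) inequality, and on the first $D$ coordinates renormalizes the weights to $d\le D$ so that the uniform prior on $\{1,\dots,D\}$ yields the legitimate penalty $(2\varepsilon^2/\beta)\log D$. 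Your proposal contains neither the tail control nor the truncation of the model index, and the $\beta\le 1/2$ sign issue you identify as the main obstacle goes through exactly as in Leung--Barron; the real work is the passage from the finite to the countable family and from the truncated to the full $\ell_2$ loss.
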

The proof for the model selection-based estimator follows immediately from Theorem 1 in \cite{BirgeandMassart(2001)}; see also \cite{Yang(2005)}
The proof for the model averaging-based estimator is given in Appendix \ref{Appendix:Proof for Section Failures} for the sake of completeness.

\section{Non-asymptotic Bayesian adaptation}
\label{sec:nonasymptoticBayesianadaptation}

As discussed in the introduction,
we work with
the prior distribution
\begin{align}
	\Pi:=\sum_{k=1}^{\infty}F(k)\mathrm{S}_{M}(\cdot\mid\alpha=k),
\end{align}
where
\begin{align}
	\mathrm{S}_{M}(\cdot \mid \alpha ):=\mathop{\sum}_{d=1}^{\infty} M(d) \mathrm{S}( \cdot \mid d , \alpha)
\end{align}
and
$\mathrm{S}(\cdot\mid d, \alpha)$ is the distribution on $l_{2}$ given by
\begin{align}
	\mathrm{S}(\cdot\mid d,\alpha):=
	\bigg{[}\mathop{\otimes}_{i=1}^{d}\mathcal{N}(0,\varepsilon^{2}d^{2\alpha+1} i^{-(2\alpha+1)} )\bigg{]}\otimes
	\bigg{[}\mathop{\otimes}_{i=d+1}^{\infty}\mathcal{N}(0,0)\bigg{]}.
\end{align}
In the present paper,
$M$ is assumed to be of the form $M(d) \propto \exp (-\eta d)$ with $\eta>0$,
and
$F$ is assumed to be of the form $F(d)\propto \exp (-\gamma d)$ with $\gamma>0$.

\subsection{Principal results}

Theorem \ref{thm:AdaptivePosteriorConcentrationwrtB} 
presents non-asymptotic adaptive
posterior contraction of $\Pi$
and
Corollary \ref{cor:Adaptiveminimaxrate_wrtB} demonstrates 
non-asymptotic adaptation of the Bayes estimator of $\Pi$.
\begin{thm}\label{thm:AdaptivePosteriorConcentrationwrtB}
There exist positive constants $C$ and $c$ depending only on $\alpha_{0}$, $\eta$ of $M$ , and $\gamma$ of $F$
for which
the inequality
\begin{align*}
	\mathrm{E}_{\theta_{0},\varepsilon^{2}}
	\Pi( \|\theta-\theta_{0}\|^2 / B^2 \geq C (\varepsilon/B)^{4\alpha_{0}/(2\alpha_{0}+1)} \mid X )
	\leq
	\exp\{-c(B/\varepsilon)^{2/(2\alpha_{0}+1)}\}
\end{align*}
holds uniformly in $\theta_{0}\in \mathcal{E}(\alpha_{0},B)$
provided that $\varepsilon/B$ is smaller than one.
\end{thm}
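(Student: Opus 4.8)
The plan is to establish the contraction bound via the standard testing-and-prior-mass strategy of Ghosal--Ghosh--van der Vaart, but carried out non-asymptotically and uniformly over the ellipsoid, exploiting the explicit conjugate structure of $\mathrm{S}(\cdot\mid d,\alpha)$. Write $r_{\varepsilon}:=(\varepsilon/B)^{2/(2\alpha_{0}+1)}$, so the target radius is of order $B\,r_{\varepsilon}^{\alpha_{0}}$ and the target exponent is $c(B/\varepsilon)^{2/(2\alpha_{0}+1)} = c/r_{\varepsilon}$. The first step is a prior-mass lower bound: I would show that for a well-chosen pair $(d^{*},k^{*})$ with $d^{*}\asymp r_{\varepsilon}^{-1}$ and $k^{*}$ an integer close to (but at least) $\alpha_{0}$, the prior $\Pi$ charges a Kullback--Leibler-type neighborhood of $P_{\theta_{0},\varepsilon^{2}}$ with mass at least $\exp\{-C_{1}/r_{\varepsilon}\}$. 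This is where the design of $\mathrm{S}(\cdot\mid d,\alpha)$ pays off: since $\theta_{0}\in\mathcal{E}(\alpha_{0},B)$, the bias from truncating at $d^{*}$ is $\sum_{i>d^{*}}\theta_{0,i}^{2}\le B^{2}(d^{*})^{-2\alpha_{0}}\asymp B^{2}r_{\varepsilon}^{2\alpha_{0}}$, which is of the right order; and the event that the Gaussian draw lands within $O(\varepsilon^{2}d^{*})=O(B^{2}r_{\varepsilon})$ of $\theta_{0}$ in the first $d^{*}$ coordinates is controlled by a small-ball computation for the seminorm $\sum_{i=1}^{d^{*}}i^{-2k^{*}-1}N_{i}^{2}$ — this is exactly the quantity singled out in Lemma~\ref{lem:priormasscondition}, which I would invoke to get the $\exp\{-C_{1}d^{*}\}=\exp\{-C_{1}/r_{\varepsilon}\}$ lower bound. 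The geometric tails $M(d^{*})\propto e^{-\eta d^{*}}$ and $F(k^{*})\propto e^{-\gamma k^{*}}$ contribute only an $e^{-\eta d^{*}}$ and a constant factor, which are absorbed into $C_{1}$.

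The second step is to control the prior mass on the "bad" region, i.e. on models with dimension $d$ or smoothness index $k$ far from the oracle values, and to build exponentially powerful tests. Here I would partition $\mathbb{N}\times\mathbb{N}$ into (a) $d$ too small, (b) $d$ too large, and (c) $k$ too large, treating the cases separately. For $d$ too large, the geometric prior $M(d)\propto e^{-\eta d}$ directly gives mass $\le e^{-\eta d}$, which beats $e^{-C_{2}/r_{\varepsilon}}$ once $d\gtrsim 1/r_{\varepsilon}$; combined with a crude bound on the number of effective parameters this kills the large-$d$ models without even needing a test. For $d$ too small (underfitting), the posterior cannot concentrate near such models because the likelihood penalizes the irreducible bias; alternatively, since $\mathrm{S}(\cdot\mid d,\alpha)$ is supported on $d$-dimensional coordinate subspaces, a draw from such a model is at squared distance $\ge\sum_{i>d}\theta_{0,i}^{2}$ from any fixed $\theta_{0}$ with a large high-smoothness component, and a likelihood-ratio test based on $\sum_{i=1}^{D}(X_{i}^{2}-\varepsilon^{2})$ separates it at exponential rate $e^{-c\,d}$ — uniformly in $\theta_{0}$ by a standard chi-square deviation bound. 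The case $k$ too large is handled by the $F(k)\propto e^{-\gamma k}$ tail together with the observation that inflating the assumed smoothness only shrinks the prior variances, so it does not help the fit; its contribution is $\sum_{k>k^{*}}e^{-\gamma k}\le e^{-\gamma k^{*}}$, again negligible. Assembling these, the numerator of the posterior probability is $\le e^{-C_{2}/r_{\varepsilon}}$ and the denominator (evidence) is $\ge e^{-C_{1}/r_{\varepsilon}}$ up to the test-error terms; choosing the radius constant $C$ large enough makes $C_{2}>C_{1}$ with room to spare, yielding the stated bound with $c=C_{2}-C_{1}>0$.

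The main obstacle, and the step I would budget the most care for, is the prior-mass small-ball estimate in the first step: I need a lower bound on $\mathrm{S}(\|\theta-\theta_{0}\|^{2}\le \rho^{2}\mid d^{*},k^{*})$ that is uniform over all $\theta_{0}\in\mathcal{E}(\alpha_{0},B)$, not just at a single $\theta_{0}$. Decomposing $\|\theta-\theta_{0}\|^{2}=\sum_{i\le d^{*}}(\theta_{i}-\theta_{0,i})^{2}+\sum_{i>d^{*}}\theta_{0,i}^{2}$, the tail part is uniformly $O(B^{2}(d^{*})^{-2\alpha_{0}})$ as noted; the head part requires lower-bounding the probability that a Gaussian vector with covariance $\varepsilon^{2}(d^{*})^{2k^{*}+1}\mathrm{diag}(i^{-(2k^{*}+1)})$ lies in a ball of radius $\asymp B r_{\varepsilon}^{\alpha_{0}}$ around an arbitrary center $\theta_{0}$ with $\sum i^{2\alpha_{0}}\theta_{0,i}^{2}\le B^{2}$. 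Anderson's lemma reduces this to the centered case, after which it is precisely the seminorm small-ball probability in Lemma~\ref{lem:priormasscondition}; the delicate point is matching the scaling of the prior variances (the factor $d^{2k^{*}+1}$) to the radius so that the exponent comes out as $C_{1}d^{*}$ rather than something larger like $d^{*}\log d^{*}$. This is exactly the purpose of the joint $(D,K)$ construction emphasized in the introduction, and verifying that the bookkeeping closes — that the prior variances are simultaneously large enough to cover the oracle bias and small enough to keep the small-ball exponent linear in $d^{*}$ — is the crux of the argument. The remaining ingredients (chi-square tail bounds for the tests, geometric-series sums for the tail models) are routine once the radius and exponent constants are pinned down.
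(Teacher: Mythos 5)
Your overall architecture is the paper's own: a Ghosal--Ghosh--van der Vaart decomposition with (i) a prior-mass lower bound of order $\exp\{-C_{1}(B/\varepsilon)^{2/(2\alpha_{0}+1)}\}$ obtained by selecting $d^{*}\asymp (B/\varepsilon)^{2/(2\alpha_{0}+1)}$ and an integer $k^{*}$ near $\alpha_{0}$ (this is exactly Lemmas \ref{lem:priormasscondition} and \ref{lem:Adaptivepriormasscondition}), (ii) exponential control of the prior mass far out in the model index via the geometric tail of $M$ (Lemmas \ref{lem:Essentialsupport} and \ref{lem:AdaptiveEssentialsupport}), and (iii) tests against the remaining alternatives, with the denominator handled on a high-probability event as in Lemma \ref{lem:highprobabilityset}. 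You also correctly identify the crux: that the Cameron--Martin shift cost $\sum_{i\le d^{*}} i^{2k^{*}+1}\theta_{0,i}^{2}/(\varepsilon^{2}(d^{*})^{2k^{*}+1})$ must come out linear in $d^{*}$ uniformly over the ellipsoid, which is what the scaling $\varepsilon^{2}d^{2k+1}i^{-(2k+1)}$ of the prior variances is designed to achieve.

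There are, however, two concrete missteps. First, Anderson's lemma goes the wrong way for your purpose: it states that the \emph{centered} ball has the \emph{largest} Gaussian measure, so it yields an upper bound on the shifted small-ball probability, not the lower bound you need. The correct tool is the Cameron--Martin-type inequality $\mathrm{Pr}(\|\xi-v\|\le\rho)\ge \exp\{-\|v\|_{\mathbb{H}}^{2}/2\}\,\mathrm{Pr}(\|\xi\|\le\rho)$, which the paper proves in Lemma \ref{lem:smallballestimate} via a $\cosh$ symmetrization; substituting Anderson's lemma here would leave the key lower bound unproven. Second, your treatment of the ``$d$ too small'' case misfires on two counts: the event to be excluded is $\{\|\theta-\theta_{0}\|^{2}/\varepsilon^{2}\ge C(B/\varepsilon)^{2/(2\alpha_{0}+1)}\}$, an event about $\theta$ and not about the model index (a low-dimensional model may well produce draws close to $\theta_{0}$, and such draws must \emph{not} be excluded), and the energy statistic $\sum_{i\le D}(X_{i}^{2}-\varepsilon^{2})$ cannot separate $\theta_{0}$ from an alternative at the same energy in a different direction, so it lacks uniform power over the bad set. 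The paper instead tests every far-away $\theta$ directly with the projection test $\psi=1_{\|X-\bar{\theta}\|^{2}<\|X-\theta_{0}\|^{2}}$ (Lemma \ref{lem:testcondition}) applied over an $(\varepsilon/8)$-scale net of each distance shell $R(j;c_{1})$, the number of net points being controlled by Lemma \ref{lem:coveringnumber} precisely because the shells are intersected with the essential support $E_{c_{1}}(\theta_{0})$ of effective dimension $\lesssim c_{1}(B/\varepsilon)^{2/(2\alpha_{0}+1)}$. This entropy bookkeeping is absent from your plan, and without it the union bound over tests does not close.
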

The proof of this theorem is given in Section \ref{sec:details_ss}.

\begin{cor}\label{cor:Adaptiveminimaxrate_wrtB}
For every $\alpha_{0}>0$ and every $B>0$,
the Bayes estimator based on $\Pi$ is non-asymptotically adaptive:
there exists a positive constant $C_{3}$ depending only on $\alpha_{0}$, $\eta$ of $M$, $\gamma$ of $F$
for which the inequality
\begin{align*}
	\mathop{\sup}_{\theta\in\mathcal{E}(\alpha_{0},B)}
	R (\theta, \hat{\theta}_{\Pi})
	\leq C_{3} \mathop{\inf}_{\hat{\theta}}\mathop{\sup}_{\theta\in\mathcal{E}(\alpha_{0},B)} R(\theta,\hat{\theta})
	\text{ for any } 0<\varepsilon\leq B
\end{align*}
holds.
\end{cor}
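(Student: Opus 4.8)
The plan is to deduce Corollary~\ref{cor:Adaptiveminimaxrate_wrtB} from Theorem~\ref{thm:AdaptivePosteriorConcentrationwrtB} by the usual argument that a fast posterior contraction rate, together with an exponentially small probability on the complement of the contraction ball, controls the risk of the posterior mean. First I would recall that the Bayes estimator $\hat{\theta}_{\Pi}$ is the posterior mean, so by convexity of the squared norm (Jensen's inequality) $\|\hat{\theta}_{\Pi}-\theta_{0}\|^{2}\le \int \|\theta-\theta_{0}\|^{2}\,\Pi(\mathrm{d}\theta\mid X)$. Taking $\mathrm{E}_{\theta_{0},\varepsilon^{2}}$ and normalizing by $B^{2}$ gives
\begin{align*}
	R(\theta_{0},\hat{\theta}_{\Pi})
	\le \mathrm{E}_{\theta_{0},\varepsilon^{2}}\!\int \|\theta-\theta_{0}\|^{2}/B^{2}\,\Pi(\mathrm{d}\theta\mid X).
\end{align*}
Split the inner integral over the contraction ball $\{\|\theta-\theta_{0}\|^{2}/B^{2}< C(\varepsilon/B)^{4\alpha_{0}/(2\alpha_{0}+1)}\}$ and its complement. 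On the ball the integrand is at most $C(\varepsilon/B)^{4\alpha_{0}/(2\alpha_{0}+1)}$, which after combining with \eqref{eq:PinskerMinimax_scaleratio} (giving a lower bound $c_{\mathrm{P}}(\alpha_{0})(\varepsilon/B)^{4\alpha_{0}/(2\alpha_{0}+1)}$ on the minimax risk for $\varepsilon/B$ small, and a positive lower bound for $\varepsilon/B$ bounded away from zero) is bounded by a constant multiple of $\inf_{\hat{\theta}}\sup_{\theta}R(\theta,\hat{\theta})$.

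The remaining work is to bound the contribution of the complement. Here I would need a crude deterministic bound on $\|\theta-\theta_{0}\|^{2}/B^{2}$ valid $\Pi(\cdot\mid X)$-almost surely, so that the tail probability estimate from Theorem~\ref{thm:AdaptivePosteriorConcentrationwrtB} can absorb it. The cleanest route is Cauchy--Schwarz on the posterior: $\mathrm{E}_{\theta_{0},\varepsilon^{2}}\int_{\mathrm{complement}}\|\theta-\theta_{0}\|^{2}/B^{2}\,\Pi(\mathrm{d}\theta\mid X)$ is at most $(\mathrm{E}_{\theta_{0},\varepsilon^{2}}\int \|\theta-\theta_{0}\|^{4}/B^{4}\,\Pi(\mathrm{d}\theta\mid X))^{1/2}(\mathrm{E}_{\theta_{0},\varepsilon^{2}}\Pi(\mathrm{complement}\mid X))^{1/2}$; the second factor is $\exp\{-\tfrac{c}{2}(B/\varepsilon)^{2/(2\alpha_{0}+1)}\}$ by the theorem, while the first factor is a polynomially-growing function of $B/\varepsilon$. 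Concretely, $\|\theta-\theta_{0}\|^{2}\le 2\|\theta\|^{2}+2\|\theta_{0}\|^{2}$; under $\mathrm{S}(\cdot\mid d,\alpha)$ the prior second moment of $\|\theta\|^{2}$ is $\varepsilon^{2}\sum_{i=1}^{d}d^{2\alpha+1}i^{-(2\alpha+1)}\le \varepsilon^{2}d\cdot(\text{const})$, and one controls the fourth moment similarly via Gaussian moment bounds; mixing over $d$ (geometric tail of $M$) and $k$ (geometric tail of $F$) and over the posterior keeps these polynomial in $B/\varepsilon$ after using $\varepsilon\le B$. Since an exponential beats any polynomial, this term is $O((\varepsilon/B)^{4\alpha_{0}/(2\alpha_{0}+1)})$ and in fact negligible compared with it; combining the two pieces and taking the supremum over $\theta_{0}\in\mathcal{E}(\alpha_{0},B)$ yields the claim with $C_{3}$ depending only on $\alpha_{0},\eta,\gamma$.

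The main obstacle I anticipate is making the ``crude deterministic bound'' step fully rigorous: one must verify that the posterior fourth moment $\mathrm{E}_{\theta_{0},\varepsilon^{2}}\int\|\theta\|^{4}\,\Pi(\mathrm{d}\theta\mid X)$ is genuinely polynomial in $B/\varepsilon$ uniformly over $\theta_{0}\in\mathcal{E}(\alpha_{0},B)$, which requires a lower bound on the marginal likelihood (evidence) so that the posterior does not concentrate pathologically on enormous $d$ or $k$ — exactly the kind of prior-mass estimate packaged in Lemma~\ref{lem:priormasscondition}. An alternative that sidesteps delicate moment computations is to truncate: bound $\|\theta-\theta_{0}\|^{2}/B^{2}$ on the complement by noting the Bayes estimator already satisfies a trivial a priori bound $\|\hat\theta_\Pi\|\le$ (something polynomial), but the Jensen route above is cleaner. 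Modulo that technical moment control, the deduction is the standard ``contraction rate plus exponential tail $\Rightarrow$ risk bound'' argument, and everything else is routine once Theorem~\ref{thm:AdaptivePosteriorConcentrationwrtB} and \eqref{eq:PinskerMinimax_scaleratio} are invoked.
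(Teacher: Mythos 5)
Your skeleton (Jensen's inequality, split at the contraction radius, invoke Theorem \ref{thm:AdaptivePosteriorConcentrationwrtB}, compare with a minimax lower bound) matches the paper's, but the step you yourself flag as the ``main obstacle'' is a genuine gap, and the paper avoids it entirely. After Jensen, the paper does not split the posterior integral into ball and complement; it applies the layer-cake identity
\begin{align*}
\mathrm{E}_{\theta_{0},\varepsilon^{2}}\int \|\theta-\theta_{0}\|^{2}/B^{2}\,\mathrm{d}\Pi(\theta\mid X)
=\int_{0}^{\infty}\mathrm{E}_{\theta_{0},\varepsilon^{2}}\Pi\bigl(\|\theta-\theta_{0}\|^{2}/B^{2}\geq t\mid X\bigr)\,\mathrm{d}t,
\end{align*}
bounds the integrand by $1$ on $[0,C(\varepsilon/B)^{4\alpha_{0}/(2\alpha_{0}+1)})$ and by the theorem's exponential tail beyond it (the proof of the theorem gives the tail bound at every level $j\geq C$ with exponent growing linearly in $j$, so the integral of the tail is $(C/c)\exp\{-c(B/\varepsilon)^{2/(2\alpha_{0}+1)}\}$). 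This needs no posterior moments at all. Your Cauchy--Schwarz route instead requires $\mathrm{E}_{\theta_{0},\varepsilon^{2}}\int\|\theta-\theta_{0}\|^{4}\,\mathrm{d}\Pi(\theta\mid X)$ to be polynomial in $B/\varepsilon$ uniformly over $\theta_{0}$, and your sketch only computes \emph{prior} moments under $\mathrm{S}(\cdot\mid d,\alpha)$. The posterior reweights by $(\mathrm{d}P_{\theta,\varepsilon^{2}}/\mathrm{d}P_{\theta_{0},\varepsilon^{2}})(X)$ divided by the evidence; Lemma \ref{lem:priormasscondition} together with Lemma \ref{lem:highprobabilityset} lower-bounds the evidence only on a high-probability set, and on its complement the posterior fourth moment is uncontrolled --- bounding that contribution by another Cauchy--Schwarz brings you back to the same quantity. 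As written, this step does not close.

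A secondary issue: you invoke the asymptotic relation \eqref{eq:PinskerMinimax_scaleratio} for the minimax lower bound, which only covers $B/\varepsilon$ large; to get the claim for all $0<\varepsilon\leq B$ you would additionally need, e.g., monotonicity of the minimax risk in $\varepsilon/B$ to handle $\varepsilon/B$ bounded away from zero. The paper sidesteps this by citing the non-asymptotic lower bound $\inf_{\hat{\theta}}\sup_{\theta}R(\theta,\hat{\theta})\geq \widetilde{C}_{2}(\varepsilon/B)^{4\alpha_{0}/(2\alpha_{0}+1)}$ from Theorem 4.9 of Massart, valid for every $0<\varepsilon\leq B$. If you replace your tail treatment by the layer-cake identity and the asymptotic lower bound by the non-asymptotic one, your argument becomes the paper's proof.
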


\begin{proof}[Proof of Corollary \ref{cor:Adaptiveminimaxrate_wrtB}]
Take $\theta_{0}$ arbitrarily in $\mathcal{E}(\alpha_{0},B)$.
We show that
\begin{align*}
	\mathop{\sup}_{\theta\in\mathcal{E}(\alpha_{0},B)}
	R (\theta, \hat{\theta}_{\Pi})
	\leq \widetilde{C}_{1} (\varepsilon/B)^{4\alpha_{0}/(2\alpha_{0}+1)}
\end{align*}
for some positive constant $\widetilde{C}_{1}$ not depending on $\varepsilon$ or $B$,
since
it follows
from Theorem 4.9 in \cite{Massart(2007)}
that there exists a universal positive constant $\widetilde{C}_{2}$ 
for which we have
\begin{align*}
	\mathop{\inf}_{\hat{\theta}}
	\mathop{\sup}_{\theta\in\mathcal{E}(\alpha_{0},B)} 
	R(\theta,\hat{\theta}) \geq \widetilde{C}_{2} (\varepsilon/B)^{4\alpha_{0}/(2\alpha_{0}+1)}
	\text{ for any } 0 < \varepsilon \leq B.
\end{align*}

By Jensen's inequality,
we have
\begin{align*}
	\mathrm{E}_{\theta_{0},\varepsilon^{2}}
	||\hat{\theta}_{\Pi} - \theta_{0}||^{2} / B^2
	\leq
	\mathrm{E}_{\theta_{0},\varepsilon^{2}}
	\int ||\theta-\theta_{0}||^{2} / B^2
	\mathrm{d}\Pi(\theta\mid X).
\end{align*}
By Fubini's theorem,
we have
\begin{align*}
        \mathrm{E}_{\theta_{0},\varepsilon^{2}}
	\int ||\theta-\theta_{0}||^{2} / B^2
	\mathrm{d}\Pi(\theta\mid X)
	&=\mathrm{E}_{\theta_{0},\varepsilon^{2}}
	\int_{0}^{\infty}\Pi( \|\theta-\theta_{0}\|^2 / B^2 \geq t \big{|} X)
	\mathrm{d}t
	\nonumber\\
	&=\int_{0}^{\infty} \mathrm{E}_{\theta_{0},\varepsilon^{2}}
	\Pi ( \|\theta-\theta_{0}\|^2 / B^2 \geq t \big{|} X )
	 \mathrm{d}t.
\end{align*}
Taking sufficiently large $C$ depending only on $\alpha_{0}$, $\eta$, and $\gamma$,
and 
dividing $[ 0 , \infty )$ into $[ 0 , C (\varepsilon/B)^{4\alpha_{0}/(2\alpha_{0}+1)} ) $ and
$ [ C (\varepsilon/B)^{4\alpha_{0}/(2\alpha_{0}+1)} , \infty ) $,
Theorem \ref{thm:AdaptivePosteriorConcentrationwrtB} yields
\begin{align*}
	\int_{0}^{\infty} &\mathrm{E}_{\theta_{0},\varepsilon^{2}}
	\Pi( \|\theta-\theta_{0}\|^2 /  B^2 \geq t \big{|} X ) \mathrm{d}t
	\nonumber\\
	&\leq
	C (\varepsilon / B )^{4\alpha_{0}/(2\alpha_{0}+1)}
	+ (C/c) \exp\{ - c ( B/\varepsilon )^{2/(2\alpha_{0}+1)}\},
\end{align*}
where $c$ is the constant in Theorem \ref{thm:AdaptivePosteriorConcentrationwrtB}.
Since constants $C$ and $c$ do not depend on $\theta_{0}$,
the above inequality completes the proof.
\end{proof}

Several remarks are provided in order.

\begin{rem}[Posterior contraction of Gaussain prior distributions]
In Section \ref{sec:Failures},
we showed that the Bayes estimator based on the Gaussian prior $\mathrm{G}(\cdot\mid\alpha)$ does not satisfy (\ref{eq:ScaleRatioMinimax}).
	This prior also does not possess posterior contraction at the rate $(\varepsilon/B)^{4\alpha_{0}/(2\alpha_{0}+1)}$ with respect to $\varepsilon/B$.
Consider $\mathrm{G}(\cdot\mid\alpha)=\otimes_{i=1}^{\infty}\mathcal{N}(0,i^{-2\alpha-1})$.
Let $\varepsilon=1$ and let $\bar{\theta}$ be an $l_{2}$-vector of which the $i$-th coordinate is $B$ if $i=1$ and 0 if otherwise.
For any $\delta>0$, any $C>0$, and $P_{\bar{\theta},1}$-almost all $x$, we have
\begin{align*}
	\mathrm{G}&(||\theta-\bar{\theta}||^{2} /B^2 < CB^{-2\delta}\mid X=x , \alpha=\alpha_{0})\nonumber\\
		&= \mathrm{G}
	\bigg{[}\sum_{i=2}^{\infty}\theta_{i}^{2} + (\theta_{1}-B)^{2} < CB^{2-2\delta} \mid X=x , \alpha=\alpha_{0} \bigg{]}\nonumber\\
	&\leq \mathrm{G} [(\theta_{1}-B)^{2}<CB^{2-2\delta}\mid X=x , \alpha=\alpha_{0} ]\nonumber\\
		&=\mathrm{Pr}
	[B(1-\sqrt{C}B^{-\delta})< (N-x_{1}/\sqrt{2})/\sqrt{2} < B(1+\sqrt{C}B^{-\delta})]\nonumber\\
		&\to 0 \text{ as $B\to\infty$},
\end{align*}
where $N$ is a one-dimensional standard normal random variable.
Thus, by the dominated convergence theorem, we have
\begin{align*}
		\lim_{B\to\infty}\sup_{\theta_{0}\in\mathcal{E}(\alpha_{0},B)}
		\mathrm{E}_{\theta_{0},\varepsilon^{2}}
		\mathrm{G}(||\theta-\theta_{0}||^{2} /B^2 \geq CB^{-2\delta}\mid X , \alpha=\alpha_{0}) =1.
\end{align*}
\end{rem}

\begin{rem}[A further possibility]
\label{rem: Another possibility}
We mention the possibility that the Bayes estimator based on another prior distribution could attain 
non-asymptotic adaptation.
\cite{Szabo_vanderVaart_vanZanten(2013)} 
considered the Bayes estimator $\hat{\theta}_{V}$ based on
the Gaussian scale mixture prior distribution
\begin{align*}
	\int_{0}^{\infty} \int_{0}^{\infty}
	\otimes_{i=1}^{\infty} \mathcal{N}(0, v i^{-2\alpha-1}) \mathrm{d}V(v)\mathrm{d}A(\alpha),
\end{align*}
where $V$ and $A$ are distributions on $[0,\infty)$.
When addressing the Gaussian process prior distribution,
the mixture with respect to the prior variance is often used;
see also \cite{RasmussenandWilliams(2005)}, \cite{vanderVaartandvanZanten(2009)}, and \cite{Suzuki(2012)}.
Although we conjecture that Bayes estimators based on the scale mixtures would be non-asymptotically adaptive 
(see Appendix \ref{Appendix:secondary experiments}),
proving that appears to be challenging.
Our prior distribution $\Pi$ enjoys the discrete structure of a prior distribution of $d$.
A computational advantage of $\Pi$
is that the discrete structure simplifies the calculation of the posterior distribution.
For the explicit form of the posterior distribution, see Appendix \ref{Appendix: explicit form}.
A technical advantage of $\Pi$
is that the calculations of the essential support and the small ball probability might be easier,
as shown in Lemmas \ref{lem:Essentialsupport} and \ref{lem:priormasscondition}.
\end{rem}

\begin{rem}[Non-asymptotic adaptation of $\mathrm{S}_{M}(\cdot\mid\alpha)$ in the undersmooth region]
\label{rem: oversmooth region}
The proof of the principal theorem is based on
the following non-asymptotic adaptation of $\mathrm{S}_{M}(\cdot\mid\alpha)$
in the undersmooth case that $\alpha \geq \alpha_{0}-1/2$.
The proof of the following theorem
is provided in Section \ref{sec:details_ss}.
\begin{thm}
\label{thm:PosteriorConcentrationwrtB}
Assume that $\alpha \geq \alpha_{0}-1/2$.
Then,
there exist positive numbers $C$ and $c$ depending only on $\alpha_{0}$ and $\eta$ of $M$
for which
the inequality
\begin{align*}
	\mathrm{E}_{\theta_{0},\varepsilon^{2}}
		&\mathrm{S}_{M} ( ||\theta-\theta_{0}||^{2} / B^2
	\geq C ( \varepsilon / B )^{4\alpha_{0}/(2\alpha_{0}+1)} \mid X , \alpha )
	\leq \exp\{-c ( B/\varepsilon )^{2/(2\alpha_{0}+1)}\},
\end{align*}
holds uniformly in $\theta_{0}\in \mathcal{E}(\alpha_{0},B)$,
provided that $B/\varepsilon$ is larger than one.
\end{thm}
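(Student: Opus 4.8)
The plan is to run the Ghosal--Ghosh--van der Vaart posterior-contraction scheme on the normalized scale, with the contraction radius and the effective dimension pinned to the Pinsker values. Put $r:=B/\varepsilon>1$ and $d^{*}:=\lceil r^{2/(2\alpha_{0}+1)}\rceil$, so that $\varepsilon^{2}d^{*}\asymp B^{2}(\varepsilon/B)^{4\alpha_{0}/(2\alpha_{0}+1)}$ and (up to the ceiling constant) the event in the statement is $B_{n}:=\{\theta:\|\theta-\theta_{0}\|^{2}\geq C\varepsilon^{2}d^{*}\}$. Write the posterior as $\mathrm{S}_{M}(B_{n}\mid X,\alpha)=N_{n}/D_{n}$ with $D_{n}:=\int(\mathrm{d}P_{\theta,\varepsilon^{2}}/\mathrm{d}P_{\theta_{0},\varepsilon^{2}})(X)\,\mathrm{d}\mathrm{S}_{M}(\theta\mid\alpha)$ and $N_{n}$ the same integral over $B_{n}$; recall that in the Gaussian sequence model $\log(\mathrm{d}P_{\theta,\varepsilon^{2}}/\mathrm{d}P_{\theta_{0},\varepsilon^{2}})(X)=\varepsilon^{-1}\langle Z,\theta-\theta_{0}\rangle-(2\varepsilon^{2})^{-1}\|\theta-\theta_{0}\|^{2}$ with $Z_{i}:=(X_{i}-\theta_{0,i})/\varepsilon$ i.i.d.\ $\mathcal{N}(0,1)$, and that $\mathrm{E}_{\theta_{0},\varepsilon^{2}}[\mathrm{d}P_{\theta,\varepsilon^{2}}/\mathrm{d}P_{\theta_{0},\varepsilon^{2}}]=1$ whenever $\theta-\theta_{0}\in l_{2}$, which holds $\mathrm{S}_{M}(\cdot\mid\alpha)$-almost surely because each $\mathrm{S}(\cdot\mid d,\alpha)$ charges only finitely supported sequences. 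I would then use the standard bound
\[
\mathrm{E}_{\theta_{0},\varepsilon^{2}}\mathrm{S}_{M}(B_{n}\mid X,\alpha)\leq\mathrm{E}_{\theta_{0},\varepsilon^{2}}\phi_{n}+P_{\theta_{0},\varepsilon^{2}}(D_{n}<e^{-c_{3}d^{*}})+e^{c_{3}d^{*}}\bigl(\mathrm{E}_{\theta_{0},\varepsilon^{2}}[(1-\phi_{n})N_{n}^{(1)}]+\mathrm{S}_{M}(\Theta_{n}^{c}\mid\alpha)\bigr),
\]
where $\Theta_{n}$ is a sieve, $N_{n}^{(1)}$ is $N_{n}$ restricted to $B_{n}\cap\Theta_{n}$, and $\phi_{n}$ a test, the aim being to make every summand at most $e^{-c\,d^{*}}$.

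For the denominator I would take the single model $d_{0}:=d^{*}$ and invoke Lemma~\ref{lem:priormasscondition} in the form $M(d_{0})\,\mathrm{S}(\|\theta-\theta_{0}\|^{2}\leq K\varepsilon^{2}d^{*}\mid d_{0},\alpha)\geq e^{-c_{1}d^{*}}$. Restricting $D_{n}$ to that ball, applying Jensen's inequality to the renormalized restriction of $\mathrm{S}(\cdot\mid d_{0},\alpha)$, and bounding the quadratic part of the averaged log-likelihood by $Kd^{*}/2$ and the linear part (a centred Gaussian of variance at most $Kd^{*}$) by $\sqrt{K}\,d^{*}$ off an event of probability at most $e^{-d^{*}/2}$, yields $D_{n}\geq e^{-c_{3}d^{*}}$ outside an event of $P_{\theta_{0},\varepsilon^{2}}$-probability at most $e^{-d^{*}/2}$. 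For the sieve I would take $\Theta_{n}:=\{\theta:\theta_{i}=0\text{ for }i>Ad^{*}\}\cap\{\|\theta\|^{2}\leq R_{n}^{2}\}$; since $M(d)\propto e^{-\eta d}$ has geometric tails and Lemma~\ref{lem:Essentialsupport} controls the Gaussian seminorm $\varepsilon^{2}d^{2\alpha+1}\sum_{i\leq d}i^{-(2\alpha+1)}N_{i}^{2}$, a suitable $R_{n}$ gives $\mathrm{S}_{M}(\Theta_{n}^{c}\mid\alpha)\leq e^{-c_{4}Ad^{*}}$, hence $\mathrm{E}_{\theta_{0},\varepsilon^{2}}$ of the last term is $\leq e^{-c_{4}Ad^{*}}$ by Fubini.

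Inside the sieve I would peel $B_{n}\cap\Theta_{n}$ into shells $S_{j}:=\{\theta:2^{j}\sqrt{C}\,\varepsilon\sqrt{d^{*}}\leq\|\theta-\theta_{0}\|<2^{j+1}\sqrt{C}\,\varepsilon\sqrt{d^{*}}\}$, $j\geq0$, and test each shell with the $\chi^{2}$-type statistic $\phi_{n,j}:=\mathbf{1}\{\sum_{i\leq Ad^{*}}[(X_{i}-\theta_{0,i})^{2}-\varepsilon^{2}]>\tau_{j}\}$, $\tau_{j}\asymp C4^{j}\varepsilon^{2}d^{*}$, with $\phi_{n}:=\max_{j}\phi_{n,j}$. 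The point is that, since $\theta_{0}\in\mathcal{E}(\alpha_{0},B)$ forces $\sum_{i>Ad^{*}}\theta_{0,i}^{2}\leq B^{2}(Ad^{*})^{-2\alpha_{0}}\leq A^{-2\alpha_{0}}\varepsilon^{2}d^{*}$, on $S_{j}\cap\Theta_{n}$ the truncated discrepancy $\sum_{i\leq Ad^{*}}(\theta_{i}-\theta_{0,i})^{2}$ is still at least $\tfrac12 4^{j}C\varepsilon^{2}d^{*}$ once $A$ is large; the Laurent--Massart deviation inequalities for $\chi^{2}$ variables then give $\mathrm{E}_{\theta_{0},\varepsilon^{2}}\phi_{n}\leq\sum_{j}e^{-c_{I}C4^{j}d^{*}}\lesssim e^{-c_{I}Cd^{*}}$ and $\sup_{\theta\in S_{j}\cap\Theta_{n}}\mathrm{E}_{\theta,\varepsilon^{2}}(1-\phi_{n})\leq e^{-c_{II}C4^{j}d^{*}}$, whence, by Fubini and $\mathrm{E}_{\theta_{0},\varepsilon^{2}}[(1-\phi_{n})\,\mathrm{d}P_{\theta,\varepsilon^{2}}/\mathrm{d}P_{\theta_{0},\varepsilon^{2}}]=\mathrm{E}_{\theta,\varepsilon^{2}}(1-\phi_{n})$, $\mathrm{E}_{\theta_{0},\varepsilon^{2}}[(1-\phi_{n})N_{n}^{(1)}]\leq\sum_{j\geq0}e^{-c_{II}C4^{j}d^{*}}\lesssim e^{-c_{II}Cd^{*}}$. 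To assemble, I would first fix $A$ so that $c_{4}A>c_{3}$, then $C$ so that $\min(c_{I},c_{II})\,C>c_{3}$; using $d^{*}\geq1$ for the $e^{-d^{*}/2}$ term, all four summands are $\leq e^{-c\,d^{*}}$ for a $c$ depending only on $\alpha_{0}$ and $\eta$, and since none of $A,C,c$ depends on $\theta_{0}$, the bound is uniform over $\mathcal{E}(\alpha_{0},B)$; dividing the radius by $B^{2}$ and using $\varepsilon^{2}d^{*}\asymp B^{2}(\varepsilon/B)^{4\alpha_{0}/(2\alpha_{0}+1)}$, $d^{*}\asymp(B/\varepsilon)^{2/(2\alpha_{0}+1)}$ is exactly the claimed inequality.

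The main obstacle is Lemma~\ref{lem:priormasscondition}, and this is exactly where $\alpha\geq\alpha_{0}-1/2$ is spent. Writing $\|\theta-\theta_{0}\|^{2}=\sum_{i\leq d_{0}}v_{i}(N_{i}-a_{i})^{2}+\sum_{i>d_{0}}\theta_{0,i}^{2}$ with $v_{i}:=\varepsilon^{2}(d_{0}/i)^{2\alpha+1}$, $a_{i}:=\theta_{0,i}/\sqrt{v_{i}}$, the tail is $\leq B^{2}d_{0}^{-2\alpha_{0}}\asymp\varepsilon^{2}d^{*}$, and a Cameron--Martin (Anderson) shift lower-bounds the head probability by $e^{-\frac12\sum_{i\leq d_{0}}a_{i}^{2}}$ times the centred small-ball probability $\mathrm{Pr}(\sum_{i\leq d_{0}}v_{i}N_{i}^{2}\leq K'\varepsilon^{2}d^{*})$. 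Since $\sum_{i\leq d_{0}}a_{i}^{2}=\varepsilon^{-2}d_{0}^{-(2\alpha+1)}\sum_{i\leq d_{0}}\theta_{0,i}^{2}i^{2\alpha+1}$ and $\theta_{0,i}^{2}i^{2\alpha+1}=(\theta_{0,i}^{2}i^{2\alpha_{0}})\,i^{2\alpha+1-2\alpha_{0}}$, the extra exponent $2\alpha+1-2\alpha_{0}$ is nonnegative on $i\leq d_{0}$ precisely when $\alpha\geq\alpha_{0}-1/2$, and then $\sum_{i\leq d_{0}}a_{i}^{2}\leq\varepsilon^{-2}d_{0}^{-2\alpha_{0}}\sum_{i}\theta_{0,i}^{2}i^{2\alpha_{0}}\leq r^{2}d_{0}^{-2\alpha_{0}}\asymp d^{*}$; for smaller $\alpha$ this sum can diverge relative to $d^{*}$, which is why the undersmooth restriction is genuinely needed. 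The remaining small-ball probability is bounded below by the product $\prod_{i\leq d_{0}}\mathrm{Pr}(v_{i}N_{i}^{2}\leq K'\varepsilon^{2}d^{*}/d_{0})$ of one-dimensional Gaussian probabilities, which by a Stirling estimate of $\sum_{i\leq d_{0}}\log(i/d_{0})\asymp-d_{0}$ is at least $e^{-c_{1}d^{*}}$; multiplying by $M(d_{0})\geq e^{-\eta'd^{*}}$ closes the lemma. Beyond this, the remaining care is bookkeeping: choosing $R_{n}$ and the geometric-tail constants of $M$ in Lemma~\ref{lem:Essentialsupport} so that $c_{4}A$ can be made arbitrarily large, and keeping all constants uniform over $\theta_{0}$ and over $0<\varepsilon\leq B$ (in the regime $d^{*}=O(1)$ the stated exponential bound just amounts to a fixed constant $<1$, for $c$ small enough).
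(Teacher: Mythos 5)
Your proposal is correct, and it follows the same Ghosal--Ghosh--van der Vaart architecture as the paper (lower bound on the denominator via a prior-mass/small-ball estimate plus Gaussian concentration, a sieve with exponentially small prior mass, and shell-wise testing), but the testing step is genuinely different. The paper covers each shell $R(j;c_{1})$ by an $(\varepsilon/8)\sqrt{j(B/\varepsilon)^{2/(2\alpha_{0}+1)}}$-net (Lemma \ref{lem:coveringnumber}, giving $\log N(j;c_{1})\leq 2c_{1}(B/\varepsilon)^{2/(2\alpha_{0}+1)}$) and attaches to each net point a likelihood-ratio test $\psi_{(j)}=1_{\|X-\bar{\theta}_{(j)}\|^{2}<\|X-\theta_{0}\|^{2}}$ (Lemma \ref{lem:testcondition}), then sums type-I and type-II errors over the net; you instead use one global $\chi^{2}$ statistic per shell, which exploits the explicit Gaussian structure and lets the Laurent--Massart deviation bounds replace the metric-entropy count entirely. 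Your sieve $\{\theta_{i}=0,\ i>Ad^{*}\}$ plays exactly the role of the paper's $E_{c_{1}}(\theta_{0})$ (and, as you can check, the additional constraint $\|\theta\|^{2}\leq R_{n}^{2}$ is not needed, since the shell peeling already caps the noncentrality parameter on each shell), and your Cameron--Martin treatment of the prior mass, including the observation that $\alpha\geq\alpha_{0}-1/2$ is spent precisely to bound $\sum_{i\leq d_{0}}i^{2\alpha+1}\theta_{0,i}^{2}/(\varepsilon^{2}d_{0}^{2\alpha+1})$ by a multiple of $d^{*}$, coincides with the paper's Lemmas \ref{lem:priormasscondition} and \ref{lem:smallballestimate} (which use the $\cosh$ symmetrization rather than an explicit shift, to the same effect). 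What each approach buys: the paper's net-plus-likelihood-ratio tests are the generic route and transfer to settings where no exact distributional identity for a test statistic is available, while your $\chi^{2}$ route is shorter and avoids the covering-number lemma altogether, at the cost of being specific to the Gaussian sequence model. The only care needed in your write-up is the order of constants --- $C$ must be taken large relative to $A$ so that the cross term $2\sqrt{Ad^{*}x}$ in the Laurent--Massart bound is dominated by the threshold $\tau_{j}$ for all $j\geq 0$ --- which is consistent with the order you state.
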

From Theorem \ref{thm:PosteriorConcentrationwrtB},
the Bayes estimator based on $\mathrm{S}_{M}(\cdot\mid\alpha)$
is non-asymptotically adaptive at least in the undersmooth region where $\alpha \geq \alpha_{0}-1/2$.
\end{rem}

\subsection{Application to non-asymptotically adaptive estimation in nonparametric regression models}
\label{subsec: application}

Our results also construct a non-asymptotically adaptive Bayes estimator in nonparametric regression models.
Consider estimating a regression function $f: [0,1]\to \mathbb{R}$ based on observations $\{Y_{1} , \ldots , Y_{n} \}$ obeying
\begin{align*}
	Y_{i} = f(i/n) + W_{i}, \  i=1,\ldots, n,
\end{align*}
where $W_{i}$s' are ${\it i.i.d.}$ error terms from $\mathcal{N}(0,1)$.
We denote by $\{\phi_{1},\phi_{2},\ldots\}$ the trigonometric series (\ref{eq:trigonometric}).
We assume that $f$ belongs to the periodic Sobolev space $\mathcal{W}(\alpha_{0},B)$ defined as follows:
For $\alpha_{0}>0$ and $B>0$,
\begin{align*}
	\mathcal{W}(\alpha_{0},B):= \bigg{\{} f=\sum_{i=1}^{\infty}\theta_{i}\phi_{i} : \sum_{j=1}^{\infty}a_{j}^2 \theta_{j}^{2} \leq B^2 / \pi^{2\alpha_{0}}  \bigg{\}}  ,
\end{align*}
where $a_{1}=0$ and for $k\in\mathbb{N}$,
\begin{align*}
	a_{2k}= (2k)^{\alpha_{0}} \text{ and } a_{2k+1}= (2k)^{\alpha_{0}}.
\end{align*}
In the cases in which $\alpha_{0}$ is a positive integer,
it follows from the Parseval equality that
$\sum_{j=1}^{\infty}a_{j}^{2}\theta_{j}^{2} = \|f^{(\alpha_{0})}\|^{2}_{L_{2}} / \pi^{2\alpha_{0}}$,
where $\|\cdot\|_{L_{2}}$ is the $L_{2}[0,1]$-norm.

For an arbitrary positive integer $p$ such that $p < n$,
we work with a prior distribution of $f$ using a prior on $\mathbb{R}^{p}$.
Let $\Pi^{(p)}$ be a prior distribution of the form
\begin{align*}
	\Pi^{(p)}
	=\sum_{d=1}^{p}M(d) \sum_{k=1}^{\infty} \mathrm{S}(\cdot\mid d, \alpha=k)
	\bigg{/} \sum_{\tilde{d}=1}^{p}M(\tilde{d}).
\end{align*}
We identify $\Pi^{(p)}$ as a prior distribution of $f$ via the transformation 
$(\theta_{1},\ldots,\theta_{p}) \to \sum_{i=1}^{p} \theta_{i}\phi_{i}$.
Let $\hat{f}_{\Pi^{(p)}}$ be the Bayes estimator based on $\Pi^{(p)}$.

The following corollary provides a non-asymptotic risk bound for the Bayes estimator $\hat{f}_{\Pi^{(p)}}$.
Let $R_{n}(f,\hat{f})$ be 
the risk of an estimator $\hat{f}$ defined by
\begin{align*}
	R_{n}(f,\hat{f}):= \mathrm{E}_{f,n}\| f - \hat{f} \|^2_{L_2}/ B^2,
\end{align*}
where 
$\|\cdot\|_{L_2}$ is the $L_{2}[0,1]$ norm,
and
$\mathrm{E}_{f,n}$ is the expectation with respect to the distribution of  $\{Y_{1} , \ldots , Y_{n}\}$ with true regression function $f$.
Let $\tau(p)$ be the approximation error
\begin{align*}
\tau(p):= \sup_{f\in\mathcal{W}(\alpha_{0},B)}\inf_{\theta\in \mathbb{R}^{p} } \left\|f-\sum_{i=1}^{p}\theta_{i}\phi_{i}\right\|^2_{L_{2}}.
\end{align*}
\begin{cor}
\label{cor:finite}
There exists a positive constant $C_{4}$ depending only on $\alpha_{0}$, $\eta$ of $M$, and $\gamma$ of $F$
for which
the Bayes estimator $\hat{f}_{\Pi^{(p)}}$ based on $\Pi^{(p)}$ satisfies
\begin{align}
	\sup_{ f \in \mathcal{W}(\alpha_{0},B)} 
	R_{n} (f , \hat{f}_{\Pi^{(p)}})
	\leq C_{4}[\min\{ p , (n B^2 )^{1/(2\alpha_{0}+1)} \} / n + \tau(p) ]/B^2 ,
\label{eq:cor:finite}
\end{align}
provided that $nB$ is larger than one.
\end{cor}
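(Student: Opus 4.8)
The plan is to reduce the regression problem to a $p$-dimensional Gaussian sequence model with noise level $1/n$, to isolate the $L_{2}$ tail as the approximation error $\tau(p)$, and to invoke the proof of Theorem~\ref{thm:AdaptivePosteriorConcentrationwrtB} with the model dimension capped at $p$. First I would pass from $(Y_{1},\dots,Y_{n})$ to the empirical Fourier coefficients $Z_{j}:=n^{-1}\sum_{i=1}^{n}Y_{i}\phi_{j}(i/n)$, $j=1,\dots,p$. For $p<n$ the discrete orthogonality relations $n^{-1}\sum_{i=1}^{n}\phi_{j}(i/n)\phi_{k}(i/n)=\delta_{jk}$ hold for $1\le j,k\le p$, and expanding the Gaussian likelihood of $(Y_{1},\dots,Y_{n})$ as a function of $(\theta_{1},\dots,\theta_{p})\in\mathbb{R}^{p}$ shows that it depends on the data only through $Z_{1},\dots,Z_{p}$; hence the posterior of $\theta$ under $\Pi^{(p)}$, and therefore $\hat{f}_{\Pi^{(p)}}$, is a measurable function of $Z$ alone. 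Moreover $Z_{j}=\bar{\theta}_{j}+n^{-1/2}\xi_{j}$ with $\xi_{1},\dots,\xi_{p}$ i.i.d.\ $\mathcal{N}(0,1)$ and $\bar{\theta}_{j}:=n^{-1}\sum_{i=1}^{n}f(i/n)\phi_{j}(i/n)$, so the posterior of $(\theta_{1},\dots,\theta_{p})$ is exactly the posterior in the Gaussian sequence model of dimension $p$ with $\varepsilon^{2}=1/n$ and prior $\Pi^{(p)}$ (which is $\Pi$ conditioned on $\{d\le p\}$; since $\sum_{\tilde d\le p}M(\tilde d)$ is bounded away from $0$ and $1$, this conditioning affects constants only).

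Next, by Parseval's identity in $L_{2}[0,1]$, for every realization
\[
\|f-\hat{f}_{\Pi^{(p)}}\|_{L_{2}}^{2}=\sum_{j>p}\theta_{0,j}^{2}+\sum_{j\le p}(\theta_{0,j}-\hat{\theta}_{\Pi^{(p)},j})^{2},\qquad \theta_{0,j}:=\langle f,\phi_{j}\rangle ,
\]
and taking the supremum of the first sum over $f\in\mathcal{W}(\alpha_{0},B)$ gives exactly $\tau(p)$. In the second sum I would write $\theta_{0,j}-\hat{\theta}_{\Pi^{(p)},j}=(\theta_{0,j}-\bar{\theta}_{j})+(\bar{\theta}_{j}-\hat{\theta}_{\Pi^{(p)},j})$; the aliasing term obeys $\sum_{j\le p}(\theta_{0,j}-\bar{\theta}_{j})^{2}\lesssim n^{-2\alpha_{0}}B^{2}\lesssim\tau(p)$ uniformly over $\mathcal{W}(\alpha_{0},B)$ by the standard Poisson-summation estimates for the equispaced design (using $p<n$), and the same estimates give $\bar{\theta}\in\mathcal{E}(\alpha_{0},CB)$ for a constant $C=C(\alpha_{0})$ after the routine relabelling that turns the weights $a_{j}$ into $j^{\alpha_{0}}$ (the unconstrained coefficient $\theta_{1}$ being handled separately, since it is estimated at the parametric rate $1/n$).

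It then remains to bound $\sup_{f}\mathrm{E}_{f,n}\sum_{j\le p}(\bar{\theta}_{j}-\hat{\theta}_{\Pi^{(p)},j})^{2}$ in the capped sequence model. If $(nB^{2})^{1/(2\alpha_{0}+1)}\le p$, the ``oracle'' components $(d^{*},k^{*})$ used in the proof of Theorem~\ref{thm:AdaptivePosteriorConcentrationwrtB} satisfy $d^{*}\le p$ and so belong to $\Pi^{(p)}$; the prior-mass bound of Lemma~\ref{lem:priormasscondition} and the essential-support bound of Lemma~\ref{lem:Essentialsupport} then apply verbatim, the posterior contracts at rate $(\varepsilon/B)^{4\alpha_{0}/(2\alpha_{0}+1)}$, and the Jensen/Fubini argument of the proof of Corollary~\ref{cor:Adaptiveminimaxrate_wrtB} yields $\mathrm{E}\sum_{j\le p}(\bar{\theta}_{j}-\hat{\theta}_{j})^{2}\lesssim(nB^{2})^{1/(2\alpha_{0}+1)}/n$. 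If instead $(nB^{2})^{1/(2\alpha_{0}+1)}>p$, I would run the same scheme with $d=p$ in place of $d^{*}$: for a suitable $\alpha=k$ and after mixing with $F$, the component $\mathrm{S}(\cdot\mid p,\alpha)$ charges a $\sqrt{p/n}$-ball around $\bar{\theta}$ with mass at least $\exp\{-c\,p\}$, which matches the loss scale $p/n=p\varepsilon^{2}$; the same contraction-plus-Jensen argument then gives $\mathrm{E}\sum_{j\le p}(\bar{\theta}_{j}-\hat{\theta}_{j})^{2}\lesssim p/n$. Combining the two cases bounds the estimation error by $\lesssim\min\{p,(nB^{2})^{1/(2\alpha_{0}+1)}\}/n$; adding the two $\tau(p)$-contributions from the previous paragraph and dividing by $B^{2}$ yields (\ref{eq:cor:finite}) (noting $\varepsilon/B<1\Leftrightarrow nB^{2}>1$, which is covered by the hypothesis).

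The main obstacle is the capped regime $(nB^{2})^{1/(2\alpha_{0}+1)}>p$: one must check that, with the model dimension pinned at the largest admissible value $p$ rather than at the oracle level $d^{*}$, the truncated prior still puts mass of order $\exp\{-c\,p\}$ on the correct $\sqrt{p/n}$-neighbourhood of $\bar{\theta}$ — the exponent being forced to equal $n\cdot(p/n)=p$ — so that the Bayes estimator attains the saturated $p$-dimensional rate $p/n$ uniformly over the Sobolev ball. The aliasing estimates entering the reduction step are routine but must be written out explicitly, since $\tau(p)$ appears as a separate term in (\ref{eq:cor:finite}).
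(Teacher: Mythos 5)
Your proposal follows the paper's own proof essentially step for step: a sufficiency reduction to the $p$-dimensional Gaussian sequence model with noise level $\varepsilon^{2}=1/n$ (the paper likewise invokes the orthonormality of $\{(\phi_{i}(1/n),\ldots,\phi_{i}(1))^{\top}/\sqrt{n}\}_{i\le p}$), the Parseval decomposition that isolates $\sum_{j>p}\theta_{0,j}^{2}\le\tau(p)$, and a re-run of the contraction argument of Theorem~\ref{thm:AdaptivePosteriorConcentrationwrtB} with the target rate replaced by $\underline{p}/n$, $\underline{p}=\min\{p,(nB^{2})^{1/(2\alpha_{0}+1)}\}$, checking lemma by lemma that the cap at dimension $p$ is harmless. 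The only real divergence is cosmetic: you make the aliasing term $\theta_{0,j}-\bar{\theta}_{j}$ explicit and fold it into $\tau(p)$, whereas the paper disposes of it with a one-line sufficiency remark.

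The one substantive point is exactly the one you flag as ``the main obstacle,'' and your asserted resolution is not what the small-ball lemma actually delivers. With $d=p$ and $v_{i}=\theta_{0,i}/(\varepsilon p^{\alpha+1/2})$, Lemma~\ref{lem:smallballestimate} gives mass at least $\exp\{-\sum_{i\le p}i^{2\alpha+1}\theta_{0,i}^{2}/(2\varepsilon^{2}p^{2\alpha+1})-c_{3}p\}$ on the $\sqrt{p/n}$-ball, and the first exponent is only bounded by $nB^{2}p^{-2\alpha_{0}}$ over the Sobolev ball (take $\theta_{0}=(B,0,\ldots,0)$ to see this is sharp); in the capped regime $p<(nB^{2})^{1/(2\alpha_{0}+1)}$ this exceeds $p$, so the prior mass is \emph{not} of order $\exp\{-cp\}$, and indeed the intermediate contraction statement at radius$^{2}$ $Cp/n$ fails (already at $p=1$, where the posterior mean is $X_{1}/2$). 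The corollary nonetheless survives: run the argument at radius$^{2}$ $C\{p/n+B^{2}p^{-2\alpha_{0}}\}$, for which the small-ball exponent, the covering bound $2p$, and the test errors all match, and then absorb the extra term using $B^{2}p^{-2\alpha_{0}}\le C(\alpha_{0})\tau(p)$ (attained by putting all mass on the $(p+1)$-st coordinate). You should supply this enlargement explicitly; the paper's own proof is equally terse at this step, asserting that Lemma~\ref{lem:Adaptivepriormasscondition} ``still holds'' with $T$ replaced by $p$, which is literally false without routing the deficit through $\tau(p)$.
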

The proof is a simple extension of that of Theorem \ref{thm:AdaptivePosteriorConcentrationwrtB} and
is given in Subsection\ref{subsec:proofofcor}.

The implication of this corollary is that 
$\hat{f}_{\Pi^{(n-1)}}$ is non-asymptotically adaptive, provided that $n^{\alpha_{0}}\geq B$.
From Corollary \ref{cor:finite} with $p=n-1$, and from the bound that $\tau(p)\leq B^2 p^{-2\alpha_{0}}$,
there exists a positive constant $C_{5}$ not depending on $n$ or $B$ such that
\begin{align*}
\sup_{f\in \mathcal{W}(\alpha_{0},B)} R_{n} (f , \hat{f}_{\Pi^{(n-1)}}) 
\leq C_{5} (n B^{2})^{-2\alpha_{0}/ (2\alpha_{0}+1)},
\end{align*}
provided that $n^{\alpha_{0}}\geq B$.
From Theorem 4.9 in \cite{Massart(2007)},
there exists a positive constant $C_{6}$ depending only on $\alpha_{0}$ for which the inequality
\begin{align*}
	C_{6}\max_{1\leq p < n}\min\{ p^{-2\alpha_{0}},p/(nB^2) \}
 	\leq \inf_{\hat{f}} \sup_{f \in \mathcal{W}(\alpha_{0},B)} R_{n} (f,\hat{f})
\end{align*}
holds.
Thus, 
there exists a positive constant $C_{7}$ not depending on $n$ or $B$ such that
\begin{align*}
	\sup_{f\in \mathcal{W}(\alpha_{0},B)} R_{n}(f, \hat{f}_{\Pi^{(n-1)}})
	\leq C_{7} \inf_{\hat{f}} \sup_{f \in \mathcal{W}(\alpha_{0},B)} R_{n} (f,\hat{f}),
\end{align*}
provided that $n^{\alpha_{0}}\geq B$.

\section{Numerical experiments}
\label{sec:numerical}

In this section, 
we present numerical experiments 
focusing on the performance comparison of non-asymptotically adaptive estimators
in low-$(B/\varepsilon)$ settings.
The other comparisons
including the comparison between non-asymptotically adaptive estimators 
and estimators not satisfying (\ref{eq:ScaleRatioMinimax})
are provided in Appendix \ref{Appendix:secondary experiments}.

The numerical experiments are intended to compare non-asymptotically adaptive estimators.
The following three estimators are compared:
\begin{itemize}
\item the Bayes estimator $\hat{\theta}_{\Pi}$ based on $\Pi$ with $\eta=2$ and $\gamma=2$;
\item the model averaging estimator $\hat{\theta}_{\mathrm{MA},1/2}$ with $\beta=1/2$;
\item the model selection-based estimator $\hat{\theta}_{\mathrm{MS}}$.
\end{itemize}
The numerical experiments are conducted using the $p=100$-dimensional truncation.
The noise variance $\varepsilon^2$ is fixed to one and the volume $B^2$ is varied in $\{1,2,3,4,5\}$.
Losses at two parameter values are used for comparison.
The following parameter values are used:
\begin{itemize}
	\item $\theta^{(1)}_{i}:=Bi^{-0.52}/\sqrt{100}$ for $i\in\mathbb{N}$;
	\item $\theta^{(2)}_{1}:=B$ and $\theta^{(2)}_{i}:=0$ for $i\geq 2$.
\end{itemize}
Note that
$\theta^{(1)}$ is included in $\mathcal{E}(\alpha_{0},B)$ for any $0<\alpha_{0}<0.014$
and
is not included in $\mathcal{E}(\alpha_{0},B)$ for any $\alpha_{0}>0.015$.
Note also that
$\theta^{(2)}$ is included in $\mathcal{E}(\alpha_{0},B)$ for any $\alpha_{0}>0$.

\begin{figure}[h]
\begin{center}
\includegraphics[width=0.60\hsize]{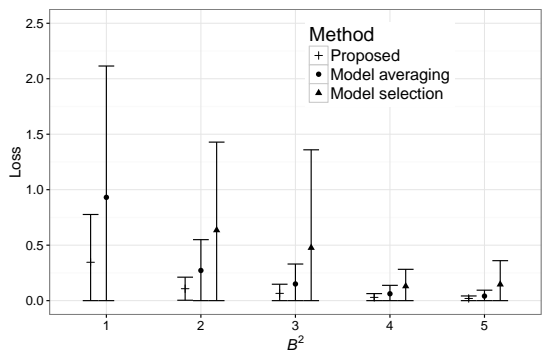}
\caption{
Means of losses with error bars at $\theta=\theta^{(1)}$ in cases with $B^2=1,2,3,4,5$.
The means and error bars of the model selection based estimator at $B^{2}=1$ are omitted because
they are outside the range $[0,2.5]$.
}
\label{Loss_1}
\end{center}
\end{figure}
\begin{figure}[h]
\begin{center}
\includegraphics[width=0.60\hsize]{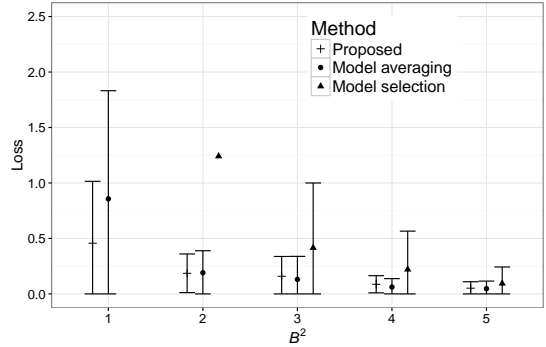}
\caption{
Means of losses with error bars at $\theta=\theta^{(2)}$ in cases with $B^2=1,2,3,4,5$.
The means and error bars of the model selection based estimator at $B^{2}=1$ are omitted because
they are outside the range $[0,2.5]$.
The error bars of the model selection based estimator at $B^{2}=2$ are omitted becasue
the upper bar is outside the range $[0,2.5]$.
}
\label{Loss_2}
\end{center}
\end{figure}

The results are presented in Figures \ref{Loss_1} and \ref{Loss_2}.
At each $B$, means (with standard deviations) of the proposed Bayes estimator $\hat{\theta}_{\Pi}$,
the model averaging estimator $\hat{\theta}_{\mathrm{MA},1/2}$, and 
the model selection based estimator $\hat{\theta}_{\mathrm{MS}}$
are plotted side-by-side.
The proposed estimator $\hat{\theta}_{\Pi}$ is abbreviated by ``Proposed;"
the model averaging estimator $\hat{\theta}_{\mathrm{MA},1/2}$ is abbreviated by ``Model averaging;"
the model selection-based estimator $\hat{\theta}_{\mathrm{MS}}$ is abbreviated by ``Model selection."
In each plot,
the lower limit used in the error bar is calculated as the maximum of zero and the mean minus the standard deviation.
We omit the values outside the range $[0,2.5]$.

These figures indicate that 
the proposed Bayes estimator outperforms the model selection based estimator.
This outperformance does not depend on $\alpha_{0}$ and $B$.
Figure \ref{Loss_1} indicates that
the proposed Bayes estimator outperforms the model averaging estimator,
while
Figure \ref{Loss_2} indicates that
the proposed Bayes estimator underperforms the model averaging estimator.
However, even in Figure \ref{Loss_2},
when $B$ is small,
the performance of the proposed Bayes estimator is comparable (or possibly superior) to 
that of the model averaging estimator.
Compared to the model averaging estimator,
our approach directly puts a prior distribution on the scale of the parameter,
which seems to present the better outcome when $B$ is small.

\section{Proof for Section \ref{sec:nonasymptoticBayesianadaptation}}
\label{sec:details_ss}

The proofs follow the standard arguments in the Bayesian nonparametric literature
\cite{BarronSchervishWasserman(1999),Ghosal_Ghosh_vanderVaart(2000),ShenandWasserman(2001)}.
The essential difference appears in the prior mass condition with respect to $\varepsilon/B$
under which the prior puts a sufficient mass on the neighbors around the true parameter with respect to $\varepsilon/B$;
see Lemma \ref{lem:priormasscondition}.

The organization of this section is as follows.
In Subsection \ref{subsec:lemmas}, we prepare some lemmas to be used.
In Subsection \ref{subsec:mainproof}, we present the proof of Theorem \ref{thm:PosteriorConcentrationwrtB}.
In Subsection \ref{subsec:subproof}, we present the proof of Theorem \ref{thm:AdaptivePosteriorConcentrationwrtB}.

\subsection{Lemmas}
\label{subsec:lemmas}

In this subsection,
we present our lemmas.
The proofs of the lemmas are provided in Appendix \ref{subsec:proofs_lemmas}.
Note that 
\begin{align*}
	&\{\theta: \|\theta-\theta_{0}\|^2 / B^2 \geq C (\varepsilon / B)^{4\alpha_{0}/(2\alpha_{0}+1)} \}
	=
	\{\theta: \|\theta-\theta_{0}\|^2 / \varepsilon^2 \geq C (B/\varepsilon)^{2/(2\alpha_{0}+1)} \}.
\end{align*}

The first lemma provides the essential support of $\mathrm{S}_{M,\alpha}$.
For a constant $c_{1}>0$ and $\theta_{0}\in\mathcal{E}(\alpha_{0},B)$,
let
\begin{align*}
	E_{c_{1}}(\theta_{0}):=
	\left\{\theta\in l_{2}:\mathop{\sum}_{i>\lfloor c_{1}(B/\varepsilon)^{2/(2\alpha_{0}+1)} \rfloor}
		(\theta_{i}-\theta_{0,i})^{2}/\varepsilon^{2}
		\leq (B/\varepsilon)^{2/(2\alpha_{0}+1)}
	\right\}.
\end{align*}
\begin{lem}[Essential support of the prior]
	\label{lem:Essentialsupport}
	For any $\alpha > 0 $ and any $c_{1}>1$,
	the inequality
	\begin{align*}
		\mathrm{S}_{M}
		(E^{\mathrm{c}}_{c_{1}}(\theta_{0}) \mid \alpha )
		\leq \exp\{- \eta (c_{1} -1 )(B/\varepsilon)^{2/(2\alpha_{0}+1)}\}
	\end{align*}
	holds uniformly in $\theta_{0}\in\mathcal{E}(\alpha_{0},B)$.
\end{lem}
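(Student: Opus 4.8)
The plan is to bound the prior mass of the complement event $E_{c_1}^{\mathrm c}(\theta_0)$ by decomposing over the mixing index $d$ and comparing the squared high-frequency tail of $\theta-\theta_0$ to that of $\theta_0$ alone. Under $\mathrm S(\cdot\mid d,\alpha)$, the coordinates $\theta_i$ with $i>d$ are deterministically zero, so for $i>\max\{d,\lfloor c_1(B/\varepsilon)^{2/(2\alpha_0+1)}\rfloor\}$ we have $\theta_i-\theta_{0,i}=-\theta_{0,i}$. Thus, writing $d_\star:=\lfloor c_1(B/\varepsilon)^{2/(2\alpha_0+1)}\rfloor$, the only randomness in $\sum_{i>d_\star}(\theta_i-\theta_{0,i})^2$ comes from the (finitely many) indices $i$ with $d_\star<i\le d$, which is nonempty only when $d>d_\star$. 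So the key observation is:
\begin{align*}
\mathrm S_M(E_{c_1}^{\mathrm c}(\theta_0)\mid\alpha)
=\sum_{d>d_\star}M(d)\,\mathrm S\!\left(\sum_{i>d_\star}(\theta_i-\theta_{0,i})^2/\varepsilon^2>(B/\varepsilon)^{2/(2\alpha_0+1)}\,\Big|\,d,\alpha\right),
\end{align*}
since every term with $d\le d_\star$ contributes zero (the tail sum reduces to $\sum_{i>d_\star}\theta_{0,i}^2/\varepsilon^2$, which is $\le\sum_i i^{2\alpha_0}\theta_{0,i}^2/(d_\star^{2\alpha_0}\varepsilon^2)\le B^2/(d_\star^{2\alpha_0}\varepsilon^2)\le(B/\varepsilon)^{2/(2\alpha_0+1)}$ because $c_1>1$ and $d_\star\ge(B/\varepsilon)^{2/(2\alpha_0+1)}/2$ roughly — the constant bookkeeping here is routine and uses $\alpha_0$).

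Next I would control each surviving summand. For $d>d_\star$, the relevant partial sum $\sum_{d_\star<i\le d}(\theta_i-\theta_{0,i})^2$ is at most $2\sum_{d_\star<i\le d}\theta_i^2+2\sum_{d_\star<i\le d}\theta_{0,i}^2$; the second piece is again absorbed as above, so it suffices to bound the probability that $\sum_{d_\star<i\le d}\theta_i^2/\varepsilon^2$ exceeds a constant multiple of $(B/\varepsilon)^{2/(2\alpha_0+1)}$. Under $\mathrm S(\cdot\mid d,\alpha)$, $\theta_i=\varepsilon\,d^{(2\alpha+1)/2}i^{-(2\alpha+1)/2}N_i$ with $N_i$ i.i.d.\ standard normal, so $\sum_{d_\star<i\le d}\theta_i^2/\varepsilon^2=d^{2\alpha+1}\sum_{d_\star<i\le d}i^{-(2\alpha+1)}N_i^2$. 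The expectation of this quadratic form is $d^{2\alpha+1}\sum_{d_\star<i\le d}i^{-(2\alpha+1)}\le d^{2\alpha+1}\cdot C_\alpha d_\star^{-2\alpha}$, and crucially this is \emph{not} small when $d\gg d_\star$ — so a naive mean bound fails. Instead I use a Chernoff/Laplace-transform bound for the $\chi^2$-type sum: for a suitable $\lambda>0$ small enough that $2\lambda d^{2\alpha+1}i^{-(2\alpha+1)}<1$ for all $i>d_\star$ (which holds whenever $2\lambda d^{2\alpha+1}<d_\star^{2\alpha+1}$), one gets an exponential tail. This suggests choosing the threshold and $\lambda$ so that the exponent beats $d$, and then the sum over $d>d_\star$ converges geometrically against $M(d)\propto e^{-\eta d}$; one extracts the stated bound $\exp\{-\eta(c_1-1)(B/\varepsilon)^{2/(2\alpha_0+1)}\}$ from the leading term $d=d_\star+1$, using $M(d_\star+1)\le e^{-\eta d_\star}\le e^{-\eta(c_1-1)(B/\varepsilon)^{2/(2\alpha_0+1)}}$ after the arithmetic relating $d_\star$ to $c_1(B/\varepsilon)^{2/(2\alpha_0+1)}$.

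\textbf{Main obstacle.} The delicate point is the tail bound for $d^{2\alpha+1}\sum_{d_\star<i\le d}i^{-(2\alpha+1)}N_i^2$ \emph{uniformly in $d$}: when $d$ is very large the coefficients $d^{2\alpha+1}i^{-(2\alpha+1)}$ for $i$ just above $d_\star$ blow up, so the quadratic form has heavy-tailed-looking individual terms and one must be careful that the Laplace transform argument still closes. I expect the resolution is that although the mean grows like $(d/d_\star)^{2\alpha}d_\star$, the probability of the sum \emph{staying below} the fixed threshold $(B/\varepsilon)^{2/(2\alpha_0+1)}\asymp d_\star$ is then essentially zero for large $d$ — wait, that is the wrong direction; rather, what we need is that the \emph{mass} $M(d)$ on such large $d$ already decays faster than any issue the probability can cause, i.e.\ the probability is trivially $\le1$ and $\sum_{d>d_\star}M(d)=O(e^{-\eta d_\star})$ directly. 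So the argument splits: for $d$ moderately larger than $d_\star$ use the Chernoff bound on the quadratic form to get an extra factor, and for $d$ far above $d_\star$ simply use the geometric decay of $M$; in both regimes the final bound is dominated by $e^{-\eta(c_1-1)(B/\varepsilon)^{2/(2\alpha_0+1)}}$. The bookkeeping tying $c_1$, $\eta$, $d_\star$ and $\alpha_0$ together is the only real work.
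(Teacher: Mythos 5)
Your proposal is correct and, once the dust settles, is exactly the paper's argument: the terms with $d\le\lfloor c_1(B/\varepsilon)^{2/(2\alpha_0+1)}\rfloor$ vanish because $\mathrm S(\cdot\mid d,\alpha)$ puts $\theta_i=0$ for $i>d$ and the deterministic tail $\sum_{i>\bar D}\theta_{0,i}^2/\varepsilon^2\le c_1^{-2\alpha_0}(B/\varepsilon)^{2/(2\alpha_0+1)}$ falls below the threshold, while the terms with $d>\bar D$ are handled by bounding each probability by one and summing the geometric tail of $M$. The entire Chernoff/Laplace-transform discussion of the $\chi^2$ quadratic form is unnecessary --- as you yourself realize in the final paragraph, $\sum_{d>\bar D}M(d)\le e^{\eta}e^{-\eta c_1(B/\varepsilon)^{2/(2\alpha_0+1)}}$ already gives the stated bound for \emph{all} $d>\bar D$, so no splitting into regimes is needed.
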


The second and third lemmas provide the complexity of the interest space and the existence of test sequences.
For a positive integer $C>0$ and a constant $c_{1}>0$,
we divide $\{\theta\in l_{2}:\|\theta-\theta_{0}\|^{2}/\varepsilon^{2}\geq C(B/\varepsilon)^{2/(2\alpha_{0}+1)}\}$
as
\begin{align*}
	\{\theta:& ||\theta-\theta_{0}||^{2} / \varepsilon^{2}  \geq C ( B / \varepsilon)^{2/(2\alpha_{0}+1)}\} \nonumber\\
&= \mathop{\cup}_{j=C}^{\infty}R(j;c_{1})
	\cup
	[\{\theta: ||\theta-\theta_{0}||^{2} / \varepsilon^{2} \geq C ( B / \varepsilon )^{2/(2\alpha_{0}+1)}\}\cap E^{\mathrm{c}}_{c_{1}}(\theta_{0})],
\end{align*}
where for $j=C,C+1,\ldots$,
\begin{align*}
R(j;c_{1})
:=\{\theta\in E_{c_{1}}(\theta_{0}): (j+1)( B / \varepsilon)^{2/(2\alpha_{0}+1)}
	> ||\theta-\theta_{0}||^{2} / \varepsilon^{2} \geq 
	j ( B / \varepsilon)^{2/(2\alpha_{0}+1)} \}.
\end{align*}
For $j=C,C+1,\ldots$,
let $N(j;c_{1})$ be the $(\varepsilon/8)\sqrt{j ( B / \varepsilon )^{2/(2\alpha_{0}+1)}}$-covering number
with respect to $||\cdot||$ of $R(j;c_{1})$.
\begin{lem}[Covering number of $R(j;c_{1})$; cf.~Proposition A.1. in \cite{GaoandZhou(2016)}]
\label{lem:coveringnumber}
For each $j=C,C+1,\ldots$ and every $c_{1}>0$,
$\log(N(j;c_{1}))$ is bounded above by $2c_{1}(B/\varepsilon)^{2/(2\alpha_{0}+1)}$.
\end{lem}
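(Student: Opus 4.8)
The plan is to reduce this infinite-dimensional covering problem to a volumetric estimate in a Euclidean space of dimension $m:=\lfloor c_{1}(B/\varepsilon)^{2/(2\alpha_{0}+1)}\rfloor$. Write $\rho:=(B/\varepsilon)^{2/(2\alpha_{0}+1)}$, split every sequence into its head block (coordinates $1,\dots,m$) and its tail block (coordinates $>m$), and let $P_{1},P_{2}$ be the corresponding orthogonal projections on $l_{2}$. First I would record that each $\theta\in R(j;c_{1})$ has small tail, $\|P_{2}(\theta-\theta_{0})\|^{2}\leq\varepsilon^{2}\rho$, which is precisely the membership $\theta\in E_{c_{1}}(\theta_{0})$; and that, by orthogonality together with the upper bound $\|\theta-\theta_{0}\|^{2}<(j+1)\varepsilon^{2}\rho$ defining $R(j;c_{1})$, the head satisfies $\|P_{1}(\theta-\theta_{0})\|^{2}=\|\theta-\theta_{0}\|^{2}-\|P_{2}(\theta-\theta_{0})\|^{2}<(j+1)\varepsilon^{2}\rho$. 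Hence $R(j;c_{1})\subseteq\theta_{0}+(\mathcal{B}_{1}\times\mathcal{B}_{2})$, where $\mathcal{B}_{1}$ is the Euclidean ball of radius $\varepsilon\sqrt{(j+1)\rho}$ in the $m$-dimensional head space and $\mathcal{B}_{2}$ the ball of radius $\varepsilon\sqrt{\rho}$ in the tail space.

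Next I would take a $\delta$-net $h_{1},\dots,h_{K}$ of $\mathcal{B}_{1}$ in $\mathbb{R}^{m}$ — whose cardinality is at most $(1+2\varepsilon\sqrt{(j+1)\rho}/\delta)^{m}$ by the standard volume bound — and claim that the points $\theta_{0}+(h_{k},0)$ form an $(\varepsilon/8)\sqrt{j\rho}$-net of $R(j;c_{1})$. Indeed, for $\theta\in R(j;c_{1})$, choosing $h_{k}$ within $\delta$ of $P_{1}(\theta-\theta_{0})$ gives squared distance $\|P_{1}(\theta-\theta_{0})-h_{k}\|^{2}+\|P_{2}(\theta-\theta_{0})\|^{2}\leq\delta^{2}+\varepsilon^{2}\rho$ from $\theta$ to $\theta_{0}+(h_{k},0)$, and one picks $\delta$ so that $\delta^{2}+\varepsilon^{2}\rho\leq\varepsilon^{2}j\rho/64$, which is feasible because $j\geq C$ and $C$ is taken large (here one needs $C>64$, implicit in the way $C$ is fixed in the main proofs). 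For such $\delta$ the ratio $\varepsilon\sqrt{(j+1)\rho}/\delta$ stays bounded uniformly in $j\geq C$ (it tends to $8$ as $j$ grows), so $\log N(j;c_{1})\leq m\log(1+2\varepsilon\sqrt{(j+1)\rho}/\delta)$ is an absolute multiple of $m\leq c_{1}\rho$ and, crucially, does not grow with $j$. Bringing the leading constant down to the stated value $2$ is then a matter of careful bookkeeping: in the head coordinates $R(j;c_{1})$ is in fact confined to the thin annulus $\varepsilon\sqrt{(j-1)\rho}\leq\|\cdot\|<\varepsilon\sqrt{(j+1)\rho}$, so a sphere-covering estimate rather than a ball-covering one applies, and $C$ may be enlarged as needed — this is the content of the argument behind Proposition~A.1 in \cite{GaoandZhou(2016)}.

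The main obstacle is the tail block $\mathcal{B}_{2}$: as an infinite-dimensional ball it is not totally bounded, hence admits no finite net at any radius below its own — so it cannot be covered at all and must instead be absorbed entirely into the target radius. This is exactly why the statement is restricted to large $j$: one needs $(\varepsilon/8)\sqrt{j\rho}>\varepsilon\sqrt{\rho}$ so that, after accounting for the tail, there is still room left for the head-net error. It is also what makes the bound uniform in $j$: both the diameter of the set being covered and the target radius scale like $\sqrt{j\rho}$, so the per-coordinate cost of the covering stays bounded as $j\to\infty$. Everything else — the volume bound for Euclidean nets, the arithmetic of the radii, and the final optimization of the constant — is routine.
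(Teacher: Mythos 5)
Your decomposition into an $m$-dimensional head ($m=\lfloor c_{1}\rho\rfloor$, $\rho=(B/\varepsilon)^{2/(2\alpha_{0}+1)}$) and a tail of squared length at most $\varepsilon^{2}\rho$, followed by a volumetric net of the head ball, is exactly the right strategy; note that the paper itself supplies no proof of this lemma and simply defers to Proposition~A.1 of \cite{GaoandZhou(2016)}, so your argument is filling a real gap in the exposition, and everything up to and including the bound $\log N(j;c_{1})\leq m\log\bigl(1+16\sqrt{(j+1)/(j-64)}\bigr)$ is correct (including the observation that one needs $j>64$, which is guaranteed by the choice of $C$ in (\ref{condition:C})).

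The genuine gap is the last step, where you promise to lower the per-coordinate constant from roughly $\log 17\approx 2.83$ to the stated value $2$ by ``careful bookkeeping'' and a sphere-covering refinement. That step cannot be carried out: a volume comparison gives a matching \emph{lower} bound. Take $\theta_{0}=0$; then $R(j;c_{1})$ contains the full $m$-dimensional annulus $\{\theta:\theta_{i}=0\ \forall i>m,\ j\varepsilon^{2}\rho\leq\|\theta\|^{2}<(j+1)\varepsilon^{2}\rho\}$, and any cover of it by balls of radius $(\varepsilon/8)\sqrt{j\rho}$ must use at least
$8^{m}\bigl[(1+1/j)^{m/2}-1\bigr]$
balls, so for $j$ of order $C$ and $m$ large one has $\log N(j;c_{1})\geq m\log 8\approx 2.079\,m$, which exceeds $2c_{1}\rho$ when $m=c_{1}\rho$. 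In other words, the constant $2$ in the statement is itself unattainable for this covering radius, and neither restricting to the thin annulus nor enlarging $C$ repairs it, since the obstruction is the ratio (outer radius)/(covering radius) $\approx 8$, which is independent of $j$ and $C$. You should therefore stop at the honest conclusion of your own computation, namely $\log N(j;c_{1})\leq K c_{1}\rho$ for an absolute constant $K$ (e.g.\ $K=3$ once $C\geq 256$). This is harmless for the paper: the lemma is used only in (\ref{eq:bound_1}) and (\ref{eq:bound_2}) through a bound of the form $N(j;c_{1})\leq\exp(Kc_{1}\rho)$, and replacing $2c_{1}$ by $Kc_{1}$ merely changes the thresholds in (\ref{condition:C}). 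But as written, both the lemma's constant and your claim to reach it are incorrect, and the claimed route to it should be deleted rather than left as an unverified appeal to the reference.
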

\begin{lem}[Existence of test sequences; cf.~Lemma 5 in \cite{GhosalandvanderVaart(2007)}]
\label{lem:testcondition}
	Let $j$ be any positive integer.
	Let $\theta_{0}$ be in $\mathcal{E}(\alpha_{0},B)$.
	Let $\bar{\theta}_{(j)}$ be any $l_{2}$-vector such that $||\bar{\theta}_{(j)}-\theta_{0}||^{2}/\varepsilon^{2} \geq j (B/\varepsilon)^{2/(2\alpha_{0}+1)}$.
	Let $\psi_{(j)}(X):=1_{||X-\bar{\theta}_{(j)}||^{2}<||X-\theta_{0}||^{2}}$.
	Then,
	the inequalities
	\begin{align*}
		\mathrm{E}_{\theta_{0},\varepsilon^{2}}[\psi_{(j)}(X)] \leq \exp \{-(j/8)(B/\varepsilon)^{2/(2\alpha_{0}+1)}\}
	\end{align*}
	and
	\begin{align*}
		\mathop{\sup}_{\theta: \|\theta-\bar{\theta}_{(j)}\| \leq \|\bar{\theta}_{(j)}-\theta_{0}\|/4 }
		\mathrm{E}_{\theta,\varepsilon^{2}}[1-\psi_{(j)}(X)]\leq 
		\exp\{-(j/32)(B/\varepsilon)^{2/(2\alpha_{0}+1)} \}
	\end{align*}
	hold.
\end{lem}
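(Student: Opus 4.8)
The plan is to use the classical minimum-distance test construction for Gaussian shift experiments and to reduce both error bounds to a single one-dimensional Gaussian tail estimate. Write $u_{(j)}:=\bar{\theta}_{(j)}-\theta_{0}$ and $r_{(j)}:=\|u_{(j)}\|$, so that the hypothesis reads $r_{(j)}^{2}/\varepsilon^{2}\geq j(B/\varepsilon)^{2/(2\alpha_{0}+1)}$. The first step is the algebraic identity obtained by expanding the squared norms: $\{\|X-\bar{\theta}_{(j)}\|^{2}<\|X-\theta_{0}\|^{2}\}=\{\langle X-\theta_{0},u_{(j)}\rangle>r_{(j)}^{2}/2\}$, so the test $\psi_{(j)}$ depends on $X$ only through the real-valued statistic $\langle X-\theta_{0},u_{(j)}\rangle$. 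Since $X-\theta=\varepsilon Z$ with $Z\sim\otimes_{i}\mathcal{N}(0,1)$ under $P_{\theta,\varepsilon^{2}}$, this statistic is Gaussian with variance $\varepsilon^{2}r_{(j)}^{2}$ and mean $\langle\theta-\theta_{0},u_{(j)}\rangle$; everything then follows from the bound $\mathrm{Pr}(\mathcal{N}(0,1)>t)\leq e^{-t^{2}/2}$.

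For the first inequality, under $P_{\theta_{0},\varepsilon^{2}}$ the statistic has mean zero, hence $\mathrm{E}_{\theta_{0},\varepsilon^{2}}[\psi_{(j)}(X)]=\mathrm{Pr}(\mathcal{N}(0,1)>r_{(j)}/(2\varepsilon))\leq\exp\{-r_{(j)}^{2}/(8\varepsilon^{2})\}$, and the hypothesis on $r_{(j)}$ gives the stated bound $\exp\{-(j/8)(B/\varepsilon)^{2/(2\alpha_{0}+1)}\}$.

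For the second inequality, fix $\theta$ with $\|\theta-\bar{\theta}_{(j)}\|\leq r_{(j)}/4$. First I would lower-bound the mean: writing $\theta-\theta_{0}=(\theta-\bar{\theta}_{(j)})+u_{(j)}$ and applying Cauchy--Schwarz, $\langle\theta-\theta_{0},u_{(j)}\rangle=r_{(j)}^{2}+\langle\theta-\bar{\theta}_{(j)},u_{(j)}\rangle\geq r_{(j)}^{2}-\|\theta-\bar{\theta}_{(j)}\|\,r_{(j)}\geq (3/4)r_{(j)}^{2}$. On the event $\{1-\psi_{(j)}(X)=1\}=\{\langle X-\theta_{0},u_{(j)}\rangle\leq r_{(j)}^{2}/2\}$ the centered Gaussian $\langle X-\theta_{0},u_{(j)}\rangle-\langle\theta-\theta_{0},u_{(j)}\rangle$ is therefore at most $r_{(j)}^{2}/2-(3/4)r_{(j)}^{2}=-r_{(j)}^{2}/4$, so $\mathrm{E}_{\theta,\varepsilon^{2}}[1-\psi_{(j)}(X)]\leq\mathrm{Pr}(\mathcal{N}(0,1)>r_{(j)}/(4\varepsilon))\leq\exp\{-r_{(j)}^{2}/(32\varepsilon^{2})\}$, which is at most $\exp\{-(j/32)(B/\varepsilon)^{2/(2\alpha_{0}+1)}\}$ uniformly over the ball; taking the supremum over $\theta$ completes the argument.

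There is no genuine obstacle here: the proof is entirely the Gaussian shift computation, and the only point demanding care is the bookkeeping of constants. Concretely, it is the choice of radius $r_{(j)}/4$ for the alternative ball that leaves a margin of $3/4$ in the Cauchy--Schwarz step, which in turn fixes the exponent $1/32$ in the second bound; a larger ball would shrink this margin and weaken the exponential control, so the radius and the exponent have to be tracked together.
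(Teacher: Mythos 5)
Your proof is correct and is exactly the standard Gaussian minimum-distance test computation (reduce the test to the one-dimensional statistic $\langle X-\theta_{0},\bar{\theta}_{(j)}-\theta_{0}\rangle$ and apply the tail bound $\mathrm{Pr}(N>t)\leq e^{-t^{2}/2}$), which is precisely the argument of Lemma 5 in Ghosal and van der Vaart (2007) that the paper cites in lieu of its own proof. The constant bookkeeping ($3/4$ margin from the radius $r_{(j)}/4$, hence the exponents $1/8$ and $1/32$) checks out.
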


The fourth lemma is the prior mass condition.
\begin{lem}[Prior mass condition]
	\label{lem:priormasscondition}
	Assume that $\alpha\geq \alpha_{0}-1/2$.
	There exists a positive constant $c_{2}$ depending only on $\alpha_{0}$ and $\eta$ of $M$ for which
	the inequality
	\begin{align*}
		\mathrm{S}_{M}(\theta: \|\theta-\theta_{0}\|^{2} / \varepsilon^{2} \leq 2( B / \varepsilon)^{2/(2\alpha_{0}+1)} \mid \alpha)
		\geq \exp\{-c_{2}(B/\varepsilon)^{2/(2\alpha_{0}+1)}\}
	\end{align*}
	holds uniformly in $\theta_{0} \in \mathcal{E}(\alpha_{0},B)$,
	provided that $\varepsilon/B$ is smaller than one.
\end{lem}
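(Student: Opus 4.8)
The plan is to exhibit a single pair $(d,k)$ — or a small collection thereof — for which the conditional prior $\mathrm{S}(\cdot\mid d,\alpha=k)$ (which appears in $\mathrm{S}_M(\cdot\mid\alpha)$ with weight $M(d)$; note the mixture over $d$ only here, with $\alpha$ fixed) concentrates mass $\exp\{-O((B/\varepsilon)^{2/(2\alpha_0+1)})\}$ on the set $\{\|\theta-\theta_0\|^2/\varepsilon^2\le 2(B/\varepsilon)^{2/(2\alpha_0+1)}\}$, uniformly over $\theta_0\in\mathcal{E}(\alpha_0,B)$. The natural choice is the oracle truncation level $d\asymp (B/\varepsilon)^{1/(2\alpha_0+1)}$; write $d^*:=\lfloor c_0 (B/\varepsilon)^{1/(2\alpha_0+1)}\rfloor$ for a constant $c_0$ to be fixed. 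Then $M(d^*)=\exp\{-\eta d^*\}/\sum_d\exp\{-\eta d\}\ge \exp\{-c\,(B/\varepsilon)^{1/(2\alpha_0+1)}\}$, which is already of the required (indeed better) order, so the whole difficulty is the conditional small-ball probability $\mathrm{S}(\{\|\theta-\theta_0\|^2/\varepsilon^2\le 2(B/\varepsilon)^{2/(2\alpha_0+1)}\}\mid d^*,k)$.

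The key steps, in order. First, split $\|\theta-\theta_0\|^2=\sum_{i\le d^*}(\theta_i-\theta_{0,i})^2+\sum_{i>d^*}\theta_{0,i}^2$ (the tail of $\theta$ is deterministically zero under $\mathrm{S}(\cdot\mid d^*,k)$), and bound the bias term: since $\theta_0\in\mathcal{E}(\alpha_0,B)$, $\sum_{i>d^*}\theta_{0,i}^2\le (d^*)^{-2\alpha_0}\sum_i i^{2\alpha_0}\theta_{0,i}^2\le B^2 (d^*)^{-2\alpha_0}\le \tfrac12\varepsilon^2(B/\varepsilon)^{2/(2\alpha_0+1)}$ once $c_0$ is large enough. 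Thus it suffices to show $\mathrm{S}(\sum_{i\le d^*}(\theta_i-\theta_{0,i})^2\le \varepsilon^2(B/\varepsilon)^{2/(2\alpha_0+1)}\mid d^*,k)\ge \exp\{-c_2'(B/\varepsilon)^{2/(2\alpha_0+1)}\}$. Second, under $\mathrm{S}(\cdot\mid d^*,k)$ the coordinates $\theta_i$, $i\le d^*$, are independent $\mathcal{N}(0,\varepsilon^2 (d^*)^{2k+1} i^{-(2k+1)})$; center at $\theta_{0,i}$ and absorb the shift via the standard shifted-Gaussian small-ball inequality $\mathrm{Pr}(\|Z+a\|\le r)\ge e^{-\|a\|^2_{\mathbb H}/2}\,\mathrm{Pr}(\|Z\|\le r)$ in the relevant RKHS norm, or more elementarily coordinatewise. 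The cost of the shift is controlled by $\sum_{i\le d^*}\theta_{0,i}^2 i^{2k+1}/(\varepsilon^2 (d^*)^{2k+1})\le (d^*)^{-2k-1}\sum_i i^{2k+1}\theta_{0,i}^2/\varepsilon^2$; here I use $k\ge\alpha_0-1/2$, i.e. $2k+1\ge 2\alpha_0$, so $\sum_i i^{2k+1}\theta_{0,i}^2\le (d^*)^{2k+1-2\alpha_0}\sum_i i^{2\alpha_0}\theta_{0,i}^2\le (d^*)^{2k+1-2\alpha_0}B^2$ (using that $\theta_{0,i}=0$ effectively beyond the ellipsoid scale, or bounding the sum by its value times the worst index $d^*$) — this is precisely where the hypothesis $\alpha\ge\alpha_0-1/2$ enters, and is the crux of the argument. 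This yields a shift cost $\lesssim (B/\varepsilon)^{2/(2\alpha_0+1)}$. Third, bound the centered small-ball probability $\mathrm{Pr}(\sum_{i\le d^*}\varepsilon^2(d^*)^{2k+1}i^{-(2k+1)}N_i^2\le \varepsilon^2(B/\varepsilon)^{2/(2\alpha_0+1)})$ with $N_i$ i.i.d. standard normal: the seminorm $\sum_{i\le d^*}(d^*)^{2k+1}i^{-(2k+1)}N_i^2$ has mean $\asymp d^*\asymp (B/\varepsilon)^{1/(2\alpha_0+1)}$ (the weights sum to $(d^*)^{2k+1}\sum_{i\le d^*}i^{-(2k+1)}\asymp d^*$), and a Chernoff/Laplace-transform bound on the lower tail of a weighted chi-square gives probability at least $\exp\{-c\,d^*\}\ge\exp\{-c\,(B/\varepsilon)^{2/(2\alpha_0+1)}\}$ of being below a constant multiple of $(B/\varepsilon)^{2/(2\alpha_0+1)}$ (which dominates its mean when $B/\varepsilon\ge1$). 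This is the stochastic-behavior-of-the-seminorm observation flagged in the introduction.

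Combining: $\mathrm{S}_M(\{\cdots\}\mid\alpha)\ge M(d^*)\cdot(\text{shift factor})\cdot(\text{centered small ball})\ge \exp\{-c_2(B/\varepsilon)^{2/(2\alpha_0+1)}\}$ for a constant $c_2$ depending only on $\alpha_0$ and $\eta$, uniformly in $\theta_0$, as claimed. I expect the main obstacle to be the second step — controlling the RKHS-norm cost of recentering the Gaussian at $\theta_0$ — because it requires carefully exploiting $2k+1\ge2\alpha_0$ together with membership in $\mathcal{E}(\alpha_0,B)$ to keep $\sum_i i^{2k+1}\theta_{0,i}^2$ from blowing up with $k$; the chi-square lower-tail estimate in the third step is routine, and the first and fourth steps are bookkeeping.
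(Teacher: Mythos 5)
Your overall architecture matches the paper's: restrict the mixture to a single dimension $d$, pay the prior weight $M(d)$, recenter the Gaussian at $\theta_0$ via the shift inequality (and this is indeed exactly where $2\alpha+1\geq 2\alpha_0$ enters), then bound the centered small-ball probability. The gap is in the choice of $d$, and it is fatal as written. You take $d^*\asymp(B/\varepsilon)^{1/(2\alpha_0+1)}$, but the correct truncation level --- obtained by balancing $d\varepsilon^2$ against $B^2d^{-2\alpha_0}$, i.e.\ $d^{2\alpha_0+1}=(B/\varepsilon)^2$ --- is $(B/\varepsilon)^{2/(2\alpha_0+1)}$. With your $d^*$, the truncation bias satisfies only $\sum_{i>d^*}\theta_{0,i}^2/\varepsilon^2\leq c_0^{-2\alpha_0}(B/\varepsilon)^{(2\alpha_0+2)/(2\alpha_0+1)}$, which exceeds the target radius $(B/\varepsilon)^{2/(2\alpha_0+1)}$ by the factor $(B/\varepsilon)^{2\alpha_0/(2\alpha_0+1)}$; no constant $c_0$ can absorb a growing power of $B/\varepsilon$, so the claim ``once $c_0$ is large enough'' fails. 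The failure is not an artifact of a loose bound: for $\theta_0$ supported on the single coordinate $d^*+1$ with $\theta_{0,d^*+1}=B(d^*+1)^{-\alpha_0}$ (which lies in $\mathcal{E}(\alpha_0,B)$), every $\theta$ in the support of $\mathrm{S}(\cdot\mid d^*,\alpha)$ has $\|\theta-\theta_0\|^2/\varepsilon^2\gtrsim(B/\varepsilon)^{(2\alpha_0+2)/(2\alpha_0+1)}\gg(B/\varepsilon)^{2/(2\alpha_0+1)}$, so the ball you need has prior mass exactly zero under your chosen component and the uniform statement is lost. The same power deficit hits your recentering cost: $(d^*)^{-2\alpha_0}(B/\varepsilon)^2=(B/\varepsilon)^{(2\alpha_0+2)/(2\alpha_0+1)}$, not $\lesssim(B/\varepsilon)^{2/(2\alpha_0+1)}$. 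The paper instead takes $d=T=\lfloor(B/\varepsilon)^{2/(2\alpha_0+1)}\rfloor$, for which $M(T)\geq\exp\{-(\eta+1)T\}$, the bias is $\leq T$, the recentering cost is $T^{-2\alpha_0}(B/\varepsilon)^2=T$, and the centered small-ball exponent is $c_3T$ --- all exactly on budget. Your remark that $M(d^*)$ is ``better than required'' is in fact the telltale sign that $d^*$ was chosen too small: the exponential prior on $d$ permits $d$ as large as $(B/\varepsilon)^{2/(2\alpha_0+1)}$, and that room must be used.

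A secondary inaccuracy: the seminorm $\sum_{i\leq d}(d/i)^{2k+1}N_i^2$ has mean of order $d^{2k+1}$ (the series $\sum_i i^{-(2k+1)}$ converges for $k>0$), not of order $d$, so the target level does not dominate the mean and a routine Chernoff lower-tail bound for the weighted chi-square does not apply as you state it. The paper proves the required estimate (its Lemma \ref{lem:smallballestimate}) by symmetrizing to remove the shift (the $\cosh\geq1$ trick, which recovers your factor $\exp\{-\sum_i i^{2\alpha+1}v_i^2/2\}$) and then lower-bounding the Gaussian measure of the ellipsoid $\{\sum_i i^{-2\alpha-1}x_i^2\leq d^{-2\alpha}\}$ by an explicit volume computation with Stirling's formula, which yields the needed $\exp\{-c_3d\}$. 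With $d$ corrected to $T$ and the small-ball step done this way, your outline becomes the paper's proof.
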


The fifth lemma ensures a high probability set on which the likelihood ratio of the marginal distribution and the true distribution is bounded below.
We denote the restriction of $\mathrm{S}_{M}(\cdot\mid\alpha)$ 
onto $\{\theta:\|\theta-\theta_{0}\|^{2}/\varepsilon^{2}\leq 2(B/\varepsilon)^{2/(2\alpha_{0}+1)}\}$
by $\widetilde{\mathrm{S}}_{M}(\cdot\mid\alpha)$:
\begin{align*}
	\widetilde{\mathrm{S}}_{M}(A\mid\alpha):=\frac{\mathrm{S}_{M}(A\mid\alpha)}
	{\mathrm{S}_{M} (\{\theta:\|\theta-\theta_{0}\|^{2}/\varepsilon^{2}\leq 2(B/\varepsilon)^{2/(2\alpha_{0}+1)} \} \mid\alpha)}
\end{align*}
for a Borel set $A$ in $l_{2}\cap \{\theta:\|\theta-\theta_{0}\|^{2}/\varepsilon^{2}\leq 2(B/\varepsilon)^{2/(2\alpha_{0}+1)}\}$.
Let 
\begin{align*}
	H(\theta_{0}):=\bigg{\{}X:\log \int \frac{\mathrm{d}P_{\theta,\varepsilon^{2}}}{\mathrm{d}P_{\theta_{0},\varepsilon^{2}}}(X)\mathrm{d}\widetilde{\mathrm{S}}_{M}(\theta\mid\alpha) 
	\geq -2 ( B / \varepsilon)^{2/(2\alpha_{0}+1)}\bigg{\}}.
\end{align*}

\begin{lem}
	\label{lem:highprobabilityset}
	For every $\theta_{0}\in\mathcal{E}(\alpha_{0},B)$,
	the inequality
	\begin{align*}
		\mathrm{E}_{\theta_{0},\varepsilon^{2}}[1_{H^{\mathrm{c}}(\theta_{0})}(X)]
		\leq \exp\{-(1/2)( B / \varepsilon)^{2/(2\alpha_{0}+1)}\}
	\end{align*}
	holds.
\end{lem}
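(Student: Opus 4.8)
The plan is to control, for $\theta_{0}\in\mathcal{E}(\alpha_{0},B)$, the probability under $P_{\theta_{0},\varepsilon^{2}}$ of the event $H^{\mathrm{c}}(\theta_{0})$ on which the integrated log-likelihood ratio is too small. First I would write the log-likelihood ratio explicitly: in the Gaussian sequence model, for $\theta,\theta_{0}$ in the finite-dimensional slab $\{\|\theta-\theta_{0}\|^{2}/\varepsilon^{2}\le 2(B/\varepsilon)^{2/(2\alpha_{0}+1)}\}$ (so the difference lives in $l_2$ and only finitely many coordinates differ, since $\mathrm{S}_M(\cdot\mid\alpha)$ is supported on sequences with finitely many nonzero entries), one has
\begin{align*}
	\log\frac{\mathrm{d}P_{\theta,\varepsilon^{2}}}{\mathrm{d}P_{\theta_{0},\varepsilon^{2}}}(X)
	=\frac{1}{\varepsilon^{2}}\langle X-\theta_{0},\theta-\theta_{0}\rangle-\frac{1}{2\varepsilon^{2}}\|\theta-\theta_{0}\|^{2}.
\end{align*}
Applying Jensen's inequality to push the $\widetilde{\mathrm{S}}_M(\cdot\mid\alpha)$-integral inside the exponential (or rather: lower-bounding $\log\int(\cdots)$ by $\int\log(\cdots)$), the integrated log-likelihood ratio is at least
\begin{align*}
	\frac{1}{\varepsilon^{2}}\Big\langle X-\theta_{0},\ \int(\theta-\theta_{0})\,\mathrm{d}\widetilde{\mathrm{S}}_M(\theta\mid\alpha)\Big\rangle
	-\frac{1}{2\varepsilon^{2}}\int\|\theta-\theta_{0}\|^{2}\,\mathrm{d}\widetilde{\mathrm{S}}_M(\theta\mid\alpha).
\end{align*}

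The second term is bounded deterministically: on the slab $\|\theta-\theta_{0}\|^{2}/\varepsilon^{2}\le 2(B/\varepsilon)^{2/(2\alpha_{0}+1)}$, the integral is at most $2\varepsilon^{2}(B/\varepsilon)^{2/(2\alpha_{0}+1)}$, so this term contributes at least $-(B/\varepsilon)^{2/(2\alpha_{0}+1)}$. For the first (random, mean-zero) term, write $m:=\int(\theta-\theta_{0})\,\mathrm{d}\widetilde{\mathrm{S}}_M(\theta\mid\alpha)$, which by the same slab bound (and Jensen, $\|m\|^2\le\int\|\theta-\theta_0\|^2$) satisfies $\|m\|^{2}\le 2\varepsilon^{2}(B/\varepsilon)^{2/(2\alpha_{0}+1)}$. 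Then $\varepsilon^{-2}\langle X-\theta_{0},m\rangle$ is, under $P_{\theta_{0},\varepsilon^{2}}$, a centered Gaussian with variance $\|m\|^{2}/\varepsilon^{2}\le 2(B/\varepsilon)^{2/(2\alpha_{0}+1)}$. Hence $H^{\mathrm{c}}(\theta_{0})$ is contained in the event that this Gaussian is below $-(B/\varepsilon)^{2/(2\alpha_{0}+1)}$, and a standard Gaussian tail bound $\mathrm{Pr}(Z\le -t)\le\exp(-t^{2}/(2\sigma^{2}))$ with $t=(B/\varepsilon)^{2/(2\alpha_{0}+1)}$ and $\sigma^{2}\le 2(B/\varepsilon)^{2/(2\alpha_{0}+1)}$ gives
\begin{align*}
	\mathrm{E}_{\theta_{0},\varepsilon^{2}}[1_{H^{\mathrm{c}}(\theta_{0})}(X)]
	\le\exp\!\Big\{-\tfrac{1}{4}(B/\varepsilon)^{2/(2\alpha_{0}+1)}\Big\},
\end{align*}
which is even slightly stronger than the claimed $\exp\{-(1/2)(B/\varepsilon)^{2/(2\alpha_{0}+1)}\}$; a more careful split of the ``budget'' $-2(B/\varepsilon)^{2/(2\alpha_{0}+1)}$ between the deterministic and Gaussian parts recovers the stated constant exactly, and one should check the edge cases in the constant bookkeeping.

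I expect the only real subtlety to be making the Jensen step and the finite-dimensionality rigorous: since $\widetilde{\mathrm{S}}_M(\cdot\mid\alpha)$ is a mixture (over $d$) of measures each supported on a distinct finite-dimensional subspace, one must verify that $\int(\theta-\theta_0)\,\mathrm{d}\widetilde{\mathrm{S}}_M$ is a well-defined $l_2$ element with the claimed norm bound — this follows because the restriction to the slab only sharpens the bound $\int\|\theta-\theta_0\|^2\,\mathrm{d}\widetilde{\mathrm{S}}_M\le 2\varepsilon^2(B/\varepsilon)^{2/(2\alpha_0+1)}$ and Cauchy–Schwarz. Everything else is a routine Gaussian tail computation; no new ideas beyond the log-likelihood expansion, Jensen, and a one-dimensional Gaussian deviation bound are needed.
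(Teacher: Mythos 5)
Your argument is the same as the paper's: Jensen's inequality to pass the logarithm inside the $\widetilde{\mathrm{S}}_{M}$-integral, a deterministic bound on the quadratic term using the support restriction $\|\theta-\theta_{0}\|^{2}/\varepsilon^{2}\leq 2(B/\varepsilon)^{2/(2\alpha_{0}+1)}$, and a one-dimensional Gaussian tail bound for the linear term. In fact your treatment of the linear term is slightly more careful than the paper's: you correctly identify $\varepsilon^{-2}\langle X-\theta_{0},m\rangle$ with $m=\int(\theta-\theta_{0})\,\mathrm{d}\widetilde{\mathrm{S}}_{M}$ as a single centered Gaussian and bound its variance by Jensen, whereas the paper writes an ``equality'' with $\int\sqrt{\|\theta-\theta_{0}\|^{2}/\varepsilon^{2}}\,N\,\mathrm{d}\widetilde{\mathrm{S}}_{M}$ that should really be a domination of tails. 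The one genuine slip is in your closing comparison: with $t=(B/\varepsilon)^{2/(2\alpha_{0}+1)}$ you obtain $\exp\{-t/4\}$, and since $\exp\{-t/4\}\geq\exp\{-t/2\}$ this is a \emph{weaker} bound than the one stated, not a stronger one. Moreover, a ``more careful split of the budget'' does not obviously recover $1/2$: the quadratic term genuinely uses up $t$ of the $2t$ allowance and Jensen only gives variance $\leq 2t$, so this route yields $1/4$; the paper reaches $1/2$ only by bounding the standard deviation by $\sqrt{t}$ rather than $\sqrt{2t}$. None of this matters downstream, since only the existence of some positive constant in the exponent is used in the proof of Theorem \ref{thm:PosteriorConcentrationwrtB}, but you should either state the lemma with $1/4$ or tighten the slab radius rather than assert the constant is recovered.
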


\subsection{Proof of Theorem \ref{thm:PosteriorConcentrationwrtB}}
\label{subsec:mainproof}

The proof assumes that $C$ is a positive integer.
If $C$ is not an integer,
we replace $C$ with $\lfloor C\rfloor$.
The values of $C$ in Theorem \ref{thm:PosteriorConcentrationwrtB} and $c_{1}$ in Lemma \ref{lem:Essentialsupport} are provided in (\ref{condition:C}) below.
Take $\theta_{0}$ arbitrarily in $\mathcal{E}(\alpha_{0},B)$.
Recall the equality
\begin{align*}
	\{\theta: \|\theta-\theta_{0}\|^2 / B^2 \geq C (\varepsilon / B)^{4\alpha_{0}/(2\alpha_{0}+1)} \}
	=
	\{\theta: \|\theta-\theta_{0}\|^2 / \varepsilon^2 \geq C (B/\varepsilon)^{2/(2\alpha_{0}+1)} \}.
\end{align*}

The expectation of the tail probability of the posterior is divided as follows:
\begin{align}
&\mathrm{E}_{\theta_{0},\varepsilon^{2}}
[\mathrm{S}_{M}( \|\theta-\theta_{0}\|^{2} / \varepsilon^{2}
\geq C ( B / \varepsilon )^{2/(2\alpha_{0}+1)} \mid X , \alpha)]
\nonumber\\
&\quad=\mathrm{E}_{\theta_{0},\varepsilon^{2}}
[1_{H(\theta_{0})}(X) \mathrm{S}_{M}(\|\theta-\theta_{0}\|^{2} / \varepsilon^{2} \geq 
C ( B / \varepsilon)^{2/(2\alpha_{0}+1)} \mid X , \alpha )]
\nonumber\\
&\quad\quad+\mathrm{E}_{\theta_{0},\varepsilon^{2}}
[1_{H^{\mathrm{c}}(\theta_{0})}(X)\mathrm{S}_{M}( \|\theta-\theta_{0}\|^{2} / \varepsilon^{2} 
\geq C ( B / \varepsilon )^{2/(2\alpha_{0}+1)} \mid X , \alpha )].
\label{eq:decomp}
\end{align}
From Lemma \ref{lem:highprobabilityset}, and because the probability is bounded above by one,
the latter term on the right hand side of (\ref{eq:decomp}) is bounded as follows:
\begin{align}
\mathrm{E}_{\theta_{0},\varepsilon^{2}}
&[1_{H^{\mathrm{c}}(\theta_{0})}(X)\mathrm{S}_{M}( \|\theta-\theta_{0}\|^{2} / \varepsilon^{2}
\geq C (B / \varepsilon)^{2/(2\alpha_{0}+1)} \mid X , \alpha )]
\nonumber\\
&\leq
\exp\{- (1/2) ( B / \varepsilon )^{2/(2\alpha_{0}+1)}\}.
\label{eq:decomp_firstterm}
\end{align}
We next bound the former term in the right-hand side of (\ref{eq:decomp}).

From Bayes' theorem, we have
	\begin{align}
		&\mathrm{E}_{\theta_{0},\varepsilon^{2}}
		[1_{H(\theta_{0})}(X)
		\mathrm{S}_{M} ( ||\theta-\theta_{0}||^{2} / \varepsilon^{2} 
		\geq C ( B / \varepsilon )^{2/(2\alpha_{0}+1)} \mid X , \alpha
		)]\nonumber\\
		&\quad=
		\mathrm{E}_{\theta_{0},\varepsilon^{2}}
		\bigg{[}1_{H(\theta_{0})}(X)
		\frac{\int_{ ||\theta-\theta_{0}||^{2} / \varepsilon^{2} \geq C (B/\varepsilon)^{2/(2\alpha_{0}+1)}}
			\frac{\mathrm{d}P_{\theta,\varepsilon^{2}}}{\mathrm{d}P_{\theta_{0}},\varepsilon^{2} }(X)  \mathrm{d}\mathrm{S}_{M}(\theta \mid \alpha) }
			{\int \frac{\mathrm{d}P_{\theta,\varepsilon^{2}}}{\mathrm{d}P_{\theta_{0},\varepsilon^{2}}}(X)  \mathrm{d}\mathrm{S}_{M}(\theta \mid \alpha) } 
		\bigg{]}.
		\label{eq:firstterm_Bayes}
	\end{align}
Consider the numerator
 \[\int_{ \|\theta-\theta_{0}\|^{2} / \varepsilon^{2} \geq C ( B / \varepsilon)^{2/(2\alpha_{0}+1)}} \frac{\mathrm{d}P_{\theta,\varepsilon^{2}}}{\mathrm{d}P_{\theta_{0},\varepsilon^{2}}}(X)
\mathrm{d}\mathrm{S}_{M}(\theta\mid\alpha).\]
Letting $\{ \bar{\theta}_{(j,k)}: k=1,\ldots, N(j;c_{1})\}$
be an $ ( \varepsilon / 8 ) \sqrt{j ( B / \varepsilon)^{2/(2\alpha_{0}+1)}}$-net of $R(j;c_{1})$,
Lemma \ref{lem:testcondition} yields sequences of measurable functions $\psi_{j,k}$ 
	such that for each $k$, we have
	\begin{align}
	\mathrm{E}_{\theta_{0},\varepsilon^{2}}[\psi_{j,k}(X)]
	\leq \exp\{-(j/8)(B/\varepsilon)^{2/(2\alpha_{0}+1)}\}
	\label{type1_error}
	\end{align}
	and
	\begin{align}
	\sup_{\theta: \|\theta-\bar{\theta}_{(j,k)}\| < (\varepsilon/4)
	\sqrt{j (B/\varepsilon)^{2/(2\alpha_{0}+1)}} }
	\mathrm{E}_{\theta,\varepsilon^{2}}[1-\psi_{j,k}(X)]
	\leq \exp\{-(j/32)(B/\varepsilon)^{2/(2\alpha_{0}+1)}\}.
	\label{balluniform_type2_error}
	\end{align}
Letting $U(\bar{\theta}_{(j,k)})$ be the $(\varepsilon/8) \sqrt{j(B/\varepsilon)^{2/(2\alpha_{0}+1)}}$-ball 
around $\bar{\theta}_{(j,k)}$,
and
using the sequences $\{\psi_{j,k}\}$ and the balls $\{U(\bar{\theta}_{(j,k)})\}$,
we have, for $X\in H(\theta_{0})$,
\begin{align*}
	&\int_{ \|\theta-\theta_{0}\|^{2} / \varepsilon^{2} \geq C ( B / \varepsilon )^{2/(2\alpha_{0}+1)}}
\frac{\mathrm{d}P_{\theta,\varepsilon^{2}}}{\mathrm{d}P_{\theta_{0},\varepsilon^{2}}}(X)
\mathrm{d}\mathrm{S}_{M}(\theta \mid \alpha)
\nonumber\\
&\quad\leq\sum_{j=C}^{\infty}\sum_{k=1}^{N(j;c_{1})}\int_{U(\bar{\theta}_{(j,k)})}(1-\psi_{j,k}(X))\frac{\mathrm{d}P_{\theta,\varepsilon^{2}}}{\mathrm{d}P_{\theta_{0},\varepsilon^{2}}}(X)\mathrm{d}\mathrm{S}_{M}(\theta \mid \alpha)
\nonumber\\
&\quad\quad+\sum_{j=C}^{\infty}\sum_{k=1}^{N(j;c_{1})}\int_{U(\bar{\theta}_{(j,k)})}\psi_{j,k}(X)\frac{\mathrm{d}P_{\theta,\varepsilon^{2}}}{\mathrm{d}P_{\theta_{0},\varepsilon^{2}}}(X)\mathrm{d}\mathrm{S}_{M}(\theta \mid \alpha)
\nonumber\\
&\quad\quad+\int_{E^{\mathrm{c}}_{c_{1}}(\theta_{0})}
\frac{\mathrm{d}P_{\theta,\varepsilon^{2}}}{\mathrm{d}P_{\theta_{0},\varepsilon^{2}}}(X)
\mathrm{d}\mathrm{S}_{M}(\theta \mid \alpha).
\end{align*}
From the above inequality,
it follows that
\begin{align}
\mathrm{E}_{\theta_{0},\varepsilon^{2}}
[1_{H(\theta_{0})}(X)
\mathrm{S}_{M} ( \|\theta-\theta_{0}\|^{2} / \varepsilon^{2} \geq C ( B / \varepsilon )^{2/(2\alpha_{0}+1)}
\mid X , \alpha
)]
\leq T_{1}+T_{2}+T_{3},
\label{eq:decomp_three}
\end{align}
where
\begin{align*}
	T_{1}&:=\mathrm{E}_{\theta_{0},\varepsilon^{2}}
\left[1_{H(\theta_{0})}\frac{ \sum_{j=C}^{\infty}\sum_{k=1}^{N(j;c_{1})}\int_{U(\bar{\theta}_{(j,k)})}(1-\psi_{j,k})
\frac{\mathrm{d}P_{\theta,\varepsilon^{2}}}{\mathrm{d}P_{\theta_{0},\varepsilon^{2}}}
\mathrm{d}\mathrm{S}_{M}(\theta \mid \alpha) }
{\int \frac{\mathrm{d}P_{\theta,\varepsilon^{2}}}{\mathrm{d}P_{\theta_{0},\varepsilon^{2}}}
\mathrm{d}\mathrm{S}_{M}(\theta \mid \alpha )  }\right],
\end{align*}
\begin{align*}
T_{2}&:=\mathrm{E}_{\theta_{0},\varepsilon^{2}}
\left[1_{H(\theta_{0})}\frac{ \sum_{j=C}^{\infty}\sum_{k=1}^{N(j;c_{1})}\int_{U(\bar{\theta}_{(j,k)})}\psi_{j,k}\frac{\mathrm{d}P_{\theta,\varepsilon^{2}}}{\mathrm{d}P_{\theta_{0},\varepsilon^{2}}}
\mathrm{d}\mathrm{S}_{M}(\theta \mid \alpha)  }
{\int \frac{\mathrm{d}P_{\theta,\varepsilon^{2}}}{\mathrm{d}P_{\theta_{0},\varepsilon^{2}}}
\mathrm{d}\mathrm{S}_{M}(\theta \mid \alpha) }\right],
\end{align*}
and
\begin{align*}
T_{3}&:=\mathrm{E}_{\theta_{0},\varepsilon^{2}}
\left[1_{H(\theta_{0})}(X) \frac{ \int_{E^{\mathrm{c}}_{c_{1}}(\theta_{0})}\frac{\mathrm{d}P_{\theta,\varepsilon^{2}}}{\mathrm{d}P_{\theta_{0},\varepsilon^{2}}}(X)\mathrm{d}\mathrm{S}_{M}(\theta \mid \alpha)  }
{\int \frac{\mathrm{d}P_{\theta,\varepsilon^{2}}}{\mathrm{d}P_{\theta_{0},\varepsilon^{2}}}(X)
\mathrm{d}\mathrm{S}_{M}(\theta \mid \alpha) }\right] .
\end{align*}
Providing upper bounds on $T_{1}$, $T_{2}$, and $T_{3}$ will complete the proof.

Consider an upper bound on $T_{1}$ in (\ref{eq:decomp_three}).
In bounding $T_{1}$,
we use the following lower bound on
$\int \{ \mathrm{d}P_{\theta,\varepsilon^{2}} / \mathrm{d}P_{\theta_{0},\varepsilon^{2}}\}(X)
\mathrm{d}\mathrm{S}_{M}(\theta \mid \alpha)$.
From the definition of $H(\theta_{0})$ and from Lemma \ref{lem:priormasscondition},
for $X\in H(\theta_{0})$,
we have
\begin{align}
\int &\frac{\mathrm{d}P_{\theta,\varepsilon^{2}}}{\mathrm{d}P_{\theta_{0},\varepsilon^{2}}}(X)
\mathrm{d}\mathrm{S}_{M}(\theta \mid \alpha)
\nonumber\\
&\geq
\int_{ \|\theta-\theta_{0}\|^{2} / \varepsilon^{2} \leq 2 ( B / \varepsilon)^{2/(2\alpha_{0}+1)}} \frac{\mathrm{d}P_{\theta,\varepsilon^{2}}}{\mathrm{d}P_{\theta_{0},\varepsilon^{2}}}(X)
\mathrm{d}\mathrm{S}_{M}(\theta \mid \alpha)
\nonumber\\
&=\mathrm{S}_{M}(\|\theta-\theta_{0}\|^{2} / \varepsilon^{2}\leq 2( B  / \varepsilon )^{2/(2\alpha_{0}+1)})
\int \frac{\mathrm{d}P_{\theta,\varepsilon^{2}}}{\mathrm{d}P_{\theta_{0},\varepsilon^{2}}}(X)
\mathrm{d}\widetilde{\mathrm{S}}_{M}(\theta \mid \alpha)
\nonumber\\
&\geq
\exp\{-(c_{2}+2)(B/\varepsilon)^{2/(2\alpha_{0}+1)}\}.
\label{eq:lowerbound_denom}
\end{align}
From the above inequality, from  Fubini's theorem, from Lemmas \ref{lem:coveringnumber} and \ref{lem:testcondition},
and
from the inequality that
$1-\exp\{- 1 / 32 \} > 1/\mathrm{e}^{4}$,
we have
\begin{align}
T_{1}&\leq \exp\{(c_{2}+2) ( B / \varepsilon)^{2 / (2\alpha_{0}+1)} \}
	\mathrm{E}_{\theta_{0},\varepsilon^{2}}
	\bigg{[}\sum_{j=C}^{\infty}\sum_{k=1}^{N(j;c_{1})}\int_{U(\bar{\theta}_{(j,k)})}(1-\psi_{j,k})
\frac{\mathrm{d}P_{\theta,\varepsilon^{2}}}{\mathrm{d}P_{\theta_{0},\varepsilon^{2}}}
\mathrm{d}\mathrm{S}_{M}(\theta\mid\alpha) \bigg{]}  \nonumber\\
&\leq \exp\{(c_{2}+2)(B/\varepsilon)^{2 / (2\alpha_{0}+1)}\}
\sum_{j=C}^{\infty}N(j;c_{1})\exp\{-(j/32)(B/\varepsilon)^{2/(2\alpha_{0}+1)}\}
\nonumber\\
&\leq \exp\{(2c_{1} + c_{2} + 6 - C/32) (B/\varepsilon)^{2/(2\alpha_{0}+1)}\}.
\label{eq:bound_1}
\end{align}

Consider an upper bound on $T_{2}$ in (\ref{eq:decomp_three}).
Since $1-\exp\{- 1 / 8 \} > 1/\mathrm{e}^{3}$,
we have
\begin{align}
T_{2}&\leq \mathrm{E}_{\theta_{0},\varepsilon^{2}}
	\bigg{[}1_{H(\theta_{0})}(X)\sum_{j=C}^{\infty}\sum_{k=1}^{N(j;c_{1})}\psi_{j,k}(X)\bigg{]}
\nonumber\\
&\leq \sum_{j=C}^{\infty}\sum_{k=1}^{N(j;c_{1})}
\exp\{-(j/8)(B/\varepsilon)^{2/(2\alpha_{0}+1)} \}
\nonumber\\
&\leq  \exp\{(2c_{1} + 3 - C/8)(B/\varepsilon)^{2/(2\alpha_{0}+1)}\}.
\label{eq:bound_2}
\end{align}
Here, the second inequality follows from Lemma \ref{lem:testcondition},
and the third inequality follows from Lemma \ref{lem:coveringnumber}.

For an upper bound on $T_{3}$ in (\ref{eq:decomp_three}),
it follows that
\begin{align}
T_{3}&\leq \exp\{(c_{2}+2)(B/\varepsilon)^{2/(2\alpha_{0}+1)}\}
	\mathrm{E}_{\theta_{0},\varepsilon^{2}}
	\int_{E^{\mathrm{c}}_{c_{1}}(\theta_{0})}\frac{\mathrm{d}P_{\theta,\varepsilon^{2}} }{\mathrm{d}P_{\theta_{0},\varepsilon^{2}}}\mathrm{d}\mathrm{S}_{M}(\theta\mid\alpha)
\nonumber\\
&\leq \exp\{(c_{2}+2)(B/\varepsilon)^{2/(2\alpha_{0}+1)}\}
\mathrm{S}_{M}(E^{\mathrm{c}}_{c_{1}}(\theta_{0}) \mid \alpha)
\nonumber\\
&\leq
\exp\{(c_{2}+2+\eta -\eta c_{1})(B/\varepsilon)^{2/(2\alpha_{0}+1)}\}.
\label{eq:bound_3}
\end{align}
The first inequality follows from (\ref{eq:lowerbound_denom}).
The second inequality follows from Fubini's theorem.
The third inequality follows from Lemma \ref{lem:priormasscondition}.

Thus,
using (\ref{eq:decomp_firstterm}), (\ref{eq:bound_1}), (\ref{eq:bound_2}), and (\ref{eq:bound_3}) for an upper bound on (\ref{eq:decomp}),
and
taking $c_{1}$ and $C$ such that
\begin{align}
c_{2}+2+\eta- \eta c_{1}<0,\, 2c_{1} + 3 - C/8<0,\,\text{and}\,\, 2c_{1}+c_{2}+6-C/32<0,
\label{condition:C}
\end{align}
we complete the proof.

\qed

\subsection{Proof of Theorem \ref{thm:AdaptivePosteriorConcentrationwrtB}}
\label{subsec:subproof}

We provide the proof of Theorem \ref{thm:AdaptivePosteriorConcentrationwrtB}.
Replacing Lemmas \ref{lem:Essentialsupport} and \ref{lem:priormasscondition}
by Lemmas \ref{lem:AdaptiveEssentialsupport} and \ref{lem:Adaptivepriormasscondition}, respectively,
completes the proof,
because the other lemmas used in the proof of Theorem \ref{thm:PosteriorConcentrationwrtB}
do not depend on prior distributions.
The proofs of Lemmas \ref{lem:AdaptiveEssentialsupport} and \ref{lem:Adaptivepriormasscondition} are provided in Appendix \ref{subsec:proofs_lemmas}.

\begin{lem}\label{lem:AdaptiveEssentialsupport}
	For any $c_{1}>1$, the inequality
	\begin{align*}
		\Pi(E^{\mathrm{c}}_{c_{1}}(\theta_{0}) ) 
		\leq \exp\{ - \eta(c_{1}-1)(B/\varepsilon)^{2/(2\alpha_{0}+1)}\}
	\end{align*}
	holds uniformly in $\theta_{0}\in\mathcal{E}(\alpha_{0},B)$.
	Here, $\eta$ is a hyperparameter of $M$.
\end{lem}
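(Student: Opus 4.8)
The plan is to reduce Lemma~\ref{lem:AdaptiveEssentialsupport} to Lemma~\ref{lem:Essentialsupport} by conditioning on the smoothness index $k$ drawn from $F$. Since $\Pi=\sum_{k=1}^{\infty}F(k)\mathrm{S}_{M}(\cdot\mid\alpha=k)$ is a convex combination, we have
\begin{align*}
	\Pi(E^{\mathrm{c}}_{c_{1}}(\theta_{0})) = \sum_{k=1}^{\infty} F(k)\, \mathrm{S}_{M}(E^{\mathrm{c}}_{c_{1}}(\theta_{0})\mid \alpha=k).
\end{align*}
The key observation is that Lemma~\ref{lem:Essentialsupport} gives the bound $\mathrm{S}_{M}(E^{\mathrm{c}}_{c_{1}}(\theta_{0})\mid\alpha)\leq\exp\{-\eta(c_{1}-1)(B/\varepsilon)^{2/(2\alpha_{0}+1)}\}$ \emph{uniformly in $\alpha>0$}, in particular for every $\alpha=k\in\mathbb{N}$, and the right-hand side does not depend on $k$. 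This is the crucial point: the essential-support estimate of Lemma~\ref{lem:Essentialsupport} is driven entirely by the tail behavior of $M$ (the exponential rate $\eta$), since $E_{c_{1}}(\theta_{0})$ only constrains coordinates beyond index $\lfloor c_{1}(B/\varepsilon)^{2/(2\alpha_{0}+1)}\rfloor$, and the prior $\mathrm{S}(\cdot\mid d,\alpha)$ places zero mass on those coordinates whenever $d\leq c_{1}(B/\varepsilon)^{2/(2\alpha_{0}+1)}$, regardless of $\alpha$.

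First I would write out the conditional decomposition above, then apply Lemma~\ref{lem:Essentialsupport} termwise with $\alpha=k$ to bound each summand by the $k$-independent quantity $\exp\{-\eta(c_{1}-1)(B/\varepsilon)^{2/(2\alpha_{0}+1)}\}$. Pulling this constant out of the sum and using $\sum_{k=1}^{\infty}F(k)=1$ yields
\begin{align*}
	\Pi(E^{\mathrm{c}}_{c_{1}}(\theta_{0})) \leq \exp\{-\eta(c_{1}-1)(B/\varepsilon)^{2/(2\alpha_{0}+1)}\}\sum_{k=1}^{\infty}F(k) = \exp\{-\eta(c_{1}-1)(B/\varepsilon)^{2/(2\alpha_{0}+1)}\},
\end{align*}
which is exactly the claimed inequality, and the bound is uniform in $\theta_{0}\in\mathcal{E}(\alpha_{0},B)$ because the bound in Lemma~\ref{lem:Essentialsupport} is.

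I do not anticipate a genuine obstacle here: the only thing to verify carefully is that Lemma~\ref{lem:Essentialsupport} is genuinely uniform over all $\alpha>0$ (so that it applies at every integer $\alpha=k$ with a bound not depending on $k$), which is how that lemma is stated. The one subtlety worth a sentence of explanation is \emph{why} the essential-support bound is insensitive to $\alpha$: the event $E^{\mathrm{c}}_{c_{1}}(\theta_{0})$ forces a large contribution from coordinates $i>\lfloor c_{1}(B/\varepsilon)^{2/(2\alpha_{0}+1)}\rfloor$, and under $\mathrm{S}(\cdot\mid d,\alpha)$ those coordinates are deterministically zero unless $d$ exceeds that threshold, so the $\mathrm{S}_{M}$-mass of $E^{\mathrm{c}}_{c_{1}}(\theta_{0})$ is controlled by $M(\{d : d > \lfloor c_{1}(B/\varepsilon)^{2/(2\alpha_{0}+1)}\rfloor\})$, which is $\leq\exp\{-\eta(c_{1}-1)(B/\varepsilon)^{2/(2\alpha_{0}+1)}\}$ by the geometric form of $M$ — a quantity with no dependence on the scale parameter $\alpha$. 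Hence the mixture over $k\sim F$ inherits the same bound trivially.
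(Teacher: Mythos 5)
Your proof is correct and is exactly the paper's argument: the paper's proof of this lemma is the one-line observation that it follows from the mixture identity $\Pi=\sum_{k}F(k)\mathrm{S}_{M}(\cdot\mid\alpha=k)$ together with the $\alpha$-uniform bound of Lemma \ref{lem:Essentialsupport}. Your added explanation of why the essential-support bound is insensitive to $\alpha$ (it is controlled purely by the tail of $M$) is accurate and matches the appendix proof of Lemma \ref{lem:Essentialsupport}.
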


\begin{lem}\label{lem:Adaptivepriormasscondition}
	There exists a positive constant $c_{2}$ depending only on $\alpha_{0}$, $\eta$ of $M$, and $\gamma$ of $F$
	for which the inequality
	\begin{align*}
		\Pi( \theta: \|\theta-\theta_{0}\|^2 /\varepsilon^2 \leq 2 (B/\varepsilon)^2 )
		\geq \exp\{-c_{2}(B/\varepsilon)^{2/(2\alpha_{0}+1)} \}
	\end{align*}
	holds uniformly in $\theta_{0}\in \mathcal{E}(\alpha_{0},B)$,
	provided that $\varepsilon/B$ is smaller than one.
\end{lem}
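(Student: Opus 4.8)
The plan is to deduce the bound from Lemma~\ref{lem:priormasscondition} by restricting the mixture over the smoothness level to a single, suitably chosen atom. Write $A_{0}:=\{\theta\in l_{2}:\|\theta-\theta_{0}\|^{2}/\varepsilon^{2}\leq 2(B/\varepsilon)^{2/(2\alpha_{0}+1)}\}$ for the event appearing in the statement. Since $\Pi=\sum_{k=1}^{\infty}F(k)\,\mathrm{S}_{M}(\cdot\mid\alpha=k)$ is a convex combination of probability measures, discarding all but one term gives, for every $k\in\mathbb{N}$,
\begin{align*}
	\Pi(A_{0})\geq F(k)\,\mathrm{S}_{M}(A_{0}\mid\alpha=k).
\end{align*}
First I would fix $k^{\star}:=\max\{1,\lceil\alpha_{0}-1/2\rceil\}\in\mathbb{N}$, which satisfies $k^{\star}\geq\alpha_{0}-1/2$, so that Lemma~\ref{lem:priormasscondition} applies with $\alpha=k^{\star}$ and yields a constant $c_{2}^{\prime}$ depending only on $\alpha_{0}$ and $\eta$ for which $\mathrm{S}_{M}(A_{0}\mid\alpha=k^{\star})\geq\exp\{-c_{2}^{\prime}(B/\varepsilon)^{2/(2\alpha_{0}+1)}\}$ holds uniformly in $\theta_{0}\in\mathcal{E}(\alpha_{0},B)$ whenever $\varepsilon/B<1$.

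It then remains to absorb the prefactor $F(k^{\star})$ into the exponent. Because $F(d)\propto\exp(-\gamma d)$ on $\mathbb{N}$, its normalizing constant is $e^{-\gamma}/(1-e^{-\gamma})$, so $F(k^{\star})=(1-e^{-\gamma})\,e^{-\gamma(k^{\star}-1)}$ is a strictly positive constant depending only on $\gamma$ and $\alpha_{0}$, and $F(k^{\star})\leq 1$. Since $\varepsilon/B<1$ forces $(B/\varepsilon)^{2/(2\alpha_{0}+1)}\geq 1$, we have $F(k^{\star})=\exp\{-\log(1/F(k^{\star}))\}\geq\exp\{-\log(1/F(k^{\star}))\,(B/\varepsilon)^{2/(2\alpha_{0}+1)}\}$. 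Combining this with the two previous displays and setting $c_{2}:=c_{2}^{\prime}+\log(1/F(k^{\star}))$, a constant depending only on $\alpha_{0}$, $\eta$, and $\gamma$, gives $\Pi(A_{0})\geq\exp\{-c_{2}(B/\varepsilon)^{2/(2\alpha_{0}+1)}\}$, which is the assertion.

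There is no deep obstacle here: the lemma is essentially an immediate corollary of Lemma~\ref{lem:priormasscondition}, the point being that one admissible smoothness level $k^{\star}\geq\alpha_{0}-1/2$ already carries enough prior mass, and mixing over $k$ costs only the constant factor $F(k^{\star})$, which is harmless on the exponential scale precisely because $(B/\varepsilon)^{2/(2\alpha_{0}+1)}\geq 1$. The only care required is bookkeeping of dependencies: one must check that the constant produced by Lemma~\ref{lem:priormasscondition} at $\alpha=k^{\star}$ does not depend on $B$ or $\varepsilon$ (it depends only on $\alpha_{0}$ and $\eta$), and that $k^{\star}$ is a function of $\alpha_{0}$ alone, so that $F(k^{\star})$ contributes only an $(\alpha_{0},\gamma)$-dependent term to $c_{2}$.
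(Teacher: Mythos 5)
Your proposal is correct and follows essentially the same route as the paper: both restrict the mixture over $\alpha$ to a single integer level $\geq\alpha_{0}-1/2$, invoke the prior-mass bound of Lemma~\ref{lem:priormasscondition} (the paper re-traces its proof via Lemma~\ref{lem:smallballestimate}, you use it as a black box), and absorb the constant factor $F(k^{\star})$ into the exponent using $(B/\varepsilon)^{2/(2\alpha_{0}+1)}\geq 1$. Your explicit bookkeeping that $k^{\star}$ depends only on $\alpha_{0}$, so the resulting constant depends only on $\alpha_{0}$, $\eta$, and $\gamma$, is exactly the point the paper's proof relies on.
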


\subsection{Proof of Corollary \ref{cor:finite}}\label{subsec:proofofcor}

It suffices to derive a risk bound for the posterior mean $\hat{\theta}_{\Pi^{(p)}}$
in estimation of $(\theta_{1},\ldots,\theta_{p})$ based on i.i.d.~$p$ observations $X_{i}$ from $\mathcal{N}(\theta_{i},1/n)$.
The reason is as follows.
For $i\in\mathbb{N}$, let $\theta_{i}=\int f \phi_{i}\mathrm{d}t$.
We focus on $\sum_{i=1}^{p}(\theta_{i}-\hat{\theta}_{\Pi^{(p)},i})^2$ instead of $\|f-\hat{f}\|^2_{L_2}$
since
it follows
from the Parseval inequality
\begin{align*}
\|f-\hat{f}_{\Pi^{(p)}}\|_{L_2}^2 = \sum_{i=1}^{p}(\theta_{i}-\hat{\theta}_{\Pi^{(p)},i})^2
+ \sum_{i=p+1}^{\infty}\theta_{i}^{2}.
\end{align*}
The bias term appearing in the true distribution of $\{Y_{1},\ldots,Y_{n}\}$
due to $\{\phi_{j}(k/n): j=p+1,p+2,\ldots, k=1,\ldots,n\}$
is negligible
by a sufficiency reduction
because
the vectors 
$\{(\phi_{i}(1/n) , \ldots , \phi_{i}(1) )^{\top} / \sqrt{n} : i=1,\ldots, p\}$ 
are orthonormals in $\mathbb{R}^{p}$, provided that $p<n$.

Let $\underline{p}:= \min\{p, (nB^2)^{1/(2\alpha_{0}+1)}\}$.
To complete the proof,
it suffices to show that,
for a sufficiently large $C>0$ depending on $\alpha_{0}$, $\eta$, and $\gamma$,
the inequality
\begin{align*}
	\sup_{\theta^{(p)}_{0}: \sum_{i=1}^{p}i^{2\alpha_{0}}\theta^{(p),2}_{i,0} \leq B^2 }&\mathrm{E}_{\theta^{(p)}_{0},\varepsilon^{2}}
	{\Pi}^{(p)}
	 \bigg{(} n \sum_{i=1}^{p}\{\theta_{i}-\theta^{(p)}_{0,i}\}^{2} \geq
	C \underline{p} \bigg{)}
        \leq \exp\{-c \underline{p}\}
\end{align*}
holds,
where $c$ is a constant depending only on $\alpha_{0},\eta,$ and $\gamma$.
This is proved as follows.
If $p$ is larger than $c_{1}(n B^{2})^{1/(2\alpha_{0}+1)}$ for the constant $c_{1}$ appearing in the proof of Theorem \ref{thm:PosteriorConcentrationwrtB},
then the same proof as that of Theorem \ref{thm:AdaptivePosteriorConcentrationwrtB} is available.
Consider the case in which $p$ is smaller than $c_{1}(nB^2)^{1/(2\alpha_{0}+1)}$.
In this case, we replace $(n B^{2})^{1/(2\alpha_{0}+1)}$ by $p$.
This replacement does not change the conclusion as discussed below.
Lemmas \ref{lem:testcondition} and \ref{lem:highprobabilityset} do not change because their proofs rely only on the properties of the Gaussian measure.
Lemma \ref{lem:coveringnumber} still holds because the log of the covering number is bounded by $2p$.
Lemma \ref{lem:AdaptiveEssentialsupport} obviously holds for ${\Pi}^{(p)}$,
because $E_{c_{1}}(\theta_{0})$ is $\mathbb{R}^{p}$ itself for the case in which
$p < c_{1}(n B^{2} )^{1/(2\alpha_{0}+1)}$.
Lemma \ref{lem:Adaptivepriormasscondition} holds for $\mathrm{S}^{(p)}_{M,\alpha_{0}}$,
because the required lemma (Lemma \ref{lem:smallballestimate}) for the proof of Lemma \ref{lem:Adaptivepriormasscondition} is still available.
This completes the proof.
\qed

\section{Discussion}
\label{sec:discussion}


We propose two principal future studies.
The first study is to find a means of attaining non-asymptotic adaptation in the empirical Bayesian manner.
Recently, 
Petrone et al.~\cite{PetroneRousseauScricciolo(2014)} 
and Rousseau and Szab\'{o} \cite{RousseauandSzabo(2017)} established important asymptotic results
on the performance of empirical Bayesian nonparametrics.
Focusing on a simple setting, 
we expect to be able to answer whether there exists an empirical Bayesian method attaining non-asymptotic adaptation.
The investigation would also provide an insight into the relationship among
Bayesian nonparametrics, empirical Bayesian nonparametrics, model selection, and frequentist model averaging.
The second is to investigate non-asymptotic Bayesian adaptation in the other settings.
The present paper used a Gaussian infinite sequence model under Sobolev-type parameter constraints
for simplicity and for clarity of presentation.
One possible extension of our work would be to investigate
non-asymptotic Bayesian adaptation
in density estimation.
Resolution of these problems will increase our understanding of nonparametric estimation.

\appendix

\section{Proof for Section \ref{sec:Failures}}
\label{Appendix:Proof for Section Failures}

In this appendix, 
we provide the proof of Proposition \ref{prop:model selection and model averaging} 
in the case with the model averaging estimator.
To apply the argument in Section 7.B.~of \cite{LeungandBarron(2006)},
slight modifications to the loss functions and the weights are necessary,
because Leung and Barron \cite{LeungandBarron(2006)} use the finite dimensional $l_2$ loss function 
and
assume that the number of models is finite.
Although these modifications are straightforward, we provide them for the sake of completeness.

\begin{proof}
Let $D:=\lfloor (B/\varepsilon)^{2/(2\alpha_{0}+1)} \rfloor$.
The $l_{2}$ risk $\mathrm{E}_{\theta,\varepsilon^2}\|\theta-\hat{\theta}_{\mathrm{MA},\beta}\|^2$ 
is decomposed as follows:
	\begin{align}
		\mathrm{E}_{\theta,\varepsilon^2}\|\theta-\hat{\theta}_{\mathrm{MA},\beta}\|^2
		= \mathrm{E}_{\theta,\varepsilon^{2}} \sum_{i=1}^{D} (\theta_{i} - \hat{\theta}_{\mathrm{MA},\beta, i} )^{2}
		+ \mathrm{E}_{\theta,\varepsilon^{2}} \sum_{i=D+1}^{\infty} (\theta_{i} - \hat{\theta}_{\mathrm{MA},\beta,i})^{2}.
		\label{eq: decomp MA risk}
\end{align}

First, we show that the latter term on the right hand side in (\ref{eq: decomp MA risk}) is $\mathrm{O}(D)$.
The latter term on the right hand side in (\ref{eq: decomp MA risk}) is bounded above as
\begin{align*}
		\mathrm{E}_{\theta,\varepsilon^{2}} \sum_{i=D+1}^{\infty} (\theta_{i} - \hat{\theta}_{\mathrm{MA},\beta,i})^{2}
		& \leq 2\sum_{i=D+1}^{\infty}\theta_{i}^{2}
		+ 2 \mathrm{E}_{\theta,\varepsilon^{2}} \sum_{i=D+1}^{\infty}\hat{\theta}_{\mathrm{MA,\beta,i}}^{2}
		\\
		& \leq 6 \sum_{i=D+1}^{\infty} \theta_{i}^2 
		+ 4 \sqrt{3} \varepsilon^{2} \sum_{i=D+1}^{\infty} \bigg{\{} \mathrm{E}_{\theta,\varepsilon^{2}} \bigg{(} \sum_{d=i}^{\infty} w_{d} \bigg{)}^4 \bigg{\}}^{1/2},
\end{align*}
where the second inequality follows from the fact that $\hat{\theta}_{\mathrm{MA} , \beta , i}= \sum_{d=1}^{\infty} w_{d}X_{i}1_{i \leq d} $
and
from the Cauchy--Schwarz inequality.
Consider an upper bound on the latter term in the above inequality.
Note that the inequality
\begin{align*}
	\sum_{d=i}^{\infty}w_{d} \leq \sum_{d=i}^{\infty}\exp\bigg{[} \{\beta / (2\varepsilon^{2})\} \bigg{\{}\sum_{j=D}^{d}X_{j}^{2}-2\varepsilon^2 (d-D) \bigg{\}} \bigg{]}
\end{align*}
holds.
For $i\geq 25D$ and for some $s>0$,
\begin{align*}
	&\sum_{d=i}^{\infty}\mathrm{Pr}\bigg{(}\sum_{j=D}^{d}X_{j}^{2} -2\varepsilon^2 (d-D) > -\varepsilon^2 (d-D) / 4 \bigg{)} \\
	&\leq \sum_{d=i}^{\infty}\mathrm{Pr}\bigg{(} 3\sum_{j=D}^{d}N_{j}^2/2 + 3 \sum_{j=D}^{\infty}\theta_{j}^2/\varepsilon^2
	> 7(d-D)/4   \bigg{)} \\
	&\leq \exp\{-s(i-D)\},
\end{align*}
where the first inequality follows because $xy\leq x^2/2+ 2y^2$ for $x,y>0$,
and
the second inequality follows because $\sum_{j=D}\theta_{j}^2/\varepsilon^2  \leq D^{2\alpha_{0}+1}D^{-2\alpha_{0}} $,
because $d> 25D$,
and
from the Borell--Sudakov--Tsirelson Gaussian concentration inequality.
Therefore, there exists a universal positive constant $s'$ for which
\begin{align*}
	\mathrm{E}_{\theta,\varepsilon^{2}} \sum_{i=D+1}^{\infty} (\theta_{i} - \hat{\theta}_{\mathrm{MA},\beta,i})^{2}
	\leq \varepsilon^{2} \{6D+100\sqrt{3}D+4\sqrt{3}\exp(-s'D)\}.
\end{align*}

Second, we show that the former term on the right hand side in (\ref{eq: decomp MA risk}) is bounded as
\begin{align*}
\mathrm{E}_{\theta,\varepsilon^{2}} \sum_{i=1}^{D} (\theta_{i} - \hat{\theta}_{\mathrm{MA},\beta, i} )^{2}
\leq \mathrm{E}_{\theta,\varepsilon^{2}} \sum_{d=1}^{D}\tilde{w}_{d}^{(D)}\hat{r}_{d}^{(D)} + D\varepsilon^{2},
\end{align*}
where
for $d=1,\ldots,D$,
$\tilde{w}^{(D)}_{d} := w_{d} / \sum_{d'=1}^{D} w_{d'}$ 
and
for $d=1,2,\ldots$,
$\hat{r}_{d}^{(D)} := \sum_{i=1}^{D}(X_{i}-X_{i}1_{i\leq d})^2 - D\varepsilon^2 +2\min\{D,d\} \varepsilon^2 $.
Let 
\begin{align*}
	\hat{r}^{(D)}:=\sum_{d=1}^{\infty}w_{d}\bigg{[}\hat{r}^{(D)}_{d}-(1-2\beta)\sum_{i=1}^{D}(X_{i}1_{i\leq d}-\hat{\theta}_{\mathrm{MA},\beta,i} )^2 \bigg{]}.
\end{align*}
Noting that $\hat{r}^{(D)}_{d}$ is a risk unbiased estimator of $\mathrm{E}_{\theta,\varepsilon^2}[\sum_{i=1}^{D}(X_{i}1_{i\leq d}-\theta_{i})^2]$
and
that $\hat{r}^{(D)}$ is a risk unbiased estimator of $\mathrm{E}_{\theta,\varepsilon^2} [\sum_{i=1}^{D}( \hat{\theta}_{\mathrm{MA},\beta,i} - \theta_{i} )^2 ]$,
we have
\begin{align*}
	\mathrm{E}_{\theta,\varepsilon^2}\sum_{i=1}^{D}(\theta_{i}-\hat{\theta}_{\mathrm{MA},\beta,i})^2
	=\mathrm{E}_{\theta,\varepsilon^2}\hat{r}^{(D)}
	\leq \mathrm{E}_{\theta,\varepsilon^2} \sum_{d=1}^{\infty}w_{d}\hat{r}^{(D)}_{d} ,
\end{align*}
where we use the condition that $\beta\leq 1/2$.
Thus,
we obtain
\begin{align*}
\mathrm{E}_{\theta,\varepsilon^2} \sum_{i=1}^{D}(\theta_{i}-\hat{\theta}_{\mathrm{MA},\beta,i})^2
\leq 
\mathrm{E}_{\theta,\varepsilon^2} \sum_{d=1}^{D}\tilde{w}_{d}^{(D)}\hat{r}^{(D)}_{d} + D\varepsilon^{2}.
\end{align*}

Finally,
since
\begin{align*}
	\sum_{d=1}^{D}\tilde{w}^{(D)}_{d}\hat{r}^{(D)}_{d}
	\leq \min_{d=1, \ldots , D}\hat{r}^{(D)}_{d} + (2\varepsilon^{2} /\beta) \log D,
\end{align*}
applying the argument in Section 7.B.~in \cite{LeungandBarron(2006)} completes the proof.

\end{proof}

\section{Proofs of lemmas in Section \ref{sec:details_ss}}
\label{subsec:proofs_lemmas}

In this appendix,
we provide the proofs of Lemmas \ref{lem:Essentialsupport}, \ref{lem:priormasscondition}, \ref{lem:highprobabilityset}, \ref{lem:AdaptiveEssentialsupport}, \ref{lem:Adaptivepriormasscondition}.
For the proof of Lemma \ref{lem:coveringnumber}, see Proposition A.1 in \cite{GaoandZhou(2016)}.
For the proof of Lemma \ref{lem:testcondition}, see Lemma 5 in \cite{GhosalandvanderVaart(2007)}.

\begin{proof}[Proof of Lemma \ref{lem:Essentialsupport}]
Let $\bar{D}=\lfloor c_{1}(B/\varepsilon)^{2/(2\alpha_{0}+1)}\rfloor$.
From the definition of $\mathrm{S}_{M}(\cdot\mid\alpha )$,
\begin{align*}
\mathrm{S}_{M} (E_{c_{1}}^{\mathrm{c}}(\theta_{0}) \mid \alpha)
&=\sum_{d=1}^{\bar{D} }M(d) \mathrm{S} \bigg{(} \sum_{i> \bar{D} }
 (\theta_{i}-\theta_{0,i})^{2}/ \varepsilon^{2}
 > (B / \varepsilon )^{2/(2\alpha_{0}+1)}  \mid d, \alpha \bigg{)}
\nonumber\\
&\quad +\sum_{d=\bar{D} +1}^{\infty}M(d) \mathrm{S}
\bigg{(}\sum_{i>\bar{D} }
(\theta_{i}-\theta_{0,i})^{2} / \varepsilon^{2}
> ( B / \varepsilon^{2} )^{ 1 / (2\alpha_{0}+1)} \mid d , \alpha \bigg{)}.
\end{align*}
The first term on the right hand side of the above equality vanishes, because
the identity
\begin{align*}
	\mathrm{S} \bigg{(}\sum_{i>\bar{D}}
 (\theta_{i}-\theta_{0,i})^{2} / \varepsilon^{2}
 > ( B / \varepsilon )^{2/(2\alpha_{0}+1)} \mid d, \alpha\bigg{)}
=0,
\end{align*}
holds for $d\leq \bar{D}$
since
$\sum_{i>\bar{D} }\theta_{0,i}^{2} / \varepsilon^{2}
\leq (1/ c_{1}^{2\alpha_{0}}) ( B / \varepsilon)^{2/(2\alpha_{0}+1)}
< (B / \varepsilon )^{2/(2\alpha_{0}+1)}.$
The second term is bounded by
$\exp\{\eta - \eta c_{1}(B/\varepsilon)^{2/(2\alpha_{0}+1)}\}.$
This completes the proof.
\end{proof}

\begin{proof}[Proof of Lemma \ref{lem:priormasscondition}]
The proof relies on the following lemma.
Let $\{N_{i}\}_{i=1}^{\infty}$ be independent random series from the standard Gaussian distribution.
\begin{lem}\label{lem:smallballestimate}
	For each $\alpha>0$,
	there exists a positive constant $c_{3}$ depending only on $\alpha$
	such that,
	for a sufficiently large $d\in\mathbb{N}$
	and
	for each $(v_{1},\ldots,v_{d})\in \mathbb{R}^{d}$,
	the inequality
	\begin{align*}
		\mathrm{Pr} \bigg{(}\sum_{i=1}^{d}(i^{-\alpha-1/2}N_{i}-v_{i})^{2}\leq d^{-2\alpha} \bigg{)}
		\geq 
		\exp\bigg{\{}- \sum_{i=1}^{d}i^{2\alpha +1}v_{i}^{2} / 2 - c_{3} d \bigg{\}}
	\end{align*}
	holds.
\end{lem}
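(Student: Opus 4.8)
The plan is to prove Lemma~\ref{lem:smallballestimate} by reducing the small-ball probability for the shifted Gaussian series to the unshifted one via a change of measure, and then to lower-bound the unshifted small-ball probability directly. Write $Z_i := i^{-\alpha-1/2} N_i$, so that $(Z_1,\dots,Z_d)$ is a centered Gaussian vector with independent coordinates and $\mathrm{Var}(Z_i) = i^{-2\alpha-1}$. We want a lower bound on $\mathrm{Pr}(\sum_{i=1}^d (Z_i - v_i)^2 \le d^{-2\alpha})$.

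First I would apply the Cameron--Martin / Gaussian shift formula. The density of $(Z_1,\dots,Z_d)$ at a point $z$ is proportional to $\exp\{-\tfrac12 \sum_i i^{2\alpha+1} z_i^2\}$. Shifting by $v = (v_1,\dots,v_d)$, the likelihood ratio between the law of $Z+v$ and the law of $Z$ evaluated at $z$ is $\exp\{\sum_i i^{2\alpha+1} v_i z_i - \tfrac12 \sum_i i^{2\alpha+1} v_i^2\}$. Hence
\begin{align*}
\mathrm{Pr}\Big(\sum_i (Z_i - v_i)^2 \le d^{-2\alpha}\Big)
&= \mathrm{E}\Big[\mathbf{1}_{\|Z\|^2 \le d^{-2\alpha}}\,\exp\Big\{-\sum_i i^{2\alpha+1} v_i Z_i - \tfrac12 \sum_i i^{2\alpha+1} v_i^2\Big\}\Big].
\end{align*}
On the event $\{\sum_i Z_i^2 \le d^{-2\alpha}\}$ one can control the linear term: by Cauchy--Schwarz, $|\sum_i i^{2\alpha+1} v_i Z_i| \le (\sum_i i^{2\alpha+1} v_i^2)^{1/2}(\sum_i i^{2\alpha+1} Z_i^2)^{1/2}$, but this is not obviously small because of the weights $i^{2\alpha+1}$ multiplying $Z_i^2$. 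A cleaner route is to symmetrize: since $Z$ and $-Z$ have the same law, the expectation above equals $\mathrm{E}[\mathbf{1}_{\|Z\|^2 \le d^{-2\alpha}}\cosh(\sum_i i^{2\alpha+1} v_i Z_i)]\,\exp\{-\tfrac12\sum_i i^{2\alpha+1}v_i^2\} \ge \mathrm{Pr}(\|Z\|^2 \le d^{-2\alpha})\,\exp\{-\tfrac12 \sum_i i^{2\alpha+1} v_i^2\}$, using $\cosh \ge 1$. This immediately yields the desired $\exp\{-\tfrac12\sum_i i^{2\alpha+1} v_i^2\}$ factor, so it remains only to show $\mathrm{Pr}(\sum_{i=1}^d Z_i^2 \le d^{-2\alpha}) \ge \exp\{-c_3 d\}$ for large $d$.

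For the unshifted small-ball bound, the plan is an elementary product estimate. Since the $Z_i$ are independent, $\mathrm{Pr}(\sum_i Z_i^2 \le d^{-2\alpha}) \ge \prod_{i=1}^d \mathrm{Pr}(Z_i^2 \le d^{-2\alpha-1})$ (allocating mass $d^{-2\alpha-1}$ to each of the $d$ coordinates, so that the sum is at most $d^{-2\alpha}$). Now $Z_i^2 \le d^{-2\alpha-1}$ is equivalent to $N_i^2 \le i^{2\alpha+1} d^{-2\alpha-1} \le 1$ for $i \le d$, so each factor is $\mathrm{Pr}(N_i^2 \le i^{2\alpha+1} d^{-2\alpha-1}) \ge \mathrm{Pr}(N_1^2 \le (i/d)^{2\alpha+1})$. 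Using $\mathrm{Pr}(|N_1| \le t) \ge \tfrac{2}{\sqrt{2\pi}} t e^{-t^2/2} \ge \tfrac{2}{\sqrt{2\pi}\sqrt{e}} t$ for $t \le 1$, we get $\mathrm{Pr}(N_1^2 \le (i/d)^{2\alpha+1}) \ge c_0 (i/d)^{\alpha+1/2}$ for a universal $c_0>0$. Taking logs, $\log \mathrm{Pr}(\sum_i Z_i^2 \le d^{-2\alpha}) \ge \sum_{i=1}^d [\log c_0 + (\alpha+\tfrac12)\log(i/d)] = d\log c_0 + (\alpha+\tfrac12)\log(d!/d^d)$, and by Stirling $\log(d!/d^d) = -d + O(\log d)$, so the whole sum is $\ge -c_3 d$ for $d$ large enough, with $c_3$ depending only on $\alpha$. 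Combining with the symmetrization step completes the proof.

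The main obstacle I anticipate is making sure the symmetrization step is airtight when $v$ is large: the factor $\cosh(\cdot) \ge 1$ argument is what makes the whole thing uniform over all $v \in \mathbb{R}^d$, and it is worth double-checking that the event $\{\|Z\|^2 \le d^{-2\alpha}\}$ is symmetric under $Z \mapsto -Z$ (it is, being a function of $Z_i^2$) so that the $\cosh$ really appears after averaging the two sign choices. Everything else — the Cameron--Martin density computation, the product bound, and the Stirling estimate — is routine. One should also state explicitly how large $d$ must be (namely, large enough for the $O(\log d)$ Stirling remainder and the $\log c_0$ term to be absorbed into $c_3 d$), which is where the ``sufficiently large $d$'' hypothesis in the lemma is used.
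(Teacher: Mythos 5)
Your proof is correct. The first half coincides exactly with the paper's argument: the paper also symmetrizes over the sign of $N$, writes the shifted small-ball probability as $\int_{\sum_i i^{-2\alpha-1}x_i^2\le d^{-2\alpha}}\cosh\bigl(\sum_i i^{\alpha+1/2}x_iv_i\bigr)\exp\{-\sum_i x_i^2/2-\sum_i i^{2\alpha+1}v_i^2/2\}(2\pi)^{-d/2}\,\mathrm{d}x$, and drops the $\cosh$ via $\cosh\ge 1$; your Cameron--Martin phrasing is the same identity after a change of variables, and your worry about the symmetry of the event is indeed the only thing to check there. Where you genuinely depart from the paper is in the centered small-ball bound $\mathrm{Pr}(\sum_{i=1}^d i^{-2\alpha-1}N_i^2\le d^{-2\alpha})\ge e^{-c_3d}$: the paper rescales coordinates to reduce to the volume of the Euclidean ball of radius $d^{-\alpha}$, picking up a Jacobian $\{\Gamma(d+1)\}^{\alpha+1/2}$ and a factor $\pi^{d/2}d^{-d\alpha}/\Gamma(d/2+1)$, and then balances the two Gamma functions with Stirling's formula; you instead allocate budget $d^{-2\alpha-1}$ to each coordinate, use independence to get a product of one-dimensional probabilities $\mathrm{Pr}(|N_1|\le (i/d)^{\alpha+1/2})\ge c_0(i/d)^{\alpha+1/2}$, and apply Stirling only to $\log(d!/d^d)\ge -d$. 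Your route is more elementary (no volume formula, no Gamma-function asymptotics, only the trivial bound $d!\ge(d/e)^d$), and as a bonus it works for every $d\ge 1$ rather than only for sufficiently large $d$; the paper's route generalizes more readily to situations where the constraint region is a genuine ellipsoid that cannot be handled coordinate-wise. Both yield a constant $c_3$ depending only on $\alpha$, which is all the application in Lemma~\ref{lem:priormasscondition} requires.
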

The proof of the lemma is provided in Appendix \ref{Appendix:smallballestimate} for the sake of completeness.

Return to the proof of Lemma \ref{lem:priormasscondition}.
Let $T= \lfloor (B/\varepsilon)^{2/(2\alpha_{0}+1)} \rfloor$.
From Lemma \ref{lem:smallballestimate} with $d=T$, $v_{i}= \theta_{0,i}/ (\varepsilon T^{\alpha+1/2})$, $i=1,\ldots,d$,
\begin{align}
\mathrm{S} ( \|\theta-\theta_{0}\|^{2} / \varepsilon^{2} \leq 2T \mid  \alpha , d=\lfloor T\rfloor )
&\geq
\mathrm{Pr} \bigg{(} \sum_{i=1}^{T}(\varepsilon T^{\alpha+1/2} i^{-\alpha-1/2}N_{i}- \theta_{0,i})^2 / \varepsilon^{2}
\leq T \bigg{)}
\nonumber\\
&\geq \exp\bigg{\{}-\sum_{i=1}^{T}i^{2\alpha+1}\theta_{0,i}^{2}/(\varepsilon^{2}T^{2\alpha+1}) - c_{3} T \bigg{\}}
\nonumber\\
&\geq \exp\{- (c_{3}+1)T\}.
\label{eq:lowerbound_S_component}
\end{align}
Here,
the first inequality follows because
$\sum_{i=T+1}^{\infty}\theta_{0,i}^{2}/\varepsilon^{2} \leq T$.
The second inequality follows from Lemma \ref{lem:smallballestimate}.
The last inequality follows because
\[\sum_{i=1}^{T}i^{2\alpha+1}\theta_{0,i}^{2} \big{/} \varepsilon^{2} 
= \sum_{i=1}^{T}i^{2(\alpha-\alpha_{0})+1} i^{2\alpha_{0}}\theta_{0,i}^{2} \big{/} \varepsilon^{2} \leq T^{2\alpha+2},\]
where we use the condition that $2(\alpha-\alpha_{0})+1>0$.

Using the inequality (\ref{eq:lowerbound_S_component}) and the inequality $1/(1-\mathrm{e}^{-\eta}) \geq \mathrm{e}^{-\eta}$ for $\eta>0$
yields
\begin{align*}
\mathrm{S}_{M}( \|\theta-\theta_{0}\|^{2} / \varepsilon^{2} \leq 2T, \alpha)
&\geq M(T)\mathrm{S}(\|\theta-\theta_{0}\|^{2} / \varepsilon^{2} \leq 2T \mid d=  T, \alpha)
\nonumber\\
&\geq \exp\{-(c_{3}+2+\eta )T\}.
\end{align*}
This completes the proof.
\end{proof}

\begin{proof}[Proof of Lemma \ref{lem:highprobabilityset}]
By Jensen's inequality,
\begin{align*}
	P_{\theta_{0},\varepsilon^{2}}[H(\theta_{0})]
	&=P_{\theta_{0},\varepsilon^{2}}\bigg{[}\log \int \frac{\mathrm{d}P_{\theta,\varepsilon^{2}} }{\mathrm{d}P_{\theta_{0},\varepsilon^{2}}} \mathrm{d}\widetilde{\mathrm{S}}_{M}(\theta\mid\alpha)
\geq -2(B/\varepsilon)^{2/(2\alpha_{0}+1)} \bigg{]}
	\nonumber\\
	&\geq
	P_{\theta_{0},\varepsilon^{2}}\bigg{[} \int \log \frac{\mathrm{d}P_{\theta,\varepsilon^{2}}}{ \mathrm{d}P_{\theta_{0},\varepsilon^{2}}} \mathrm{d}\widetilde{\mathrm{S}}_{M}(\theta\mid\alpha)
	\geq -2(B/\varepsilon)^{2/(2\alpha_{0}+1)} \bigg{]}.
\end{align*}
Since $\widetilde{\mathrm{S}}_{M}(\cdot\mid\alpha)$ has a support on 
$\{\theta:\|\theta-\theta_{0}\|^{2}/\varepsilon^{2} \leq 2(B/\varepsilon)^{2/(2\alpha_{0}+1)}\}$,
we have
\begin{align*}
	\int \log \frac{\mathrm{d}P_{\theta,\varepsilon^{2}}}{\mathrm{d}P_{\theta_{0},\varepsilon^{2}}}
	\mathrm{d}\widetilde{\mathrm{S}}_{M}(\theta\mid\alpha)
	&=
	\int \bigg{\{} \frac{\langle (X-\theta_{0}),(\theta-\theta_{0})\rangle}{\varepsilon^{2}}-\frac{||\theta-\theta_{0}||^{2}}{2\varepsilon^{2}} \bigg{\}}
	\mathrm{d}\widetilde{\mathrm{S}}_{M}(\theta \mid \alpha)
	\nonumber\\
	&\geq
	\int \bigg{\{} \frac{\langle (X-\theta_{0}), (\theta-\theta_{0}) \rangle }{\varepsilon^{2}} \bigg{\}}
	\mathrm{d}\widetilde{\mathrm{S}}_{M}(\theta \mid \alpha)
	- (B/\varepsilon)^{2/(2\alpha_{0}+1)},
\end{align*}
where $\langle \cdot,\cdot \rangle$ is the $l_{2}$-inner product. 
Thus, letting $N$ be a one-dimensional standard normal random variable yields
\begin{align*}
	P_{\theta_{0},\varepsilon^{2}}[H^{\mathrm{c}}(\theta_{0})]
	&\leq
	P_{\theta_{0},\varepsilon^{2}}\bigg{[}\int \frac{\langle (X-\theta_{0}) , (\theta-\theta_{0}) \rangle}{\varepsilon^{2}} \mathrm{d}\widetilde{\mathrm{S}}_{M}(\theta \mid \alpha)
	< -(B/\varepsilon)^{2/(2\alpha_{0}+1)} \bigg{]}
	\nonumber\\
	&=
	\mathrm{Pr}\bigg{[}\int \sqrt{\left(\frac{||\theta_{0}-\theta ||^{2}}{\varepsilon^{2}}\right)}N \mathrm{d}\widetilde{\mathrm{S}}_{M}(\theta \mid \alpha)
	>(B/\varepsilon)^{2/(2\alpha_{0}+1)} \bigg{]}
	\nonumber\\
	&\leq
	\mathrm{Pr}[\sqrt{(B/\varepsilon)^{2/(2\alpha_{0}+1)}} N >(B/\varepsilon)^{2/(2\alpha_{0}+1)}]
	\nonumber\\
	&\leq \exp\{-(1/2)(B/\varepsilon)^{2/(2\alpha_{0}+1)}\}.
\end{align*}
Here,
for the last inequality,
we use the inequality $\mathrm{Pr}(|N|>r)\leq \exp(-r^{2}/2), r>0$.
\end{proof}

\begin{proof}[Proof of Lemma \ref{lem:AdaptiveEssentialsupport}]
	This follows immediately
	from the identity $\Pi=\sum_{k=1}^{\infty}F(k) \allowbreak\mathrm{S}_{M}(\cdot\mid \alpha=k)$
	and from Lemma \ref{lem:Essentialsupport}.
\end{proof}

\begin{proof}[Proof of Lemma \ref{lem:Adaptivepriormasscondition}]
	Take an integer $\overline{k} \geq \alpha_{0}-1/2$
	and take $T=\lfloor (B/\varepsilon)^{2/(2\alpha_{0}+1)} \rfloor$.
	Then,
	\begin{align*}
	\Pi(\|\theta-\theta_{0}\|^2 / \varepsilon^2 \leq 2T)
	&\geq F(\overline{k})\mathrm{S}_{M}( \|\theta-\theta_{0}\|^2 / \varepsilon^2 \leq 2T \mid \alpha= \overline{k} ) \\
	&\geq F(\overline{k})M(T)\mathrm{S}( \|\theta-\theta_{0}\|^2 / \varepsilon^2 \leq 2T \mid \alpha=\overline{k} , d=T)\\
	&\geq \exp\{ -1 -\gamma \overline{k} - (c_{3}+2+\eta) T \},
	\end{align*}
	where $c_{3}$ is a positive constant 
	depending only on $\overline{k}$ and 
	appearing in Lemma \ref{lem:smallballestimate}
	and the last inequality follows from the proof of Lemma \ref{lem:priormasscondition}.
	This completes the proof.
\end{proof}

\section{Proof of Lemma \ref{lem:smallballestimate}}
\label{Appendix:smallballestimate}

In this appendix, 
for the sake of completeness,
we provide the proof of 
an important inequality for estimating the small ball probability
(Lemma \ref{lem:smallballestimate}).

\begin{proof}
	Since the distributions of $N_{1}$ and $-N_{1}$ are identical,
	we have
	\begin{align*}
		\mathrm{Pr}& \bigg{(}\sum_{i=1}^{d}(i^{-\alpha-1/2}N_{i}-v_{i})^2 \leq d^{-2\alpha} \bigg{)}
		\\
		=&\mathrm{Pr} \bigg{(}\sum_{i=1}^{d}(i^{-\alpha-1/2}N_{i}-v_{i})^2 \leq d^{-2\alpha} \bigg{)}/2
		+\mathrm{Pr} \bigg{(}\sum_{i=1}^{d}(i^{-\alpha-1/2}N_{i}+v_{i})^2 \leq d^{-2\alpha} \bigg{)}/2
		\\
		=&\int_{\sum_{i=1}^{d}i^{-2\alpha-1}x_{i}^2 \leq d^{-2\alpha} }
		\mathrm{cosh}\bigg{(}\sum_{i=1}^{d}i^{\alpha+1/2}x_{i}v_{i}\bigg{)}
		\frac{\exp\{-\sum_{i=1}^{d}x_{i}^{2}/2-\sum_{i=1}^{d}i^{2\alpha+1}v_{i}^{2}/2 \} }{(2\pi)^{d/2}}\mathrm{d}x
		\\
		\geq &\exp\bigg{\{}-\sum_{i=1}^{d}i^{2\alpha+1}v_{i}^{2}/2 \bigg{\}}
		\int_{\sum_{i=1}^{d}i^{-2\alpha-1}x_{i}^2 \leq d^{-2\alpha} } \frac{ \exp\{-\sum_{i=1}^{d}x_{i}^2 /2 \} }{(2\pi)^{d/2}}\mathrm{d}x.
	\end{align*}

	Second, 
	we show that 
	there exists a positive constant $c_{3}$ depending only on $\alpha$ such that
	for $d\in\mathbb{N}$,
	the inequality
	\begin{align*}
	\int_{\sum_{i=1}^{d}i^{-2\alpha-1}x_{i}^2 \leq d^{-2\alpha} } \frac{ \exp\{-\sum_{i=1}^{d}x_{i}^2 /2 \} }{(2\pi)^{d/2}}\mathrm{d}x
	\geq \exp\{-c_{3}d\}
	\end{align*}
	holds.
	Changing variables yields
	\begin{align*}
		&\int_{\sum_{i=1}^{d}i^{-2\alpha-1}x_{i}^2 \leq d^{-2\alpha} } \frac{ \exp\{-\sum_{i=1}^{d}x_{i}^2 /2 \} }{ (2\pi)^{d/2} } \mathrm{d}x
		\\
		&\geq\{\Gamma(d+1)\}^{\alpha+1/2}\int_{\sum_{i=1}^{d}y_{i}^{2}\leq d^{-2\alpha} }
		\frac{\exp\{-d^{2\alpha+1}\sum_{i=1}^{d}y_{i}^{2}/2 \}}{(2\pi)^{d/2}}\mathrm{d}y
		\\
		&\geq \{\Gamma(d+1)\}^{\alpha+1/2}\frac{\exp(-d/2)}{(2\pi)^{d/2}}\int_{\sum_{i=1}^{d}y_{i}^{2}\leq d^{-2\alpha} }\mathrm{d}y.
	\end{align*}
	Since $\int_{\sum_{i=1}^{d}y_{i}^2 \leq d^{-2\alpha} }\mathrm{d}y = d^{-d\alpha} \pi^{d/2} / \Gamma(d/2+1) $,
	we have,
	for some universal constant $\tilde{c_{1}}$,
	\begin{align*}
&\int_{\sum_{i=1}^{d}i^{-2\alpha-1}x_{i}^2 \leq d^{-2\alpha} } \frac{ \exp\{-\sum_{i=1}^{d}x_{i}^2 /2 \} }{(2\pi)^{d/2}}\mathrm{d}x
\\
&\geq [ \{\Gamma(d+1)\}^{\alpha+1/2} / \{d^{d\alpha}\Gamma(d/2+1) \} ]\exp(-\tilde{c}_{1}d).
	\end{align*}
	Here,
	it follows from Stirling's formula that
	there exist positive constants $\tilde{c}_{2}$ and $\tilde{c}_{3}$
	depending only on $\alpha$ such that for $d\in\mathbb{N}$ the inequalities
	\begin{align*}
		&\{\Gamma(d+1)\}^{\alpha+1/2}\geq \exp\{(\alpha+1/2)(d+1/2)\log d - \tilde{c}_{2}d \}, \\
	        &d^{d\alpha}\Gamma(d/2+1)\leq \exp\{(\alpha+1/2) d\log d + \tilde{c}_{3}d\}
	\end{align*}
	hold,
	and thus we obtain
	\begin{align*}
		\int_{\sum_{i=1}^{d}i^{-2\alpha-1}x_{i}^{2} \leq d^{-2\alpha} }
		\frac{\exp\{ - \sum_{i=1}^{d}x_{i}^{2} / 2\}}{(2\pi)^{d/2}}\mathrm{d}x
		\geq
		\exp\{-(\tilde{c}_{1}+\tilde{c}_{2}+\tilde{c}_{3})d\}.
	\end{align*}
\end{proof}

\section{The explicit form of the posterior}
\label{Appendix: explicit form}
In this appendix, we provide an explicit form of the posterior of $\Pi$.
The explicit form of the posterior is useful when conducting numerical experiments.
The posterior of $\Pi$ is given by
\begin{align*}
	\Pi(\cdot\mid x) = \sum_{k=1}^{\infty}F( k \mid x) \sum_{d=1}^{\infty} M(d\mid x,k) \mathrm{S}(\cdot\mid x, d, k),
\end{align*}
where $\mathrm{S}(\cdot\mid x,d,k)$ is given by
\begin{align*}
	\bigg{[}\mathop{\otimes}_{i=1}^{d}\mathcal{N}\bigg{(}\bigg{(}1-\frac{1}{(d/i)^{2k+1}+1}\bigg{)}x_{i},
		\varepsilon^{2}\bigg{(}1-\frac{1}{(d/i)^{2k+1}+1}\bigg{)}\bigg{)}\bigg{]}
	\otimes
	\bigg{[}\mathop{\otimes}_{i=d+1}^{\infty}\mathcal{N}(0,0)\bigg{]},
\end{align*}
$M(\cdot\mid x,k)$ is given by
\begin{align*}
	M(d\mid x,k)\propto M(d)\prod_{i=1}^{d}\bigg{\{}1+\bigg{(}\frac{d}{i}\bigg{)}^{2k+1}\bigg{\}}^{-1/2}
	\exp\bigg{(} \sum_{i=1}^{d}\frac{x^{2}_{i}}
	{2\varepsilon^{2}}\frac{(d/i)^{2k+1}}{(d/i)^{2k+1}+1} \bigg{)},
\end{align*}
$F(\cdot\mid x)$ is given by
\begin{align*}
	F(k\mid x)\propto F(k)\sum_{d=1}^{\infty}M(d)\prod_{i=1}^{d}\bigg{\{} 1+ \bigg{(}\frac{d}{i}\bigg{)}^{2k+1} \bigg{\}}^{-1/2}
	\exp\bigg{(} \sum_{i=1}^{d}\frac{x^{2}_{i}}
	{2\varepsilon^{2}}\frac{(d/i)^{2k+1}}{(d/i)^{2k+1}+1}\bigg{)}.
\end{align*}
Here we omit the normalizing constant.

The derivation of the posterior form is as follows.
Letting $\mu^{(d)}$ be the product of the $d$-dimensional Lebesgue measure and $\otimes_{d+1}^{\infty}\mathcal{N}(0,0)$ together with the Bayes theorem
yields
\begin{align}
	\frac{\mathrm{d}\mathrm{S}(\cdot\mid x,d,k)}{\mathrm{d}\mu^{(d)}}
	&\propto\frac{\mathrm{d}\mathrm{S}(\cdot\mid d,k)}{\mathrm{d}\mu^{(d)}}(\theta^{(d)}) 
	\frac{\mathrm{d}P_{\theta,\varepsilon^{2}}}{\mathrm{d}P_{0,\varepsilon^{2}}}(x) \nonumber\\
	&\propto \exp\bigg{\{}-\sum_{i=1}^{d}\frac{\theta^{2}_{i}}{2\varepsilon^{2}(d/i)^{2k+1}}-\sum_{i=1}^{d}\frac{(x_{i}-\theta_{i})^{2} }{2\varepsilon^{2}} \bigg{\}}\nonumber\\
	&\propto \exp\bigg{\{}-\sum_{i=1}^{d}\frac{1}{2\varepsilon^{2}}\bigg{(}\frac{1}{(d/i)^{2k+1}}+1\bigg{)}
\bigg{\{}\theta_{i}-\frac{x_{i}}{1+1/(d/i)^{2k+1}}\bigg{\}}^{2} \bigg{\}}.
\end{align}
Thus,
we obtain the explicit form of $\mathrm{S}(\cdot\mid x, d,k)$.
Since the marginal distribution $P_{\mathrm{S}(\cdot\mid d,k)}$ of $x$ 
with respect to $\mathrm{S}(\cdot\mid d,k)$ is
$[\otimes_{i=1}^{d}\mathcal{N}(0,\varepsilon^{2}(1+(d/i)^{2k+1}))]
\otimes[\otimes_{d+1}^{\infty}\mathcal{N}(0,\varepsilon^{2})]$,
we obtain
\begin{align*}
	M(d\mid x,k)
	&\propto M(d)\frac{\mathrm{d}P_{\mathrm{S}(\cdot\mid d,k) }}{\mathrm{d}P_{0,\varepsilon^{2}}}(x) \nonumber\\
 &\propto M(d)\prod_{i=1}^{d} (1+(d/i)^{2k+1})^{-1/2}
	\exp\bigg{\{}-\frac{x^{2}_{i}}{2\varepsilon^{2}(1+(d/i)^{2k+1})}+\frac{x^{2}_{i}}{2\varepsilon^{2}}\bigg{\}}.
\end{align*}
A similar calculation yields the explicit form of $F(k\mid x)$.

\section{Supplementary numerical experiments}
\label{Appendix:secondary experiments}

In this appendix, we provide several numerical experiments aimed at assisting the readers' understanding.
The experimental setting is almost the same as that of Section \ref{sec:numerical}:
Recall that
numerical experiments are conducted with $p=100$-dimensional settings,
and that the noise variance $\varepsilon^2$ is fixed to one.
In addition to the estimators and the parameter values in Section \ref{sec:numerical},
we use the following estimators and parameter values:
\begin{itemize}
	\item the maximum likelihood estimator $(X_{1},\ldots,X_{p})$ ;
	\item the blockwise James--Stein estimator of which the truncation dimension is $p$;
	\item the Bayes estimator based on the Gaussian scale mixture prior distribution
              \[\int \otimes_{i=1}^{\infty}\mathcal{N}(0,ti^{-5}) \mathrm{d} V(t)\]
              with the discretized inverse Gamma distribution $V$ of which the rate and shape parameters are both one.
\end{itemize}
For $i=1,2,\ldots,$
\begin{itemize}
	\item $\hat{\theta}^{(3)}_{i}:=Bi^{-0.65}/\sqrt{4}$;
	\item $\hat{\theta}^{(4)}_{i}:=Bi^{-3}/\sqrt{\pi^{4}/90}$.
\end{itemize}

\subsection{Comparison between estimators with and without non-asymptotic adaptation}

We compare the performance between 
estimators with and without non-asymptotic adaptation
using white noise representation.
We represented
the true parameter $\theta$ as $t\in[0,1]\to\sum_{i=1}^{p}\theta_{i}\phi_{i}(t)$,
the observation $x$ as $t\in[0,1]\to\sum_{i=1}^{p}x_{i}\phi_{i}(t)$,
and an estimator $\hat{\theta}$ as $t\in[0,1]\to\sum_{i=1}^{p}\hat{\theta}_{i}(x)\phi_{i}(t)$,
where 
$\{\phi_{i}(\cdot)\}_{i=1}^{\infty}$ is the trigonometric series.
In Figures \ref{Nonscaleratio_1}--\ref{Scaleratio_2},
these are plotted at $\{0.001\times i\}_{i=1}^{1000}$.

\begin{figure}[h]
	\begin{minipage}{0.4\hsize}
	\begin{center}
	\includegraphics[width=0.90\hsize]{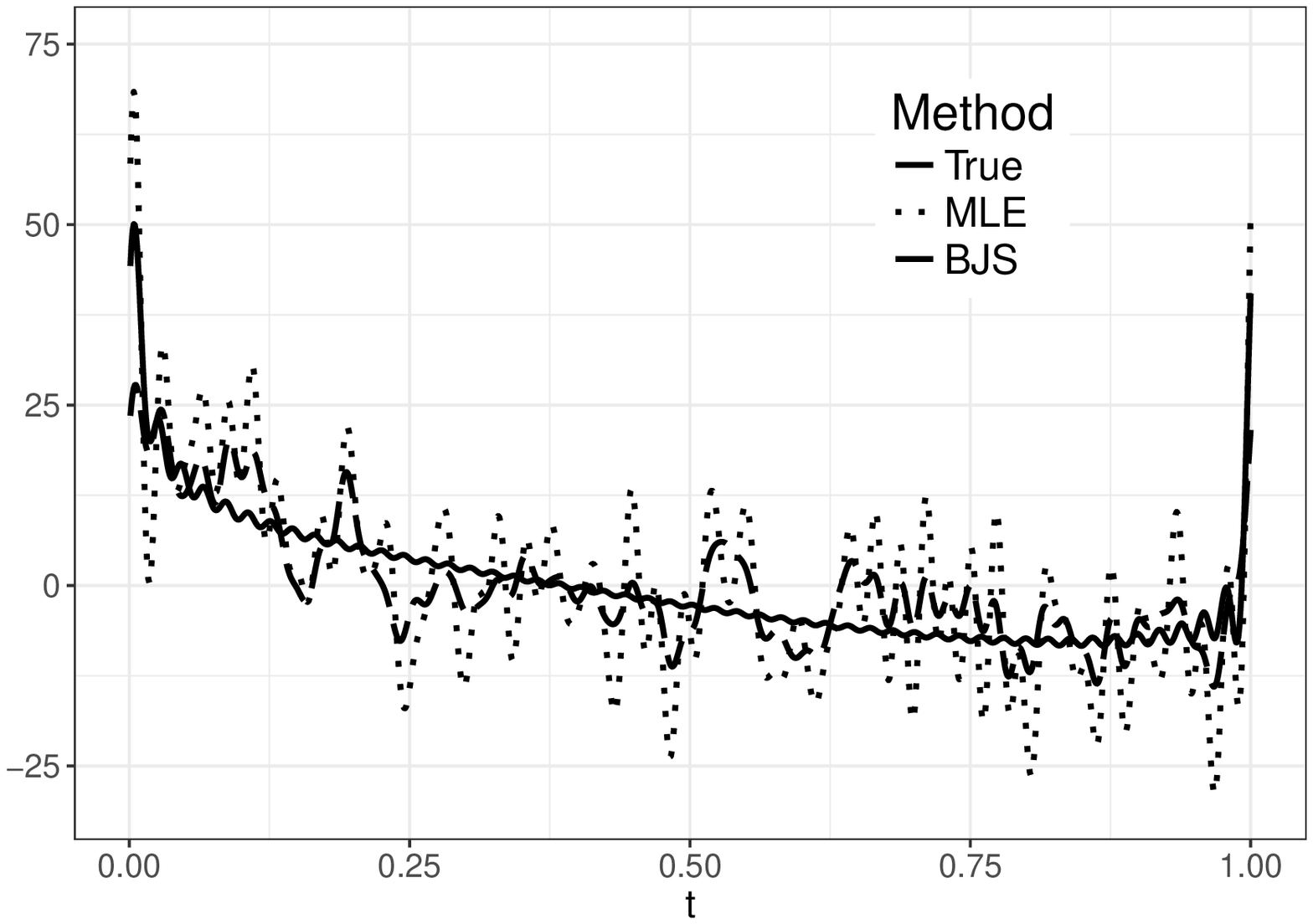}
	\caption{White noise representation of the true parameter and estimators without non-asymptotic adaptation at $\theta=\theta^{(3)}$ and $B=10$.}
	\label{Nonscaleratio_1}
	\end{center}
	\end{minipage}
	\begin{minipage}{0.4\hsize}
	\begin{center}
	\includegraphics[width=0.90\hsize]{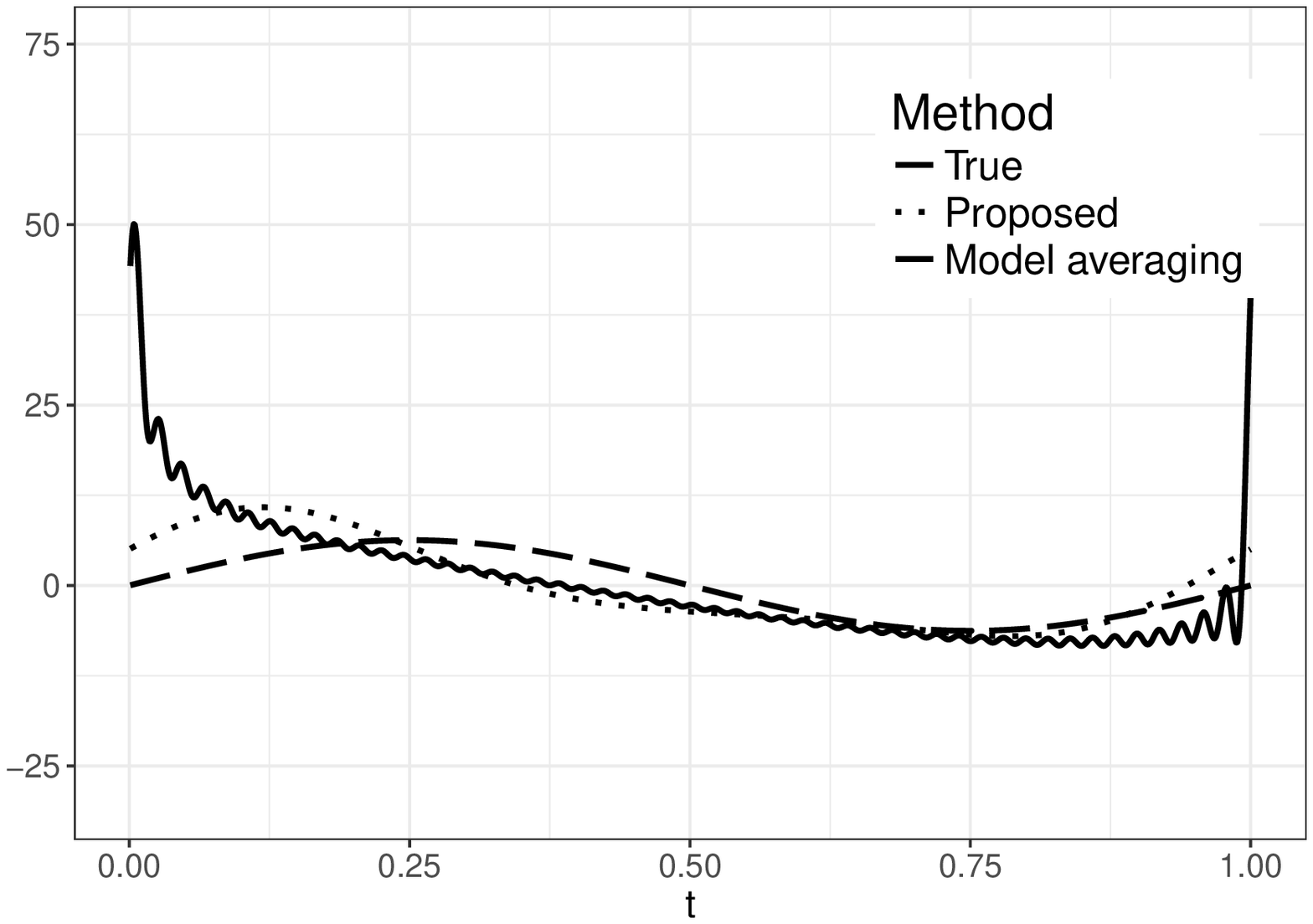}
	\caption{White noise representation of the true parameter and estimators with non-asymptotic adaptation at $\theta=\theta^{(3)}$ and $B=10$.}
	\label{Scaleratio_1}
	\end{center}
	\end{minipage}
\end{figure}

\begin{figure}[h]
	\begin{minipage}{0.4\hsize}
	\begin{center}
		\includegraphics[width=0.90\hsize]{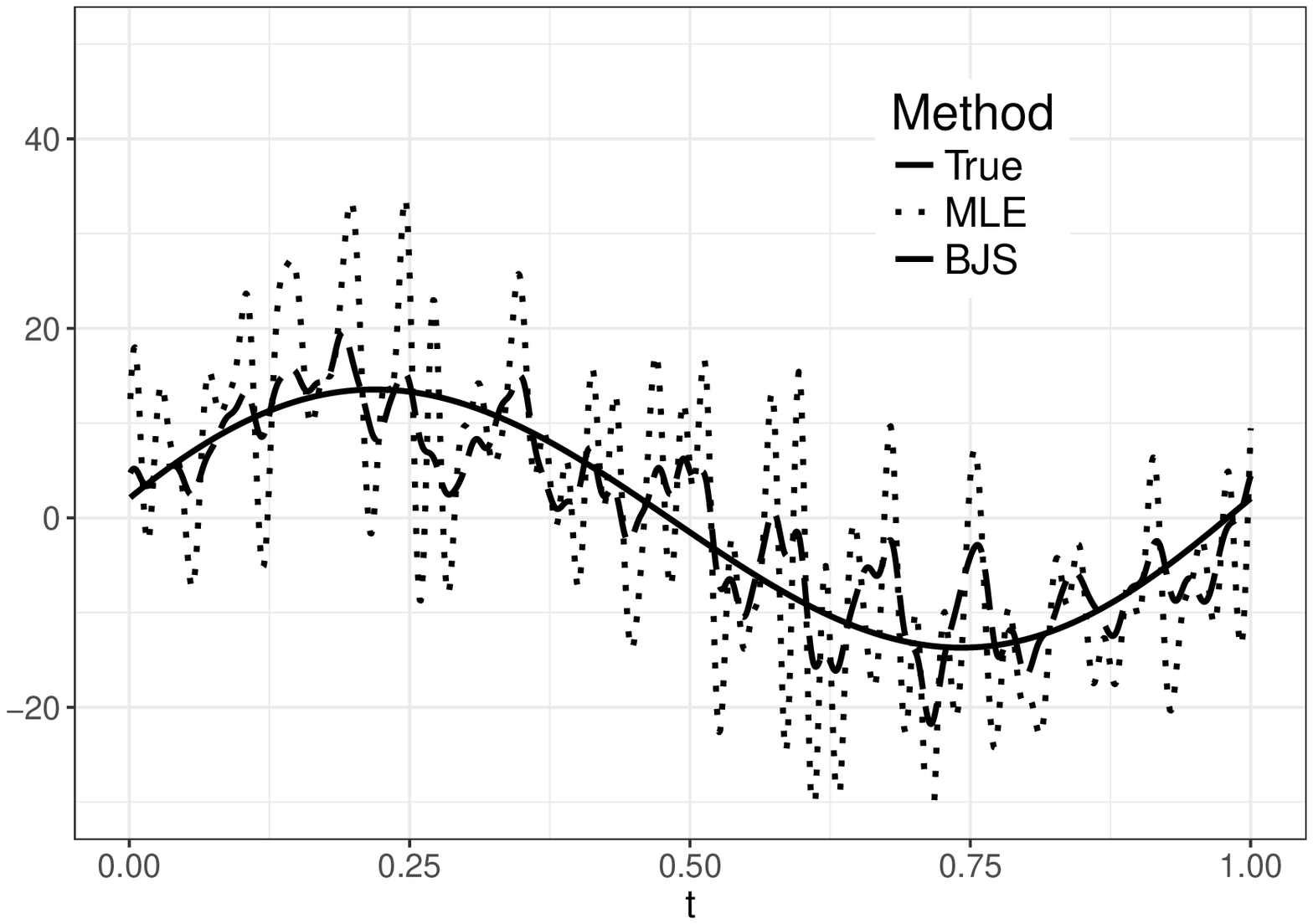}
	\caption{White noise representation of the true parameter and estimators without non-asymptotic adaptation  at $\theta=\theta^{(4)}$ and $B=10$.}
	\label{Nonscaleratio_2}
	\end{center}
	\end{minipage}
	\begin{minipage}{0.4\hsize}
	\begin{center}
		\includegraphics[width=0.90\hsize]{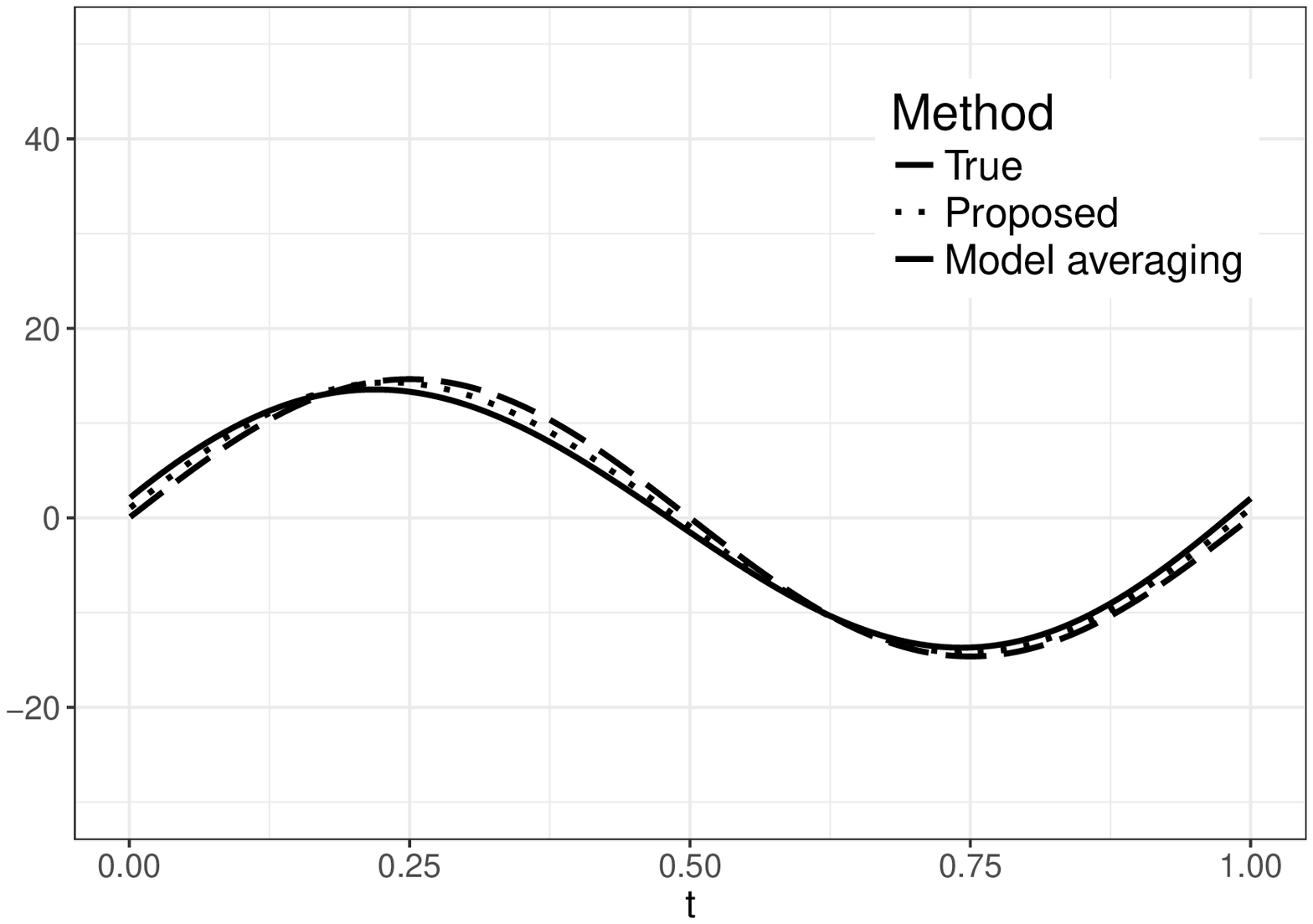}
	\caption{White noise representation of the true signal and estimators without non-asymptotic adaptation  at $\theta=\theta^{(4)}$ and $B=10$.$\quad$}
	\label{Scaleratio_2}
	\end{center}
	\end{minipage}
\end{figure}

Figure \ref{Nonscaleratio_1} shows the true parameter (abbreviated by ``True"),
the maximum likelihood estimator (abbreviated by ``MLE"),
and the blockwise James--Stein estimator (abbreviated by ``BJS")
at $\theta=\theta^{(3)}$ and $B=10$.
Figure \ref{Scaleratio_1} shows the true parameter,
the proposed Bayes estimator (abbreviated by ``Proposed"),
and the model averaging estimator (abbreviated by ``Model averaging")
at $\theta=\theta^{(3)}$ and $B=10$.
Figures \ref{Nonscaleratio_2} and \ref{Scaleratio_2} shows these at $\theta=\theta^{(4)}$ and $B=10$.

Figures \ref{Nonscaleratio_1} and \ref{Nonscaleratio_2}
indicate that
estimators without non-asymptotic adaptation do not work as smoothers when $B$ is relatively large.
Figures \ref{Scaleratio_1} and \ref{Scaleratio_2}
indicate that
estimators with non-asymptotic adaptation detect the true parameter.

\subsection{Comparison with the Gaussian scale mixture prior distribution}

We compare the performance of the proposed Bayes estimator
with that of the Bayes estimator based on the Gaussian scale mixture prior distribution.
The comparison is intended to indicate that the Bayes estimator based on the Gaussian scale mixture prior 
would also be non-asymptotically adaptive
as conjectured in Remark \ref{rem: Another possibility} in Section \ref{sec:nonasymptoticBayesianadaptation}.

\begin{figure}[h]
	\begin{minipage}{0.4\hsize}
	\begin{center}
		\includegraphics[width=0.90\hsize]{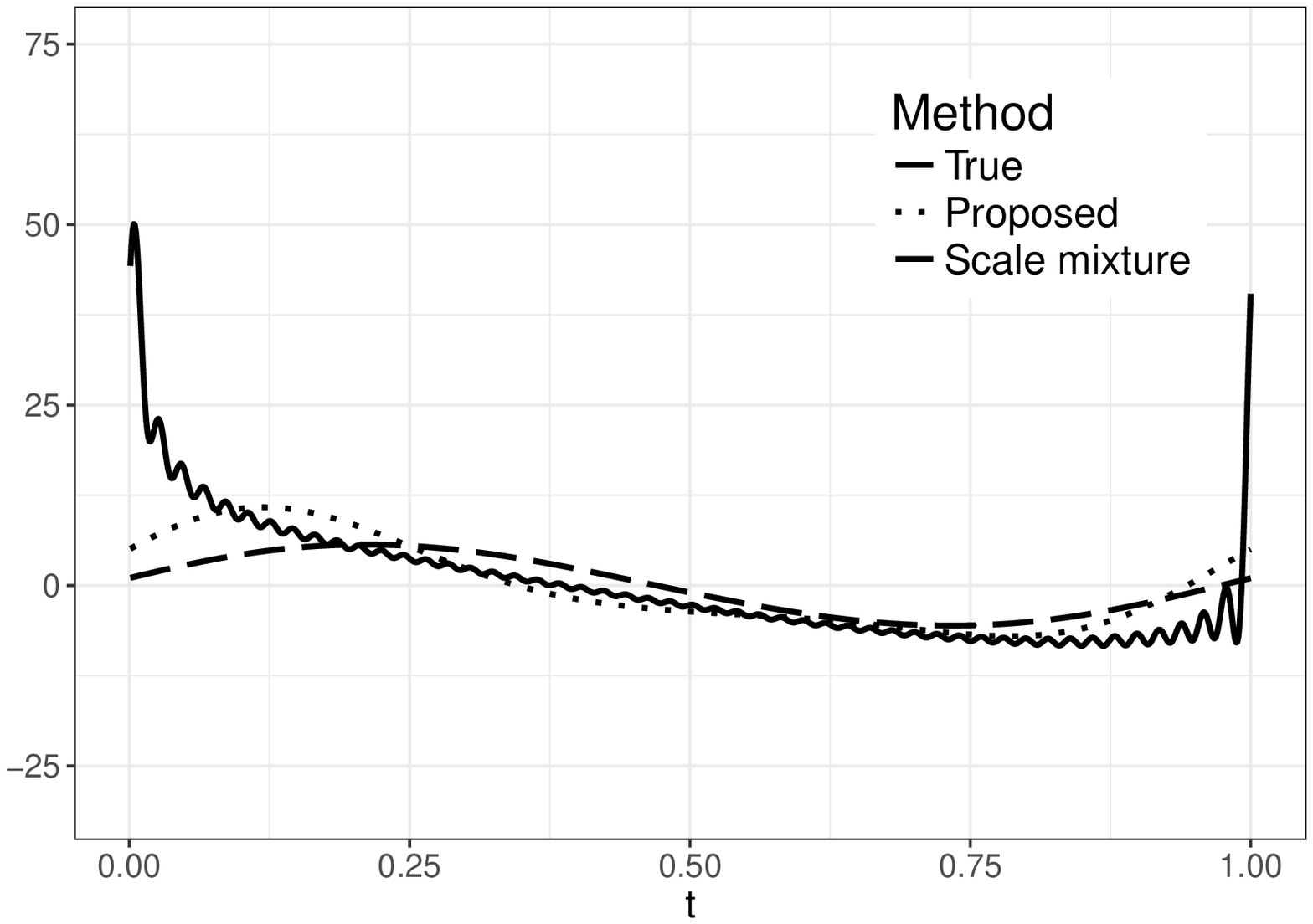}
	\caption{White noise representation of true parameter, the proposed estimator, and the Bayes estimator based on the Gaussian scale mixture prior at $\theta=\theta^{(3)}$ and $B=10$.}
	\label{Scale_1}
	\end{center}
	\end{minipage}
	\begin{minipage}{0.4\hsize}
	\begin{center}
		\includegraphics[width=0.90\hsize]{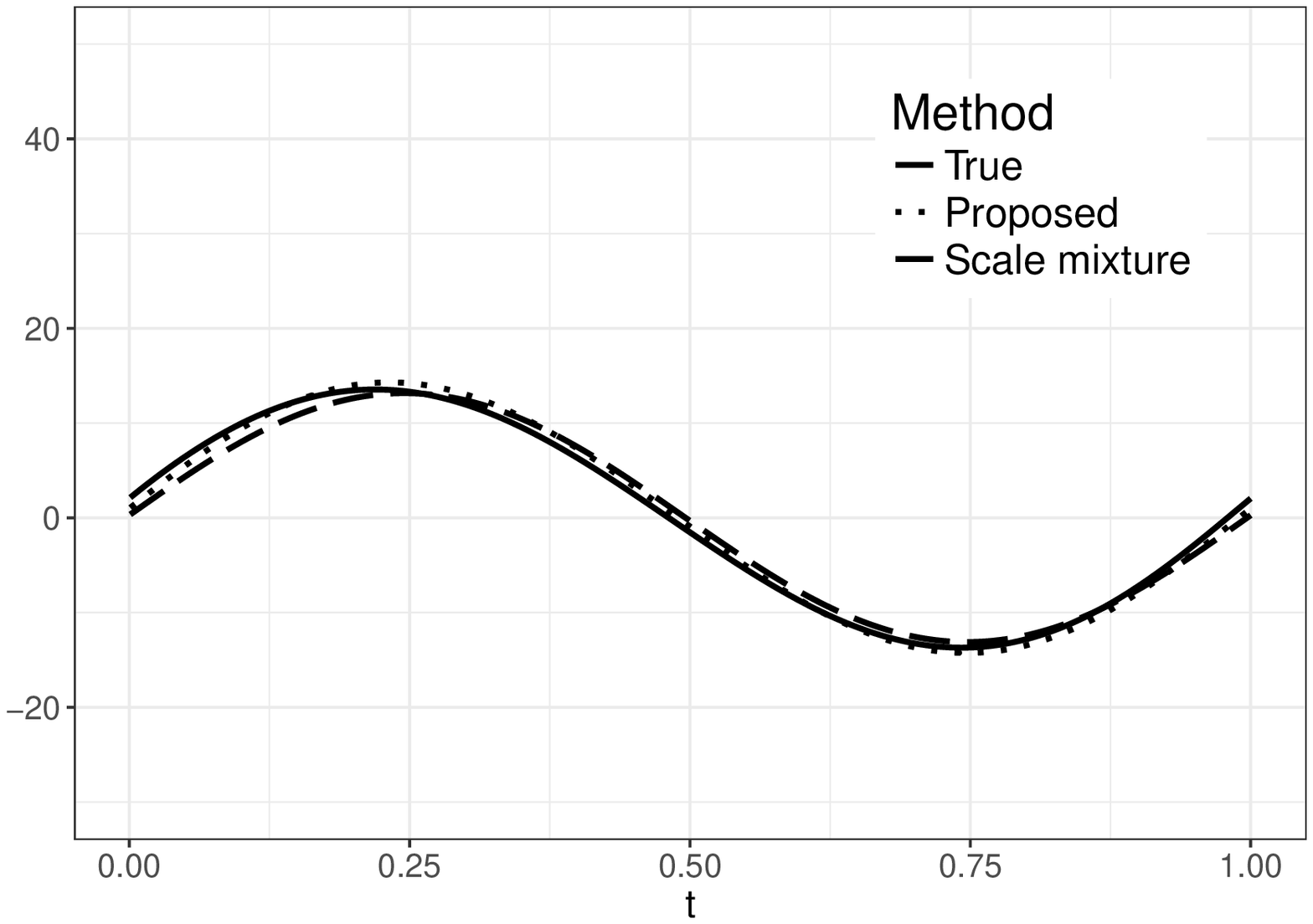}
	\caption{White noise representation of true parameter, the proposed estimator, and the Bayes estimator based on the Gaussian scale mixture prior at $\theta=\theta^{(4)}$ and $B=10$.}
	\label{Scale_2}
	\end{center}
	\end{minipage}
\end{figure}

\begin{figure}[h]
	\begin{minipage}{0.4\hsize}
	\begin{center}
		\includegraphics[width=0.90\hsize]{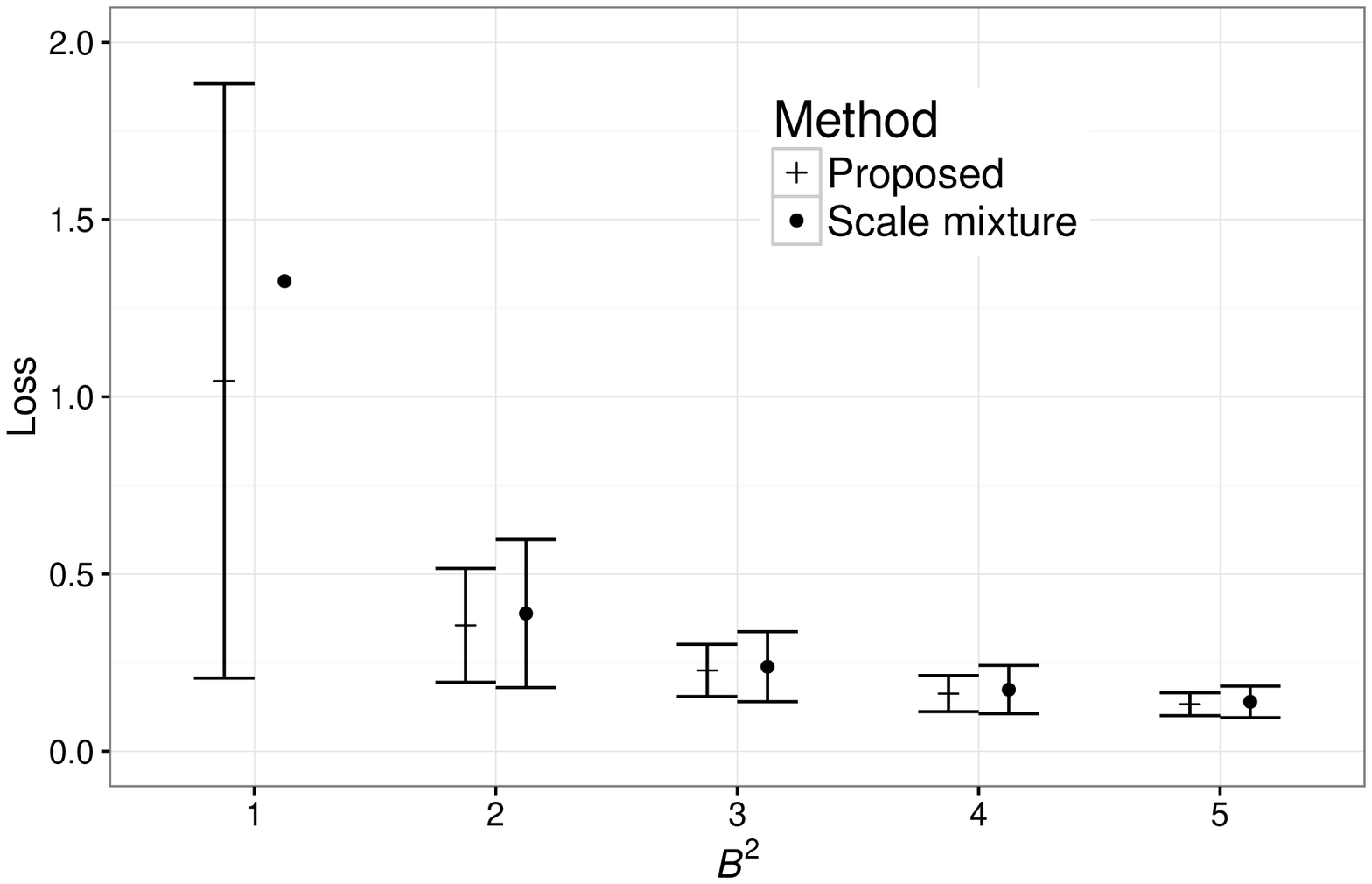}
	\caption{Means of losses with error bars at $\theta=\theta^{(3)}$ in cases with $B^2=1,2,3,4,5$.
	The error bars of the scale mixture estimator at $B^{2}=1$ are omitted because
        the upper bar is outside the range $[0,2]$.
	}
	\label{Loss_Scale_3}
	\end{center}
	\end{minipage}
	\begin{minipage}{0.4\hsize}
	\begin{center}
		\includegraphics[width=0.90\hsize]{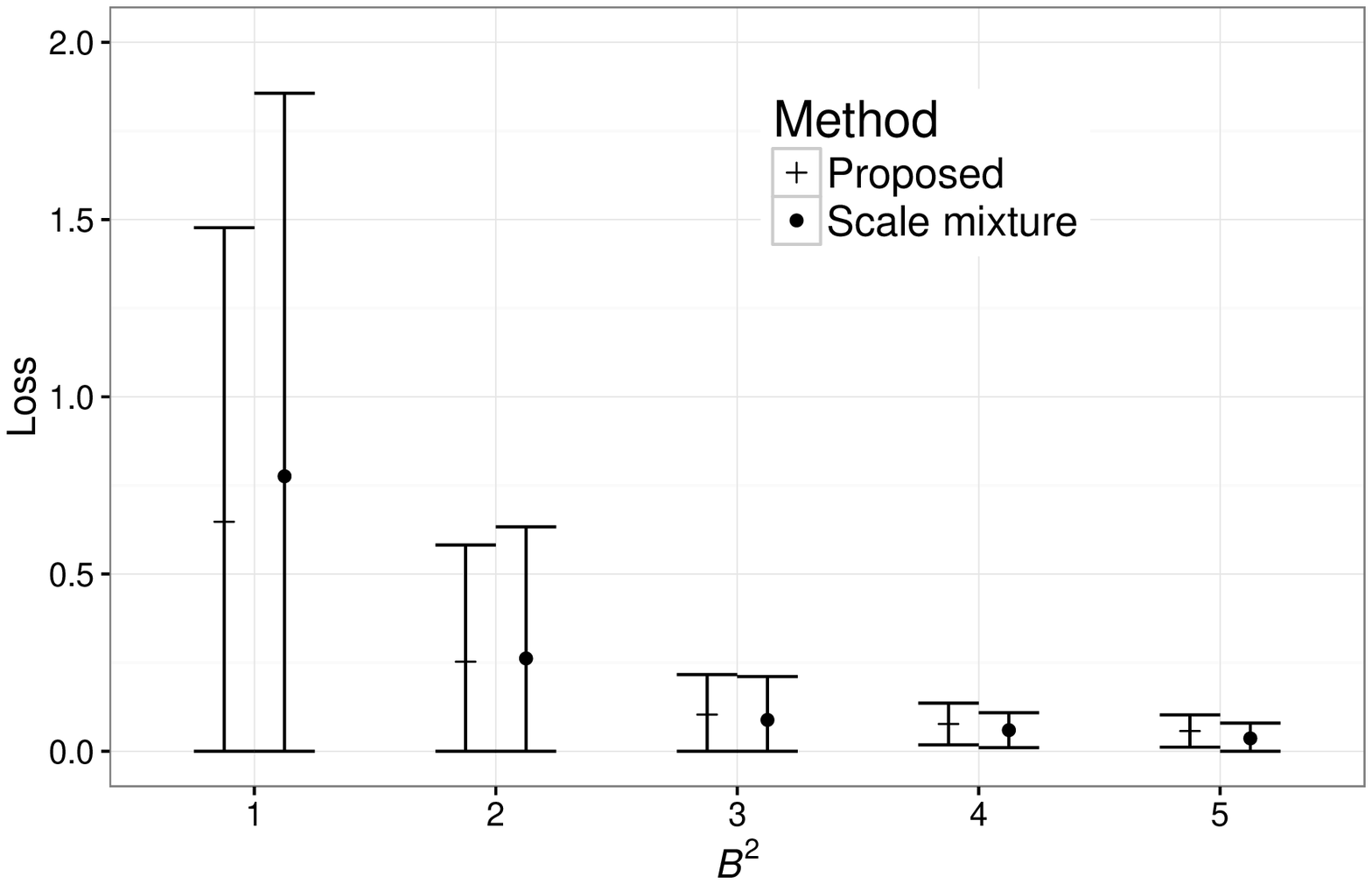}
	\caption{Means of losses with error bars at $\theta=\theta^{(4)}$ in cases with $B^2=1,2,3,4,5$.}
	\label{Loss_Scale_4}
	\end{center}
	\end{minipage}
\end{figure}

Figures \ref{Scale_1} and \ref{Scale_2} are comparisons using the white noise representation.
Figures \ref{Loss_Scale_3} and \ref{Loss_Scale_4} are comparisons using the values of losses at $\theta = \theta^{(3)},\theta^{(4)}$.
The proposed Bayes estimator is abbreviated by ``Proposed," and
the Bayes estimator based on the Gaussian scale mixture prior distribution is abbreviated by ``Scale mixture."

Figures \ref{Scale_1}--\ref{Loss_Scale_4} indicate that 
the performance of the Bayes estimator based on the Gaussian scale mixture prior distribution
is comparable to that of the proposed estimator.

\bibliographystyle{imsart-number}
\bibliography{ScaleRatioMinimax}

\begin{thebibliography}{45}

\bibitem{Akaike(1973)}
\begin{binproceedings}[author]
\bauthor{\bsnm{Akaike},~\bfnm{H.}\binits{H.}}
(\byear{1973}).
\btitle{Information theory and extension of the maximum likelihood principle}.
In \bbooktitle{Proc. 2nd Int. Symp. Info. Theory}
\bpages{\,267--281}.
\end{binproceedings}
\endbibitem

\bibitem{Arbeletal(2013)}
\begin{barticle}[author]
\bauthor{\bsnm{Arbel},~\bfnm{J.}\binits{J.}},
  \bauthor{\bsnm{Gayraud},~\bfnm{G.}\binits{G.}} \AND
  \bauthor{\bsnm{Rousseau},~\bfnm{J.}\binits{J.}}
(\byear{2013}).
\btitle{Bayesian optimal adaptive estimation using a sieve prior}.
\bjournal{Scand. J. Statist.}
\bvolume{40}
\bpages{\,549--570}.
\end{barticle}
\endbibitem

\bibitem{Baraud(2000)}
\begin{barticle}[author]
\bauthor{\bsnm{Baraud},~\bfnm{Y.}\binits{Y.}}
(\byear{2000}).
\btitle{Model selection for regression on a fixed design}.
\bjournal{Probab. Theory Relat. Fields}
\bvolume{117}
\bpages{\,467--493}.
\end{barticle}
\endbibitem

\bibitem{BarronBirgeMassart(1999)}
\begin{barticle}[author]
\bauthor{\bsnm{Barron},~\bfnm{A.}\binits{A.}},
  \bauthor{\bsnm{Birg\'{e}},~\bfnm{L.}\binits{L.}} \AND
  \bauthor{\bsnm{Massart},~\bfnm{P.}\binits{P.}}
(\byear{1999}).
\btitle{Risk bounds for model selection via penalization}.
\bjournal{Probab. Theory Relat. Fields}
\bvolume{113}
\bpages{\,301--413}.
\end{barticle}
\endbibitem

\bibitem{BarronSchervishWasserman(1999)}
\begin{barticle}[author]
\bauthor{\bsnm{Barron},~\bfnm{A.}\binits{A.}},
  \bauthor{\bsnm{Schervish},~\bfnm{M.}\binits{M.}} \AND
  \bauthor{\bsnm{Wasserman},~\bfnm{L.}\binits{L.}}
(\byear{1999}).
\btitle{The consistency of posterior distributions in nonparametric problems}.
\bjournal{Ann. Statist.}
\bvolume{27}
\bpages{\,536--561}.
\end{barticle}
\endbibitem

\bibitem{BelitserandGhosal(2003)}
\begin{barticle}[author]
\bauthor{\bsnm{Belitser},~\bfnm{E.}\binits{E.}} \AND
  \bauthor{\bsnm{Ghosal},~\bfnm{S.}\binits{S.}}
(\byear{2003}).
\btitle{Adaptive {B}ayesian Inference of the mean of an inifinite-dimensional
  normal distribution}.
\bjournal{Ann. Statist.}
\bvolume{31}
\bpages{\,536--559}.
\end{barticle}
\endbibitem

\bibitem{BirgeandMassart(1997)}
\begin{binproceedings}[author]
\bauthor{\bsnm{Birg\'{e}},~\bfnm{L.}\binits{L.}} \AND
  \bauthor{\bsnm{Massart},~\bfnm{P.}\binits{P.}}
(\byear{1997}).
\btitle{From model selection to adaptive estimation}.
In \bbooktitle{Festschrift for Lucien Le Cam: Research Papers in Probability
  and Statistics}
\bpages{\,55--87}.
\end{binproceedings}
\endbibitem

\bibitem{BirgeandMassart(2001)}
\begin{barticle}[author]
\bauthor{\bsnm{Birg\'{e}},~\bfnm{L.}\binits{L.}} \AND
  \bauthor{\bsnm{Massart},~\bfnm{P.}\binits{P.}}
(\byear{2001}).
\btitle{Gaussian model selection}.
\bjournal{J. Eur. Math. Soc.}
\bvolume{3}
\bpages{\,203--268}.
\end{barticle}
\endbibitem

\bibitem{BrownandLow(1996)}
\begin{barticle}[author]
\bauthor{\bsnm{Brown},~\bfnm{L.}\binits{L.}} \AND
  \bauthor{\bsnm{Low},~\bfnm{M.}\binits{M.}}
(\byear{1996}).
\btitle{Asymptotic equivalence of nonparametric regression and white noise}.
\bjournal{Ann. Statist.}
\bvolume{24}
\bpages{\,2384--2398}.
\end{barticle}
\endbibitem

\bibitem{Caietal(2000)}
\begin{btechreport}[author]
\bauthor{\bsnm{Cai},~\bfnm{T.}\binits{T.}},
  \bauthor{\bsnm{Low},~\bfnm{M.}\binits{M.}} \AND
  \bauthor{\bsnm{Zhao},~\bfnm{L.}\binits{L.}}
(\byear{2000}).
\btitle{Sharp adaptive estimation by a blockwise method.}
\btype{Technical Report},
\bpublisher{Wharton School, University of Pennsylvania, Philadelphia}.
\end{btechreport}
\endbibitem

\bibitem{CavalierandTsybakov(2001)}
\begin{barticle}[author]
\bauthor{\bsnm{Cavalier},~\bfnm{L.}\binits{L.}} \AND
  \bauthor{\bsnm{Tsybakov},~\bfnm{A.}\binits{A.}}
(\byear{2001}).
\btitle{Penalized blockwise Stein's method, monotone oracles and sharp adaptive
  estimation}.
\bjournal{Math. Methods of Statist.}
\bvolume{10}
\bpages{\,247--282}.
\end{barticle}
\endbibitem

\bibitem{DalalyanandSalmon(2012)}
\begin{barticle}[author]
\bauthor{\bsnm{Dalalyan},~\bfnm{A.}\binits{A.}} \AND
  \bauthor{\bsnm{Salmon},~\bfnm{J.}\binits{J.}}
(\byear{2012}).
\btitle{Sharp oracle inequalities for aggregation of affine estimators}.
\bjournal{Ann. Statist.}
\bvolume{40}
\bpages{\,2327--2355}.
\end{barticle}
\endbibitem

\bibitem{Efromovich(1999)}
\begin{bbook}[author]
\bauthor{\bsnm{Efromovich},~\bfnm{S.}\binits{S.}}
(\byear{1999}).
\btitle{Nonparametric Curve Estimation}.
\bpublisher{Springer}.
\end{bbook}
\endbibitem

\bibitem{EfromovichandPinsker(1984)}
\begin{barticle}[author]
\bauthor{\bsnm{Efromovich},~\bfnm{S.}\binits{S.}} \AND
  \bauthor{\bsnm{Pinsker},~\bfnm{M.}\binits{M.}}
(\byear{1984}).
\btitle{Learning algorithm for nonparmetric filtering}.
\bjournal{Automation and Remote Control}
\bvolume{11}
\bpages{\,1434--1440}.
\end{barticle}
\endbibitem

\bibitem{Freedman(1999)}
\begin{barticle}[author]
\bauthor{\bsnm{Freedman},~\bfnm{D.}\binits{D.}}
(\byear{1999}).
\btitle{On the {B}ernstein--von {M}ises theorem with infinite-dimensional
  parameters}.
\bjournal{Ann. Statist.}
\bvolume{27}
\bpages{\,1119--1140}.
\end{barticle}
\endbibitem

\bibitem{GaoandZhou(2016)}
\begin{barticle}[author]
\bauthor{\bsnm{Gao},~\bfnm{C.}\binits{C.}} \AND
  \bauthor{\bsnm{Zhou},~\bfnm{H.}\binits{H.}}
(\byear{2016}).
\btitle{Rate exact {B}ayesian adaptation with modified block priors}.
\bjournal{Ann. Statist.}
\bvolume{44}
\bpages{\,318--345}.
\end{barticle}
\endbibitem

\bibitem{Ghosal_Ghosh_vanderVaart(2000)}
\begin{barticle}[author]
\bauthor{\bsnm{Ghosal},~\bfnm{S.}\binits{S.}},
  \bauthor{\bsnm{Ghosh},~\bfnm{J.}\binits{J.}} \AND
  \bauthor{\bparticle{van~der} \bsnm{Vaart},~\bfnm{A.}\binits{A.}}
(\byear{2000}).
\btitle{Convergence rate of posterior distributions}.
\bjournal{Ann. Statist.}
\bvolume{28}
\bpages{\,500--531}.
\end{barticle}
\endbibitem

\bibitem{GhosalLembervanderVaart(2008)}
\begin{barticle}[author]
\bauthor{\bsnm{Ghosal},~\bfnm{S.}\binits{S.}},
  \bauthor{\bsnm{Lember},~\bfnm{J.}\binits{J.}} \AND
  \bauthor{\bparticle{van~der} \bsnm{Vaart},~\bfnm{A.}\binits{A.}}
(\byear{2008}).
\btitle{Nonparametric {B}ayesian model selection and averaging}.
\bjournal{Elec. J. Statist.}
\bvolume{2}
\bpages{\,63--89}.
\end{barticle}
\endbibitem

\bibitem{GhosalandvanderVaart(2007)}
\begin{barticle}[author]
\bauthor{\bsnm{Ghosal},~\bfnm{S.}\binits{S.}} \AND \bauthor{\bparticle{van~der}
  \bsnm{Vaart},~\bfnm{A.}\binits{A.}}
(\byear{2007}).
\btitle{Convergence rates of posterior distributions for noniid observations}.
\bjournal{Ann. Statist.}
\bvolume{35}
\bpages{\,192--223}.
\end{barticle}
\endbibitem

\bibitem{GineandNickl(2016)}
\begin{bbook}[author]
\bauthor{\bsnm{Gin\'e},~\bfnm{E.}\binits{E.}} \AND
  \bauthor{\bsnm{Nickl},~\bfnm{R.}\binits{R.}}
(\byear{2016}).
\btitle{Mathematical foundations of infinite-dimensional statistical models}.
\bpublisher{Cambridge University Press}.
\end{bbook}
\endbibitem

\bibitem{Hartigan(2002)}
\begin{btechreport}[author]
\bauthor{\bsnm{Hartigan},~\bfnm{J.}\binits{J.}}
(\byear{2002}).
\btitle{Bayesian Regression Using Akaike Priors}
\btype{Technical Report},
\bpublisher{New Haven, CT, Yale University}.
\end{btechreport}
\endbibitem

\bibitem{HoffmannRousseauSchmidt-Hieber(2015)}
\begin{barticle}[author]
\bauthor{\bsnm{Hoffmann},~\bfnm{M.}\binits{M.}},
  \bauthor{\bsnm{Rousseau},~\bfnm{J.}\binits{J.}} \AND
  \bauthor{\bsnm{Schmidt-Hieber},~\bfnm{J.}\binits{J.}}
(\byear{2015}).
\btitle{On Adaptive posterior concentration rates}.
\bjournal{Ann. Statist.}
\bvolume{43}
\bpages{\,2259--2295}.
\end{barticle}
\endbibitem

\bibitem{Huang(2004)}
\begin{barticle}[author]
\bauthor{\bsnm{Huang},~\bfnm{T.}\binits{T.}}
(\byear{2004}).
\btitle{Convergence rates for posterior distributions and adaptive estimation}.
\bjournal{Ann. Statist.}
\bvolume{32}
\bpages{\,1556--1593}.
\end{barticle}
\endbibitem

\bibitem{Johannes_Simoni_Schenk(2016)}
\begin{binproceedings}[author]
\bauthor{\bsnm{Johannes},~\bfnm{J.}\binits{J.}},
  \bauthor{\bsnm{Schenk},~\bfnm{R.}\binits{R.}} \AND
  \bauthor{\bsnm{Simoni},~\bfnm{A.}\binits{A.}}
(\byear{2014}).
\btitle{Adaptive {B}ayesian estimation in {G}aussian sequence space models}.
In \bbooktitle{Contributions in infinite-dimensional statistics and related
  topics}
\bpages{\,167--172}.
\end{binproceedings}
\endbibitem

\bibitem{KnapikSzabovanderVaartvanZanten(2016)}
\begin{barticle}[author]
\bauthor{\bsnm{Knapik},~\bfnm{B.}\binits{B.}},
  \bauthor{\bsnm{Szab\'o},~\bfnm{B.}\binits{B.}}, \bauthor{\bparticle{van~der}
  \bsnm{Vaart},~\bfnm{A.}\binits{A.}} \AND \bauthor{\bparticle{van}
  \bsnm{Zanten},~\bfnm{H.}\binits{H.}}
(\byear{2016}).
\btitle{Bayes procedures for adaptive inference in inverse problems for the
  white noise model}.
\bjournal{Probab. Theory Relat. Fields}
\bvolume{164}
\bpages{\,771--813}.
\end{barticle}
\endbibitem

\bibitem{KnapikvanderVaartvanZanten(2011)}
\begin{barticle}[author]
\bauthor{\bsnm{Knapik},~\bfnm{B.}\binits{B.}}, \bauthor{\bparticle{van~der}
  \bsnm{Vaart},~\bfnm{A.}\binits{A.}} \AND \bauthor{\bparticle{van}
  \bsnm{Zanten},~\bfnm{H.}\binits{H.}}
(\byear{2011}).
\btitle{Bayesian inverse problems with {G}aussian priors}.
\bjournal{Ann. Statist.}
\bvolume{39}
\bpages{\,2626--2657}.
\end{barticle}
\endbibitem

\bibitem{LeungandBarron(2006)}
\begin{barticle}[author]
\bauthor{\bsnm{Leung},~\bfnm{G.}\binits{G.}} \AND
  \bauthor{\bsnm{Barron},~\bfnm{A.}\binits{A.}}
(\byear{2006}).
\btitle{Information Theory and Mixing Least-Squares Regressions}.
\bjournal{IEEE tran. on INFOR. THEORY}
\bvolume{52}
\bpages{\,3396--3410}.
\end{barticle}
\endbibitem

\bibitem{Mallows(1973)}
\begin{barticle}[author]
\bauthor{\bsnm{Mallows},~\bfnm{C.}\binits{C.}}
(\byear{1973}).
\btitle{Some comments on $C_{p}$}.
\bjournal{Technometrics}
\bvolume{15}
\bpages{\,661--675}.
\end{barticle}
\endbibitem

\bibitem{Massart(2007)}
\begin{bbook}[author]
\bauthor{\bsnm{Massart},~\bfnm{P.}\binits{P.}}
(\byear{2007}).
\btitle{Concentration Inequalities and Model Selection: Ecole d'Et\'{e} de
  Probabilit\'{e}s de Saint-Flour XXXIII-2003}.
\bpublisher{Springer}.
\end{bbook}
\endbibitem

\bibitem{PetroneRousseauScricciolo(2014)}
\begin{barticle}[author]
\bauthor{\bsnm{Petrone},~\bfnm{S.}\binits{S.}},
  \bauthor{\bsnm{Rousseau},~\bfnm{J.}\binits{J.}} \AND
  \bauthor{\bsnm{Scricciolo},~\bfnm{C.}\binits{C.}}
(\byear{2014}).
\btitle{{B}ayes and empirical {B}ayes: do they merge?}
\bjournal{Biometrika}
\bvolume{101}
\bpages{\,285--302}.
\end{barticle}
\endbibitem

\bibitem{Pinsker(1980)}
\begin{barticle}[author]
\bauthor{\bsnm{Pinsker},~\bfnm{M.}\binits{M.}}
(\byear{1980}).
\btitle{Optimal filtering of square integrable signals in {G}aussian white
  noise}.
\bjournal{Problems Inform. Transmission}
\bvolume{16}
\bpages{\,120--133}.
\end{barticle}
\endbibitem

\bibitem{RasmussenandWilliams(2005)}
\begin{bbook}[author]
\bauthor{\bsnm{Rasmussen},~\bfnm{C.}\binits{C.}} \AND
  \bauthor{\bsnm{Williams},~\bfnm{K.}\binits{K.}}
(\byear{2005}).
\btitle{Gaussian Processes for Machine Learning}.
\bpublisher{the MIT Press}.
\end{bbook}
\endbibitem

\bibitem{Ray(2013)}
\begin{barticle}[author]
\bauthor{\bsnm{Ray},~\bfnm{K.}\binits{K.}}
(\byear{2013}).
\btitle{Bayesian inverse problems with non-conjugate priors}.
\bjournal{Elec. J. Statist.}
\bvolume{7}
\bpages{\,2516--2549}.
\end{barticle}
\endbibitem

\bibitem{RousseauandSzabo(2017)}
\begin{barticle}[author]
\bauthor{\bsnm{Rousseau},~\bfnm{J.}\binits{J.}} \AND
  \bauthor{\bsnm{Szab\'o},~\bfnm{B.}\binits{B.}}
(\byear{2017}).
\btitle{Asymptotic behaviour of the empirical {B}ayes posterior associated to
  maximum marginal likelihood estimator}.
\bjournal{Ann. Statist.}
\bvolume{45}
\bpages{\,833--865}.
\end{barticle}
\endbibitem

\bibitem{Scricciolo(2006)}
\begin{barticle}[author]
\bauthor{\bsnm{Scricciolo},~\bfnm{C.}\binits{C.}}
(\byear{2006}).
\btitle{Convergence rates for {B}ayesian density estimation of
  infinite-dimensional exponential families}.
\bjournal{Ann. Statist.}
\bvolume{34}
\bpages{\,2897--2920}.
\end{barticle}
\endbibitem

\bibitem{ShenandGhosal(2015)}
\begin{barticle}[author]
\bauthor{\bsnm{Shen},~\bfnm{W.}\binits{W.}} \AND
  \bauthor{\bsnm{Ghosal},~\bfnm{S.}\binits{S.}}
(\byear{2015}).
\btitle{Adaptive Bayesian Procedures Using Random Series Priors}.
\bjournal{Scand. J. Statist.}
\bvolume{42}
\bpages{\,1194--1213}.
\end{barticle}
\endbibitem

\bibitem{ShenandWasserman(2001)}
\begin{barticle}[author]
\bauthor{\bsnm{Shen},~\bfnm{X.}\binits{X.}} \AND
  \bauthor{\bsnm{Wasserman},~\bfnm{L.}\binits{L.}}
(\byear{2001}).
\btitle{Rate of Convergence of posterior distributions}.
\bjournal{Ann. Statist.}
\bvolume{29}
\bpages{\,687--714}.
\end{barticle}
\endbibitem

\bibitem{Stein(1973)}
\begin{binproceedings}[author]
\bauthor{\bsnm{Stein},~\bfnm{C.}\binits{C.}}
(\byear{1973}).
\btitle{Estimation of the mean of a multivariate normal distribution}.
In \bbooktitle{Proc. Prague Symp. Asmptotic Statistics}
\bpages{\,345--381}.
\end{binproceedings}
\endbibitem

\bibitem{Suzuki(2012)}
\begin{binproceedings}[author]
\bauthor{\bsnm{Suzuki},~\bfnm{T.}\binits{T.}}
(\byear{2012}).
\btitle{{PAC}-{B}ayesian Bound for {G}aussian Process Regression and Multiple
  Kernel Additive Model}.
In \bbooktitle{25th annual {C}onference {O}n {L}earning {T}heory}
\bpages{\,8.1--8.20}.
\end{binproceedings}
\endbibitem

\bibitem{Szabo_vanderVaart_vanZanten(2013)}
\begin{barticle}[author]
\bauthor{\bsnm{Szab\'o},~\bfnm{B.}\binits{B.}}, \bauthor{\bparticle{van~der}
  \bsnm{Vaart},~\bfnm{A.}\binits{A.}} \AND \bauthor{\bparticle{van}
  \bsnm{Zanten},~\bfnm{H.}\binits{H.}}
(\byear{2013}).
\btitle{Empirical {B}ayes scaling of {G}aussian priors in the white noise
  model}.
\bjournal{Elec. J. Statist.}
\bvolume{7}
\bpages{\,991--1018}.
\end{barticle}
\endbibitem

\bibitem{Tsybakov(2009)}
\begin{bbook}[author]
\bauthor{\bsnm{Tsybakov},~\bfnm{A.}\binits{A.}}
(\byear{2009}).
\btitle{Introduction to Nonparametric Estimation}.
\bpublisher{Springer}.
\end{bbook}
\endbibitem

\bibitem{vanderVaartandvanZanten(2009)}
\begin{barticle}[author]
\bauthor{\bparticle{van~der} \bsnm{Vaart},~\bfnm{A.}\binits{A.}} \AND
  \bauthor{\bparticle{van} \bsnm{Zanten},~\bfnm{H.}\binits{H.}}
(\byear{2009}).
\btitle{Adaptive {B}ayesian Estimation Using A {G}aussian Random Field with
  Inverse {G}amma Bandwidth}.
\bjournal{Ann. Statist.}
\bvolume{37}
\bpages{\,2655--2675}.
\end{barticle}
\endbibitem

\bibitem{Wasserman(2006)}
\begin{bbook}[author]
\bauthor{\bsnm{Wasserman},~\bfnm{L.}\binits{L.}}
(\byear{2006}).
\btitle{All of Nonparametric Statistics}.
\bpublisher{Springer}.
\end{bbook}
\endbibitem

\bibitem{Yang(2005)}
\begin{barticle}[author]
\bauthor{\bsnm{Yang},~\bfnm{Y.}\binits{Y.}}
(\byear{2005}).
\btitle{Can the strengths of {AIC} and {BIC} be shared? {A} conflict between
  model indentification and regression estimation}.
\bjournal{Biometrika}
\bvolume{92}
\bpages{\,937--950}.
\end{barticle}
\endbibitem

\bibitem{Zhao(2000)}
\begin{barticle}[author]
\bauthor{\bsnm{Zhao},~\bfnm{L.}\binits{L.}}
(\byear{2000}).
\btitle{{B}ayesian aspects of some nonparametric problems}.
\bjournal{Ann. Statist.}
\bvolume{28}
\bpages{\,532--552}.
\end{barticle}
\endbibitem

\end{thebibliography}
\end{document}